\newtheoremstyle{mydefinition}% name
  {\medskipamount}%      Space above
  {\medskipamount}%      Space below
  {\normalfont}%         Body font
  {\parindent}%         Indent amount (empty = no indent, \parindent = para indent)
  {\bfseries}% Thm head font
  {.}%        Punctuation after thm head
  { }%     Space after thm head: " " = normal interword space;
\newcommand{\diag}{\operatorname{diag}}
\def\t{\widetilde}
\def\R{{\mathbb R}} \def\Z{{\mathbb Z}}
\newcommand{\presonly}[1]{}
\newcommand{\algor}[1]{}
\newcommand{\invadraw}[1]{}
\newcommand{\talkonly}[1]{}
\newcommand{\talkno}[1]{#1}
\newcommand{\short}[1]{#1}
\renewcommand{\ge}{\geqslant}
\renewcommand{\geq}{\geqslant}
\renewcommand{\le}{\leqslant}
\renewcommand{\emptyset}{\varnothing}
\renewcommand{\Box}{\mathbin{\text{\scalebox{.84}{$\square$}}}}
\DeclareMathOperator{\id}{id}
\theoremstyle{plain}
\newtheorem{theorem}{Теорема}[subsection]
    \newtheorem{lemma}[theorem]{Лемма}
    \newtheorem{proposition}[theorem]{Утверждение}
    \newtheorem{conjecture}[theorem]{Гипотеза}
\theoremstyle{mydefinition}
\newtheorem{pr}[theorem]{Задача}
\newtheorem{remark}[theorem]{Замечание}
\newtheorem{example}[theorem]{Пример}
\begin{document}

% \large

% {\bf Дорогой коллега!
% Пожалуйста, не удаляйте этот текст до моего подтверждения, что статья годится для публикации (только) в текущем виде.

% Перед работой с tex-файлом данной статьи прошу подтвердить письмом мне (на \texttt{askopenkov@gmail.com}), что

% (1) Вы будете отмечать (например, красным в pdf или закомментированными восклицательными знаками в tex-файле) абсолютно все исправления текста, которые Вы делаете. Это не касается верстки, т.е. расположения текста на странице. Эта просьба вызвана огромным количеством ухудшений текста, которые регулярно (хотя и непреднамеренно) делаются при попытках его улучшить.
% За попытки улучшить я Вам благодарен.

% (2) Вы не будете передавать права на редактирование исходников данной статьи другому человеку до тех пор, пока этот человек не пошлет мне письмо с подтверждением, что пункты 1 и 2 прочитал, понял, и будет выполнять.

% Ваш, Аркадий Скопенков (от имени соавторов). }

% \normalsize

% \newpage

\talkonly{\huge}
 
\title{Циклы в графах и в гиперграфах: \\ в направлении теории гомологий}

\author{А. Мирошников, О. Никитенко, А. Скопенков
\talkno{\thanks{\emph{А. Мирошников, А. Скопенков:} Московский физико-технический институт, mirosh.av@mail.ru.
\newline
\emph{О. Никитенко:} Алтайский технический университет, nov142857@gmail.com.
\newline
\emph{А. Скопенков:} Независимый московский университет, \url{https://users.mccme.ru/skopenko/}, skopenko@mccme.ru. 
\newline
При написании этого текста использованы материалы \cite{ADN+} Летней Конференции Турнира Городов и курсов МФТИ.
Мы благодарны Э. Алкину и А. Воропаеву за совместную работу над теми материалами, А. Воропаеву за редактирование решений к \S$2$ той версии, С. Дженжеру за совместную работу над предыдущими версиями этого текста, Э. Алкину, И. Богданову, Р. Карасеву, Д. Мусатову, А. Рябичеву и О. Стырту за полезные обсуждения, а также А. Рябичеву за подготовку некоторых рисунков.
}}
}

%do not remove
\date{}

\maketitle

\talkno{\tableofcontents}

\subsection*{Предисловие: неформальное описание темы и основных результатов}\label{s:intrnf}
\addcontentsline{toc}{subsection}{Предисловие: неформальное описание темы и основных результатов}

Этот обзор демонстрирует важные идеи \emph{теории гомологий} (раздела \emph{алгебраической топологии}) на простом языке теории графов. 
Он доступен математикам и программистам, не знакомым с топологией.
Изучение большей части обзора не требует предварительных знаний (за пределами школьной программы), но требует некоторой математической культуры. 
Его разумно изучать \emph{перед} чтением учебников по теории гомологий. 
Кому-то из читателей эти учебники и не понадобятся, будет достаточно изложенных здесь идей. 
А для тех, кто уже изучал теорию гомологий по учебникам, рассмотрение и применение ее частных случаев обычно оказывается интересным и нетривиальным.  

Несмотря на элементарность, обзор подводит читателя к переднему краю науки, см. статьи \cite{Va92, FH10, MS17, SS23, Dz25} и ссылки в них.

Сначала мы введем и обсудим основное понятие, естественно возникающее в теории графов, а потом опишем основные результаты.

Назовем \textbf{$1$-циклом} в графе множество $C$ ребер такое, что каждая вершина принадлежит четному числу ребер из $C$.
\talkno{(Более точное название: одномерный симплициальный цикл по модулю~$2$.)}
Сравните со следующей теоремой Эйлера: \emph{в связном графе существует цикл, проходящий по каждому ребру ровно один раз, тогда и только тогда, когда каждая вершина этого графа принадлежит четному числу ребер}. 
См. также \S\ref{s:congra}. 
%задачи \ref{p:ouvert} и \ref{p:nonret}).
Версией целочисленного обобщения этого понятия является понятие допустимого потока (feasible flow) \cite{Fl}.

Рассмотрим примеры.
Множество ребер цикла в графе, не проходящего ни по одному ребру дважды, является $1$-циклом, но не наоборот.
Пустое подмножество ребер также является $1$-циклом.
В графе-цикле два $1$-цикла, в~графе <<восьмерка>> "--- четыре, а~в~полном графе на 4 вершинах "--- восемь (последнее не так очевидно).

Понятие $1$-цикла и его обобщения имеют многочисленные приложения.
Однако привести здесь даже простейшие из них не представляется целесообразным. 
Это связано с тем, что приложения более сложны, чем само понятие $1$-цикла. 
Ведь $1$-циклы возникают как средство изучения более сложных объектов (<<препятствий>>, представляемых <<коциклами>>, ср. с \cite[\S9]{Sk}). 
О наиболее простых приложениях см., например, \cite[\S\S 4.11, 6, 9]{Sk20}, \cite[\S\S 1, 8, 9, 10]{Sk}. 
По этим книгам читатель сможет познакомиться не только с приложениями, но и с основами теории гомологий, обобщающей идеи, приведенные здесь и при обсуждении приложений.

\smallskip
Этот текст начинается со следующих базовых утверждений. 
 Для данного графа мы найдем

$\bullet$ количество всех 1-циклов в нем (утверждение \ref{pr:orinumber2}.a, к нему подводит утверждение~\ref{l:gencom1}.a); 
 
$\bullet$ <<небольшое>> количество $1$-циклов в нем таких, что любой $1$-цикл является \emph{суммой} нескольких из них (утверждения~\ref{l:gencom1}.cde, \ref{pr:knn1}.c, \ref{l:tkn} и лемма  \ref{lem:sumcyc});

$\bullet$ соотношения между <<простейшими>> 1-циклами (утверждения \ref{pr:rel1}.b и \ref{pr:knn2}.b).

\smallskip
Основные результаты, излагаемые в этом тексте "--- описание (и вычисление количества)
 
$\bullet$ симметричных $1$-циклов в графах с симметрией 
%лемма \ref{p:symgra} и 
(утверждения \ref{prop:inv} и \ref{p:uni1s}); 

%\ref{p:homol}.b 

%$\bullet$ \emph{$2$-циклов} в полном \emph{$2$-гиперграфе} (утверждение \ref{l:gencom}.b);  

%$\bullet$ \emph{многомерных циклов} в многомерном аналоге полного двудольного графа (задача \ref{pr:rook}.d);

$\bullet$ $1$-циклов \emph{по модулю границ} и $2$-циклов в \emph{квадрате графа} 
(теоремы~\ref{stcycles2} и~\ref{t:kunneth} "---  это частные случаи знаменитой 
формулы Кюннета о <<гомологиях произведения>> 
%теорема 
\cite[11.8.3]{Sk20}, \cite[\S15.6]{FF89});  

$\bullet$ порождающих в множестве $2$-циклов во \emph{взрезанном квадрате графа} (теорема \ref{t:2cyc-sym-bases}).

Все выделенные выше курсивом понятия определены в тексте.
Последний из этих результатов связан с парадоксальным примером: различные поверхности допускают <<одинаковую>> склейку из одинакового числа квадратиков (замечание \ref{remark:2cycl-bij}.ab).

Имеются красивые задачи для исследования\talkno{ (\ref{stcyclesdel1}, \ref{stcyclesdel}.bc, \ref{c:stcyclesdel} и \ref{p:sawrong}.bcd)},
решение которых нам неизвестно; некоторые из них могут оказаться несложными. 
\talkonly{\newpage}

\talkno{

\subsection*{О стиле этого текста}
\addcontentsline{toc}{subsection}{О стиле этого текста}

%editing rights: ONLY AS

Основные идеи представлены на <<олимпиадных>> примерах: на простейших 
%(обычно маломерных) 
частных случаях, свободных от технических деталей, и со сведением к необходимому минимуму научного языка.
За счет этого и текст становится доступным для начинающих, и удается быстро добраться до интересных сложных и важных результатов, методов и теоретических идей.
Таким образом, многие из приведенных идей работают для более общих случаев. 
Мы не тратим время читателя на несложные обобщения. 
Их легко придумать или найти в литературе. 
Трудно 
%За счет этого читатель учится видеть возможность 
именно \emph{применить} общую теорию для ярких результатов, сформулированных вне этой теории, когда неизвестно заранее, какую теорию нужно применять (и вообще, можно ли хоть какую-то теорию применить). 

\emph{`Listeners are prepared to accept unstated (but hinted) generalizations much more than they are able, on the spur of the moment, to decode a precisely stated abstraction and to re-invent the special cases that motivated it in the first place.'}\footnote{Слушатели в гораздо большей степени готовы воспринимать не сформулированные (но упомянутые) обобщения, чем способны <<навскидку>> раскодировать явно сформулированную абстракцию и воспроизвести те конкретные примеры, которые изначально ее мотивировали.} \cite{Ha74}

\emph{`The preceding theorem generalizes to many situations. In fact, there may not actually be an optimal generalization in the sense that no matter what generalization is given, someone could produce a more general one. One of the advantages of being an understander rather than a theorem quoter is that one may be able to obtain approaches to a wide variety of theorems some of which may not even have been formulated yet.'}\footnote{Предшествующая теорема имеет многочисленные обобщения. В действительности, «наилучшего» обобщения может и не быть, в том смысле, что для каждого обобщения кто-то сможет найти еще более общее. Одним из преимуществ читателя, ориентированного на понимание сути, а не просто на цитирование теорем, является способность получать подходы к широкому спектру теорем, не все из которых вообще сформулированы.}
%причём некоторые из них могут быть даже еще не сформулированы. 
\cite{Bi83}

Более простой материал приводится, чтобы сделать естественным и доступным более сложный.
Попытка начинать с более простого (например, с частных случаев) повышает самостоятельность --- а, значит, глубину и надежность --- освоения материала. 
%Б\' ольшая самостоятельность требует больше времени, но окупается  
Проще самому доказать частный случай, самому продумать переход от частного к общему, чем  самому сразу доказать общий случай. 
Самостоятельно придуманное надежнее запоминается и легче модифицируется. 
%для других ситуаций. 
Кроме того, обычно на частном случае проще отловить и исправить ошибки. 
Подробнее см. \cite[\S11.2]{ZSS}. 

%Таким образом, для изучения этого текста достаточно владения семестровым курсом геометрической
%топологии и основами теории графов.
%В то же время для тех, кто уже изучал алгебраическую топологию, ее применение к конкретным задачам обычно оказывается нетривиальным и интересным.

Как правило, мы приводим \emph{формулировку} красивого или важного утверждения \emph{перед} последовательностью определений и результатов, составляющих его \emph{доказательство}.\algor{\footnote{Часто происходит обратное: формулировки красивых результатов и важных проблем, ради которых была придумана теория, приводятся только {\it после} продолжительного изучения этой теории (или не приводятся совсем). Это развивает представление о математике как науке, изучающей немотивированные понятия и теории. Такое представление принижает ценность математики.}}
В~таких случаях для доказательства утверждения требуется часть дальнейшего материала.
Об этом указано после формулировки утверждения.
%Но мы не лишаем Вас удовольствия самостоятельно найти момент, когда Вы наконец-то сможете доказать такое утверждение. 
%Вообще, если Вы застряли в каком-то месте, попробуйте читать дальше: дальнейший материал может пояснить предыдущий. 
Некоторые результаты приведены без доказательств, тогда даются ссылки вместо указаний.   
Если к утверждению (или задаче) не приведено ни доказательство, ни ссылка на него, то оно несложно.  
Основные результаты называются <<теоремами>>, менее важные результаты "--- <<утверждениями>>, важные вспомогательные результаты "--- <<леммами>>.
%knot-theory менее важные результаты (обычно на них нет ссылок из других разделов) называются <<утверждениями>>.
\invadraw{Еще менее важный или широко известный материал
%(в частности, многие доказательства)
приведен в приложениях.}
%knot-theory {\it Утверждения} в этом тексте --- простые элементы теории (для читателя, частично знакомого с материалом, они являются краткими напоминаниями). 
 
В тексте есть 
%имеется 
небольшое количество задач (полезно рассматривать и утверждения как задачи).
%lktg: Мы излагаем теорию в виде последовательности задач.  
%[Следующие фраза и сноска взяты из опубликованной в МП статьи:]
Изучение путем решения задач характерно для серьезного обучения математике, см. \cite[\S1.1]{HC19}, \cite[\S1.2]{ZSS} и данные там ссылки. 
%\cite [Introduction, Learning by doing problems]{Sk21m}
Оно продолжает древнюю культурную традицию.\algor{\footnote{Например, послушники дзенских монастырей обучаются, размышляя над загадками, данными им наставниками.
Впрочем, эти загадки являются скорее наводящими на размышления парадоксами, а не задачами.
См. подробнее \cite{Su}.}}
%Задачи пронумерованы жирными цифрами, слово <<задача>> опускается.
\invadraw{In a lecture we often start with a question and only then give an answer, e.g.
`Может ли количество ... быть равным 7?'  instead of Assertion \ref{pla-kon}.a.}
Если условие задачи является формулировкой утверждения, то в~задаче требуется это утверждение доказать (и тогда в ссылках мы называем это утверждение утверждением, а не задачей).
Краткие указания
%algolink???
%, решения и доказательства к утверждениям,
к задачам
%, теоремам и леммам
приводятся сразу, более длинные в конце параграфа.
%algolink???
%Однако рекомендуем читателю не сразу пользоваться указаниями. подумать над ними самостоятельно.
\emph{Загадкой} называется не сформулированный четко вопрос; здесь нужно придумать и~четкую формулировку, и~доказательство.
%Если некоторая задача не получается, то читайте дальше --- соседние задачи могут оказаться подсказками.
%(На занятии задача-подсказка выдается только тогда, когда студент подумал над самой задачей.)
\algor{Если номер задачи помечен звездочкой,
%(например, $(b)^*$),
то эта задача посложнее соседних.}
%Такие задачи можно отложить до тех пор, пока Вы не решите остальные задачи этого пункта.
\invadraw{Имеются красивые задачи для исследования, полное решение которых мне неизвестно.}
%; они также отмечены звездочкой.
%Для решения приводимых задач не нужно специальных знаний, все новые определения будут даны.
%При этом потребуется (и будет далее развиваться) опыт сообразительности, т.е. математическая культура.
%Если номер задачи помечен кружком (например, $5^\circ$), то это базовая задача.
%Рекомендуем решить все базовые задачи из раздела до того, как вы приступите к остальным задачам раздела.

Определения важных понятий даны \textbf{жирным шрифтом}, чтобы их было проще найти.
\algor{{\it Разделы и задачи, отмеченные звездочкой, а также замечания,} не используются в дальнейшем.}

{\it Разделы и задачи, отмеченные звездочкой,} не используются в дальнейшем.

}

%%%%%%%%%%%%%%%%%%%%%%%%%%%%%%%%%%%%%%%%%%%%%%

\section{Одномерные циклы}\label{s:1cyc}

\subsection{Основные обозначения и определения}\label{s:nota}
 
В большей части параграфа \ref{s:1cyc} можно работать с графами на интуитивном уровне.
Строго говоря, \textbf{графом} называется пара $(V,E)$ из конечного множества~$V$ и семейства~$E$, состоящего из нескольких двухэлементных подмножеств множества~$V$.  
\textbf{Ребром} называется элемент семейства~$E$.

В этом тексте $K$ "--- граф.
Часто мы обозначаем ребро $\{a,b\}$ коротко через $ab$, а семейство ребер графа --- так же, как сам граф. 
Обозначим через

$\bullet$ $[n]$ множество $\{1, 2, \ldots, n\}$;

$\bullet$ $K_n$ полный граф на множестве $[n]$ вершин;   

$\bullet$ $K_{m,n}$ полный двудольный граф с долями  $[m]$ и $[n]'$ (мы обозначаем через $A'$ копию множества~$A$).  

\short{
\begin{figure}[ht]\centering
    \includegraphics[scale=1.5]{resko00.23.eps} 
    \caption{\talkonly{\Large} Непланарные графы $K_5$ и $K_{3,3}$}
    \label{planar}
\end{figure}
}

Напомним, что \emph{простым циклом} $v_1v_2\ldots v_k$ 
%=(v_1,v_2,\ldots,v_k) 
в графе называется набор $\{v_1v_2, v_2v_3, \ldots, v_kv_1\}$ ребер, для которого вершины $v_1, \ldots, v_k$ попарно различны.
%Мы пропускаем слово <<простой>>, если оно понятно из контекста.

\short{
\begin{figure}[ht]\centering
    \includegraphics[width=10cm]{cycles_sums.eps}
    \caption{Слева: $1$-цикл. Справа: его представления в виде суммы простых 
    циклов}\label{f:cycles_sums}
\end{figure}

%не в том параграфе!!!
\begin{figure}[ht]\centering
    \includegraphics[width=2.5cm]{cycles64.eps}
    \caption{Цикл длины 6 является суммой трех циклов длины 4}\label{f:cycles64}
\end{figure}
}

\textbf{Суммой} (суммой по модулю $2$, или симметрической разностью, рис. \ref{f:cycles_sums} и \ref{f:cycles64}) множеств $A$ и $B$ называется
\[
    A + B := (A \cup B) \setminus (A \cap B).
\]

Пустой 1-цикл мы обозначаем нулем. 

Обозначим через $H_1(K)$ множество всех $1$-циклов в графе $K$, с операцией сложения. 

Мы иногда сокращаем $f(x)$ до $fx$ для образа элемента $x$ при отображении $f$.

\talkonly{\newpage}
\subsection{Как появляются одномерные циклы}\label{s:congra}

%Одномерные циклы в конкретных графах *
%Этот пункт формально не используется далее. 
 
\begin{pr}\label{p:ouvert} 
    На плоскости нарисован без самопересечений граф (не обязательно связный), из каждой вершины которого выходит четное число ребер. 
    Тогда грани 
    %не ЕГО грани, ибо грани не в графе, а в дополнении
    можно раскрасить в два цвета {\it правильно}, т.~е. так, что при переходе через каждое ребро цвет грани меняется.  
(Определение грани можно найти, например, в \cite[\S 1.3]{Sk20}.)
\end{pr}

Это означает, что если плоский граф является $1$-циклом, то он равен сумме границ всех его граней одного цвета.
(Здесь \emph{границей} грани называется множество ребер, примыкающих к этой грани.)

\begin{pr}[Т. Голенищева-Кутузова, В. Клепцын и И. Ященко]\label{p:nonret} 
В некоторых клетках квадрата 20×20 стоит стрелочка в одном из четырех направлений. На границе квадрата все стрелочки смотрят вдоль границы по часовой стрелке. Кроме того, стрелочки в соседних (возможно, по диагонали) клетках не смотрят в противоположных направлениях. Докажите, что найдется клетка, в которой стрелочки нет.
\end{pr}

\begin{proof}[Доказательство (Э. Алкин)]
Назовем узел сетки \emph{внутренним}, если он содержится в четырех клетках, и \emph{граничным}, если содержится не более чем в двух клетках.
 
Пусть стрелочки стоят во всех клетках. Для каждой пары соседних (по общей стороне) клеток, в одной из которых стоит стрелка вверх, а другой --- стрелка вправо, раскрасим их общую сторону в красный цвет. В следующем абзаце докажем, что из каждого внутреннего узла выходит четное число красных ребер.
 
Рассмотрим произвольный внутренний узел и смежные с ним четыре клетки. Можно считать, что из этого узла выходит хотя бы одно красное ребро. Значит, в четырех смежных с узлом клетках стоят только стрелки вверх и стрелки вправо. Начав в любой клетке, совершим обход этих четырех клеток, каждый раз переходя в смежную по ребру клетку. Во время обхода мы пересечем красные ребра столько раз, сколько было изменений ориентаций стрелок. Поскольку после обхода мы вернулись в стартовую клетку, таких смен ориентаций было четное число.
 
Итак, из каждого внутреннего узла выходит четное число красных ребер. 
Из условия следует, что из одного граничного узла выходит одно красное ребро, а из остальных граничных узлов --- нисколько. 
Значит, сумма степеней всех вершин красного графа нечетна. 
Противоречие.
\end{proof}

\textbf{Замечание.}
Это доказательство чуть более простое, чем доказательства, опубликованные в брошюре ММО-2010 (задача 6 для 8 класса). 
В нем естественно появляется понятие 1-цикла (по модулю 2 и по модулю границы квадрата). 
Для знакомых с основами топологии добавим, что это доказательство похоже на доказательство леммы Шпернера \cite{Sh89} и на кусочно линейную версию, предположительно известную до 1963 года, доказательства Хирша 1963 года неретрагируемости диска на его границу \cite[п. 3.11 электронной версии]{Sk20}. 
 
%$\bullet$ иллюстрирует превосходство гомологических методов над гомотопическими (незначительное в таком простом примере, но значительное в более сложных задачах).   

\talkonly{\newpage}
\subsection{Одномерные циклы в графах}\label{s:cgra1}

\begin{proposition}\label{en:gencom1:sum1} 
    Сумма $1$-циклов является $1$-циклом.
\end{proposition}

\begin{proof}
    Возьмем $1$-циклы $C_1$ и $C_2$.
    Назовем \emph{степенью}~$d_C(v)$ вершины~$v$ в наборе~$C$ ребер количество ребер этого набора, которым принадлежит вершина~$v$.
    Тогда $d_{C_1+C_2}(v) = d_{C_1}(v) + d_{C_2}(v) - 2d_{C_1\cap C_2}(v)$ четно.
    Поэтому $C_1+C_2$ является $1$-циклом.
\end{proof}

\begin{pr}\label{l:gencom1} 
    (a) Количество $1$-циклов в $K_n$ равно $2^{\binom{n-1}{2}}$.

    (b) Если все ребра $1$-цикла в $K_n$ имеют общую вершину, то этот $1$-цикл пустой.
    
    (c) Любой $1$-цикл в $K_n$ является суммой нескольких циклов длины~$3$.

    (d) Любой $1$-цикл в $K_n$ является суммой нескольких из следующих циклов: цикл $123$ и циклы длины~$4$.

    (e) Цикл $123$ не является суммой никаких циклов длины~$4$ в $K_n$.
\end{pr}

П.~(b) доказывается напрямую. 

\begin{proof}[Доказательство пп.~(c,d)] 
    (c) Покажем, что произвольный $1$-цикл $C$ в $K_n$ равен сумме
    \[
        \widehat C := \sum\limits_{\substack{ij \in C\\ i,j < n}} ijn
    \]
    циклов $ijn$ по всем ребрам $ij\in C$, не содержащим вершину $n$.
    Каждое ребро в $C + \widehat C$ содержит вершину $n$. 
    Из этого и из п.~(b) следует, что $C + \widehat C = \emptyset$, т.~е. $C=\widehat C$.
    
    \talkno{(П.~(c) также следует из утверждения \ref{pr:orinumber1}.)}

    (d) Из п.~(c) следует, что достаточно доказать п.~(d) для циклов длины~$3$.
    Назовем цикл длины~$3$ \emph{представимым}, если его сумма с циклом $123$ представима в виде суммы циклов длины~$4$.
    Цикл $123$ представим.
    Сумма двух циклов длины~$3$, имеющих общее ребро, является циклом длины~$4$. 
    Поэтому если цикл $\alpha$ длины~$3$ представим, то представим и любой цикл длины~$3$, имеющий с $\alpha$ общее ребро.
    Следовательно, все циклы длины~$3$ представимы.
\end{proof}

\begin{proof}[Набросок доказательства п.~(e)]
    Для конечного множества $X$ обозначим через $|X|_2\in\Z_2$ четность количества элементов в множестве~$X$.
    Тогда для любых $1$-циклов $C,C'$ выполнено $|C+C'|_2 = |C|_2 + |C'|_2$.
\end{proof}
 
\begin{proof}[Доказательство утверждения \ref{l:gencom1}.a] 
    Достаточно доказать, что существует взаимно однозначное соответствие между $H_1(K_n)$ и множеством $2^{K_{n-1}}$ подмножеств ребер из $K_{n-1}$.
 
    Определим отображения
    \begin{align*}
        \varphi\colon H_1(K_n) \to 2^{K_{n-1}}\quad & \text{формулой} \quad \varphi C := C \cap K_{n-1}, \\
        \widehat \varphi\colon 2^{K_{n-1}} \to H_1(K_n)\quad & \text{формулой} \quad
        \widehat \varphi D := \sum\limits_{ij\in D} ijn.
    \end{align*}
    Поскольку $\varphi(ijn)=\{ij\}$, то $\varphi\widehat \varphi D = D$ для любого $D \subset K_{n-1}$.
    Обратно, для любого $C\in H_1(K_n)$ имеем $\widehat \varphi\varphi C = \widehat{C} = C$, где второе равенство доказано в доказательстве п.~(c).
    Таким образом, $\varphi$ и $\widehat\varphi$ являются взаимно однозначными соответствиями.
\end{proof}

\begin{proposition}[рис. \ref{f:cycles_sums}]\label{pr:simsum} 
    Любой $1$-цикл является суммой нескольких простых циклов. 
\end{proposition}

Это следует из леммы \ref{lem:sumcyc}.a. 

\begin{proof}[Набросок прямого доказательства]
Достаточно доказать, что любой $1$-цикл $C$ является суммой нескольких реберно непересекающихся простых циклов, содержащихся в $C$. 
Так как каждая вершина $1$-цикла принадлежит четному числу его ребер, то в $C$ содержится  простой цикл.
Удаляя из $C$ ребра этого простого цикла, получаем также $1$-цикл.
Продолжая, получим набор реберно непересекающихся простых циклов, объединение которых равно $C$. 
Поэтому их сумма равна $C$.
\end{proof}
 
Простой цикл называется \emph{бесхордовым}, если никакое ребро графа не соединяет две непоследовательные вершины этого цикла. 

%не объединяем с {pr:simsum}
\begin{proposition}\label{pr:orinumber1}
    Любой $1$-цикл является суммой нескольких бесхордовых циклов.
\end{proposition}

Это следует из утверждения \ref{pr:simsum}, поскольку любая <<хорда>> простого цикла дает его представление в виде суммы двух простых циклов меньшей длины.

Следующее утверждение \ref{pr:orinumber2} (и его доказательство) аналогично утверждению~\ref{l:gencom1} с заменой полного графа на произвольный граф. 

\begin{proposition}\label{pr:orinumber2}
(a) В связном графе с~$V$ вершинами и~$E$ ребрами количество $1$-циклов равно $2^{E-V+1}$.

(b) В дереве есть ровно один $1$-цикл, и он пустой.
\end{proposition}
 
П.~(b) доказывается напрямую. 
%!!![удалить по предложению редколлегии]
П.~(a) следует из нижеприведенной леммы~\ref{l:bij1}, использующей п.~(b). 

\short{
\begin{figure}[ht]\centering
    \includegraphics[width=12cm]{K_3_3_tree.eps}
    \caption{Слева: $K_{3,3}$, ребра вне остовного дерева выделены пунктиром. Два рисунка в центре: $\widehat \sigma$ и $\widehat \tau$ для соответствующих пунктирных ребер $\sigma$ и $\tau$. Справа: $\widehat \sigma + \widehat \tau$.}\label{f:k_33}
\end{figure}
}

В следующих двух леммах $K$ --- связный граф, а $T$ --- произвольное остовное дерево в нем.
Для ребра $\sigma \in K\setminus T$ обозначим через $\widehat\sigma = \widehat\sigma_T$ простой цикл в $K$, образованный ребром $\sigma$ и простым путем в $T$, соединяющим концы ребра $\sigma$ (рис. \ref{f:k_33}).

\begin{lemma}\label{l:bij1}
    Определим отображения
    \begin{align*}
        \varphi\colon H_1(K) \to 2^{K\setminus T }&\quad\text{формулой}\quad \varphi C := C\setminus T, \\
        \widehat \varphi\colon 2^{K\setminus T} \to H_1(K)&\quad\text{формулой}\quad 
        \widehat \varphi D := \sum\limits_{\sigma \in D} \widehat \sigma.
    \end{align*}
    Отображения $\varphi$ и $\widehat \varphi$ являются взаимно однозначными соответствиями между $H_1(K)$ и множеством $2^{K\setminus T}$
    подмножеств ребер из $K\setminus T$.
\end{lemma}

\begin{proof}
    Поскольку $\varphi \widehat \sigma=\{\sigma\}$, то $\varphi \widehat \varphi D = D$ для любого $D\subset K\setminus T$.
    Обратно, для любого $C \in H_1(K)$ имеем $\widehat \varphi\varphi C = C$ по лемме~\ref{lem:sumcyc}.b (ибо $\widehat \varphi\varphi C = \widehat C$). 
\end{proof}
 
\begin{lemma}\label{lem:sumcyc} 
    (a) Любой $1$-цикл в $K$ является суммой нескольких циклов $\widehat\sigma$.
    
    (b) Для любого $1$-цикла $C$    
        \[
            C = \widehat C:= \sum_{\sigma \in C \setminus T} \widehat \sigma.
        \]
    
    (c) Представление~из п.~(a) единственно.
\end{lemma}

П.~(a) следует из п.~(b). 
П.~(c) получается рассмотрением ребер вне $T$.

\begin{proof}[Доказательство п.~(b)]
    Так как $C$ и $\widehat C$ являются $1$-циклами, то $C':=C + \widehat C$ тоже является $1$-циклом.
    Поскольку $\widehat\sigma \setminus T = \{\sigma\}$ для любого ребра $\sigma\in K\setminus T$, то $\widehat C \setminus T = C \setminus T$.
    Значит, $C'\setminus T=0$. 
    Поэтому $C'\subset T$. 
    Тогда из утверждения~\ref{pr:orinumber2}.b следует, что $C'=0$, т.~е. $\widehat C = C$.
\end{proof}

Множество $B \subset H_1(K)$ называется \textbf{базисом} множества $A \subset H_1(K)$, если каждый $1$-цикл из $A$ имеет единственное представление в виде суммы нескольких $1$-циклов из $B$. 
Например, базисом в $H_1(K)$ является 

$\bullet$ для связного графа $K$ и произвольного остовного дерева в нем --- множество $1$-циклов $\widehat\sigma$ (ввиду лемм \ref{lem:sumcyc}.ac);  

$\bullet$ для плоского графа $K$ --- множество границ всех граней, кроме одной. 

См. другой пример в лемме~\ref{p:symgra}.b.
%Читатель наверняка догадался (или знает), что 
Базис можно определить и для более общих  случаев, см. раздел <<О стиле этого текста>> и замечание \ref{r:gene}.b. 
 
 \short{
\begin{figure}[ht]\centering
     \includegraphics[width=8cm]{subdivision.eps} %pict.1.eps
     \caption{Подразделение ребра (слева), стягивание ребра (справа)}\label{f:subdivision}
\end{figure}
}

\begin{remark}\label{r:gene}
    (a) Количество $1$-циклов в графе не меняется ни при \emph{подразделении} ребра, ни при \emph{стягивании} ребра (рис. \ref{f:subdivision}). 
    (Вторую из этих идей нужно формулировать аккуратно, ибо при стягивании ребра граф может перестать быть графом, а стать <<мультиграфом>>.)
    Это можно либо вывести из утверждения~\ref{pr:orinumber2}.a, либо доказать напрямую 
    (и тогда использовать для другого доказательства утверждения~\ref{pr:orinumber2}.a; впрочем, этот подход не будет более простым).

    (b) Операция суммы позволила коротко записать доказательство утверждения~\ref{pr:orinumber2}.a (хотя можно было обойтись без нее). 
    Рассмотрение не просто множеств, а множеств с операциями, часто оказывается полезным. 
    Но сейчас читателю вряд ли интересно выписывать использованные очевидные свойства этой операции (т.е. называть множество с этой операцией линейным пространством над $\Z_2$ или абелевой 2-группой).  
    Точно так же, как обсуждать структуру линейного пространства над $\Z_2$ на семействе всех подмножеств $n$-элементного множества тому, кто только что узнал, что в этом семействе $2^n$ подмножеств.  
    %Из такого обсуждения можно вывести, что количество 1-циклов является степенью двойки. 
    %Ценность этого обсуждения пока невелика, поскольку и без него видно,   что 1-циклы соответствуют подмножествам ребер вне остовного дерева.
    Однако, когда для получения ярких результатов будут нужны разные операции с разными свойствами, обсуждение этих свойств (и соответствующих математических терминов) уже будет интересно читателю.  
    %подводит к лин. алгебры, \cite{BF92, Ma10, Ra15}.   
\end{remark}

Следующая задача подводит к своему многомерному обобщению --- к задаче \ref{pr:rook}.ef.

\begin{pr}\label{pr:knn1}
    (a) Количество $1$-циклов в $K_{n,n}$ равно $2^{(n-1)^2}$.
    
    (b) Если каждое ребро некоторого $1$-цикла в $K_{n,n}$ содержит хотя бы одну из вершин $n$ или $n'$, то этот $1$-цикл пустой.
        
    (c) Любой $1$-цикл в $K_{n,n}$ является суммой нескольких циклов длины~$4$.
\end{pr}

Обозначим через $\t{K_n}$ граф, полученный из $K_{n,n}$ удалением всех ребер $jj'$, $j\in[n]$ (соединяющих вершину $j$ с ее <<копией>> $j'$ в другой доле графа $K_{n,n}$). 
Например, $\t{K_3}$ "--- цикл $12'31'23'$ длины $6$. 
Граф $\t{K_n}$ естественно появляется в лемме~\ref{t:2cyc-bij}.
% Из следующей леммы далее используется только п. (b) и только для леммы \ref{p:symgra}.b (но п. (a) интересен в связи с леммой \ref{p:symgra}). 

\begin{proposition}\label{l:tkn}
    Пусть $n\ge4$. 
    Тогда любой $1$-цикл в $\t{K_n}$ является суммой нескольких циклов длины $4$.
\end{proposition}

\begin{proof}
    Любой простой цикл в $\t{K_n}$ образован $l$-элементной циклической последовательностью вершин, такой что 
    
    $\bullet$ число~$l$ четное и больше~$2$; 
    
    $\bullet$ в последовательности чередуются вершины из разных долей;
    
    $\bullet$ внутри каждой отдельной доли вершины не повторяются; 
    
    $\bullet$ не существует двух последовательных вершин $m$ и $m'$.

    Предположим, что этот цикл бесхордовый.
    Тогда любые две непоследовательные вершины цикла не являются смежными, т.~е. либо находятся в одной доле, либо являются вершинами $m$ и $m'$.
    Для любой вершины цикла имеется $\frac{l}{2}-2$ не соседних с ней по циклу вершин из другой доли.
    Тогда $\frac{l}{2}-2\le1$, так что $l\in\{4,6\}$.
    
    В бесхордовом цикле длины~$6$ любые две противоположные вершины равны $m$ и $m'$.
    Следовательно, цикл равен $m_1m'_2m_3m'_1m_2m'_3$ для некоторых попарно различных $m_1,m_2,m_3$.
    Так как $n\ge4$, то существует $a\in[n]-\{m_1,m_2,m_3\}$.
    Тогда цикл равен $m_1m_2'm_3a' + m_2m_3'm_1a' + m_3m_1'm_2a'$ (рис. \ref{f:cycles64}).
    Теперь требуемый результат следует из утверждения~\ref{pr:orinumber1}.
\end{proof}

\begin{proof}[Другое доказательство утверждения \ref{l:tkn}]
    Достаточно доказать утверждение только для циклов $\widehat{ij'}$ из  доказательства леммы \ref{p:symgra}.b, поскольку они образуют базис в $H_1(\t{K_n})$.
    Для циклов $\widehat{3j'}$ и $\widehat{i2'}$ утверждение тривиально.
    Для $i \neq 3$ и $j \neq 2$ имеем 
    $$\widehat{ij'} = \begin{cases}
        12'ij' + 2'31'i, & i \neq 2;\\
        12'3j' + 31'2j', & j \neq 3;\\
        12'43' + 2'31'4 + 1'23'4 \text{ (рис. \ref{f:cycles64})}, & i = 2,~ j = 3. 
    \end{cases}$$
  
\end{proof}

%%%%%%%%%%%%%%%%%%%%%%%%%%%%%%%%%%%
\talkonly{\newpage}\subsection{Соотношения между одномерными циклами}\label{s:cgra2}

В этом разделе мы опишем соотношения между простейшими $1$-циклами (треугольниками в полном графе, а также циклами длины 4 в полном двудольном графе).
Этот материал обобщается в утверждениях \ref{pr:rel} и \ref{pr:rook2}.
    
\begin{proposition}\label{pr:rel1}    
    (a) Для любых вершин $a,b,c,d$ в $K_n$ выполнено $abc+abd+acd+bcd=0$.
    
    (b) Любое линейное соотношение между циклами длины~$3$ является суммой нескольких соотношений из утверждения (a).
\end{proposition}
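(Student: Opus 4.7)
План действий таков. Пункт (a) проверяется напрямую подсчётом: каждое из шести рёбер четвёрки $\{a,b,c,d\}$ входит ровно в два из треугольников $abc, abd, acd, bcd$ (например, ребро $ab$ --- в $abc$ и $abd$), так что сумма этих четырёх треугольников по модулю~$2$ пуста.

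Для пункта (b) я введу следующие обозначения: пусть $T$ --- множество всех треугольников в $K_n$, $V := \Z_2^T$ --- пространство их формальных сумм по модулю~$2$, а $\pi \colon V \to H_1(K_n)$ --- линейное продолжение вложения $T \hookrightarrow H_1(K_n)$. Тогда <<линейное соотношение между циклами длины~$3$>> --- это в точности элемент ядра $\ker \pi$. Пусть $W \subset V$ --- подпространство, порождённое соотношениями $abc + abd + acd + bcd$ из~(a); по пункту~(a) сразу $W \subset \ker \pi$, и задача сводится к обратному включению $\ker \pi \subset W$.

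Для его доказательства я воспользуюсь построенным при доказательстве утверждения~\ref{l:gencom1}.a базисом $\{ijn : 1 \le i < j < n\}$ пространства $H_1(K_n)$. Для любого треугольника $abc$ с $a,b,c < n$ соотношение из~(a) для четвёрки $\{a,b,c,n\}$ имеет вид $abc + abn + acn + bcn = 0$, что по модулю $W$ позволяет заменить $abc$ на $abn + acn + bcn$. Применяя такую замену ко всем <<лишним>> треугольникам, представлю произвольный элемент $v \in V$ в виде $v = w + s$, где $w \in W$, а $s$ --- линейная комбинация базисных треугольников $ijn$.

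Если теперь $v \in \ker \pi$, то $\pi(s) = \pi(v) + \pi(w) = 0$, а из линейной независимости треугольников $ijn$ в $H_1(K_n)$ (равносильной инъективности отображения $\widehat\varphi$ из доказательства~\ref{l:gencom1}.a) следует $s = 0$. Значит, $v = w \in W$. Главной нерутинной деталью плана окажется именно аккуратная ссылка на упомянутую биекцию: нам понадобится и то, что треугольники $ijn$ порождают $H_1(K_n)$ (для существования разложения $v = w + s$), и то, что они линейно независимы (для заключения $s = 0$). Всё остальное --- прямой <<алгоритм редукции>> треугольника $abc$ к треугольникам, содержащим вершину~$n$, посредством одной тетраэдральной релации.
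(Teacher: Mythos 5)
Your proof is correct and is essentially the paper's own argument: the paper interprets a linear relation between triangles as a $2$-cycle and derives (b) from assertion \ref{l:gencom}.b, whose proof subtracts exactly the tetrahedral relations $T_{\{a,b,c,n\}}$ through the vertex $n$ --- the same reduction you carry out modulo $W$. The only difference is the final step: where the paper observes that a $2$-cycle all of whose faces contain a common vertex is empty, you invoke the linear independence of the triangles $ijn$ in $H_1(K_n)$ supplied by the bijection in the proof of \ref{l:gencom1}.a; these two observations are interchangeable, so your route is a reformulation (staying in the language of $\ker\pi$ rather than of $2$-cycles) rather than a genuinely different proof.
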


\short{
\begin{figure}[ht]\centering
    \includegraphics[width=3cm]{k7-on-torus2.eps}
    \caption{Триангуляция тора ($7$-вершинная)}\label{f:k7-on-torus}
\end{figure}
}

Например, 

$\bullet$ соотношение $123+124+134+235+245+345=0$ является суммой соотношений
    
     $123+124+134+234=0$ и $234+235+245+345=0$;

$\bullet$ сумма границ треугольников любой триангуляции сферы или тора (рис.~\ref{f:k7-on-torus}) равна $0$.
 
Строго говоря, \emph{линейное соотношение} "--- это множество циклов длины~$3$ в $K_n$ такое, что каждое ребро графа $K_n$ содержится в четном числе циклов из этого множества.    
Поэтому строгая формулировка п.~(b) такова (мы отождествляем циклы длины~$3$ в $K_n$ с $3$-элементными подмножествами множества $[n]$).

Пусть $R_1,\ldots,R_k\subset[n]$ "--- такие $3$-элементные подмножества, что любое $2$-элементное подмножество множества $[n]$ содержится в четном количестве из них. 
Тогда существуют такие $4$-элементные подмножества множества $[n]$, что любое $R_i$ содержится в нечетном количестве из них, а любое другое $3$-элементное подмножество множества $[n]$ содержится в четном количестве из них.

%analogous to {pr:rel}
    %набор $3$-элементных подмножеств множества $[n]$, такой что любое $2$-элементное подмножество множества $[n]$ содержится в четном количестве $3$-элементных подмножеств, являющихся элементами набора $R$. 
    %Тогда существует семейство $R'$, состоящее из $4$-элементных подмножеств множества $[n]$ такое, что любое $3$-элементное подмножество $B \subset [n]$ содержится в $R$ тогда и только тогда, когда $B$ содержится в нечетном количестве $4$-элементных подмножеств, являющихся элементами семейства $R'$.
    
В этой строгой формулировке п.~(b) эквивалентен утверждению~\ref{l:gencom}.a. 

\begin{proposition}\label{pr:knn2}
    (a) Для любых попарно различных $a,b,c\in[n]$ и различных $u',v'\in[n]'$ выполнено $ au'bv' + bu'cv' + cu'av' =0$.  
        
    (b) Любое линейное соотношение между циклами длины~$4$ в $K_{n,n}$ является суммой нескольких соотношений из утверждения~(a) и аналогичных соотношений $a'ub'v + b'uc'v + c'ua'v = 0$.
\end{proposition}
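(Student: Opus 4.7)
\smallskip
\textbf{Proof plan.}

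Part (a) is a direct computation: the three $4$-cycles $au'bv'$, $bu'cv'$, $cu'av'$ collectively use the six edges $au'$, $bu'$, $cu'$, $av'$, $bv'$, $cv'$, and each of these edges lies in exactly two of the three cycles, so the symmetric-difference sum is empty.

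For part (b), my plan is a dimension count combined with an explicit reduction. Let $V$ be the $\mathbb{F}_2$-vector space with basis the set of all $4$-cycles in $K_{n,n}$, and let $\partial\colon V \to \mathbb{F}_2^E$ be the linear map sending each $4$-cycle to the characteristic function of its edge set, where $E$ is the set of edges of $K_{n,n}$. Then linear relations among $4$-cycles are, by definition, elements of $\ker\partial$. Let $V_0 \subseteq \ker\partial$ denote the subspace spanned by the relations of type (a) and of the analogous type (the inclusion $V_0 \subseteq \ker\partial$ is guaranteed by part (a)). The goal is to show $V_0 = \ker\partial$. By Assertion \ref{pr:knn1}.c, every $1$-cycle in $K_{n,n}$ is a sum of $4$-cycles, so the image of $\partial$ equals $H_1(K_{n,n})$, which has dimension $(n-1)^2$ by \ref{pr:knn1}.a. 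Thus $\dim V/\ker\partial = (n-1)^2$, and since $V_0 \subseteq \ker\partial$ automatically $\dim V/V_0 \ge (n-1)^2$; the point is to prove the reverse inequality.

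To bound $\dim V/V_0$ from above, I would exhibit a spanning set of size $(n-1)^2$ in $V/V_0$, namely the classes of the $4$-cycles $C_{a,u'} := a 1' 1 u'$ for $a \in \{2,\dots,n\}$ and $u' \in \{2',\dots,n'\}$ (these are the tree-cycle basis of $H_1(K_{n,n})$ associated with the star-shaped spanning tree through $1$ and $1'$). To verify spanning, I would check by case analysis that every $4$-cycle $D = au'bv'$ is congruent modulo $V_0$ to a sum of $C_{a,u'}$'s. If $1 \in \{a,b\}$ and $1' \in \{u',v'\}$, then $D$ already coincides with some $C_{x,y'}$. If $a=1$ and $u',v' \ne 1'$, the analogous relation on the unprimed pair $\{1,b\}$ and the primed triple $\{1',u',v'\}$ yields $D \equiv C_{b,u'} + C_{b,v'} \pmod{V_0}$. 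If $a,b \ne 1$ and $u'=1'$, the type (a) relation on the unprimed triple $\{1,a,b\}$ and the primed pair $\{1',v'\}$ yields $D \equiv C_{a,v'} + C_{b,v'} \pmod{V_0}$. Finally, in the remaining case $a,b \ne 1$ and $u',v' \ne 1'$, a single type (a) relation with $c=1$ rewrites $D \equiv 1u'bv' + 1u'av' \pmod{V_0}$, reducing to the previous cases.

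The main obstacle will be the bookkeeping in this case analysis: for each subcase I must match the abstract basic relation with the specific three $4$-cycles it produces, accounting for the fact that a $4$-cycle is an unordered object (identified up to rotation and reflection of its cyclic vertex sequence), and the degenerate subcases where some index coincides with $1$ or $1'$ must be handled without ambiguity. Once this is done, the classes $[C_{a,u'}]$ generate $V/V_0$, so $\dim V/V_0 \le (n-1)^2$; combined with the previous inequality this forces equality $\dim V/V_0 = \dim V/\ker\partial$ and hence $V_0 = \ker\partial$, which is exactly the claim of (b).
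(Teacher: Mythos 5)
Your proof is correct, for both parts. For (b) the paper itself gives no standalone argument: it points to the case $\ell=2$ of problem \ref{pr:rook}, whose solution of part (f) shares your first (and main) step — every $4$-cycle, viewed as a parallelogram in $[n]^2$, is rewritten modulo the basic relations as a sum of canonical cycles anchored at a fixed vertex pair (the paper anchors at $n,n'$, you at $1,1'$) — but finishes differently. Instead of a dimension count, the paper observes that each canonical parallelogram $P(a)$ meets $[n-1]^2$ in the single cell $a$, so a nonempty sum of distinct canonical cycles cannot vanish; hence the residual relation is empty and the original relation lies in the span of the basic ones. Your finish replaces this ``private cell'' independence argument by the computation $\dim(V/\ker\partial)=(n-1)^2$ via \ref{pr:knn1}.a and \ref{pr:knn1}.c, a clean linear-algebra shortcut; the trade-off is that your (b) now depends on the count of $1$-cycles, whereas the paper's argument is self-contained at that step and generalizes verbatim to all $\ell$ in \ref{pr:rook}.f. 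One small remark: the bookkeeping you flag as the main obstacle is in fact vacuous, since a $4$-cycle in $K_{n,n}$ is determined by its pair of unprimed and pair of primed vertices (it consists of all four edges between them); consequently each basic relation is unambiguously the sum of the three cycles with the primed pair fixed and unprimed pairs $\{a,b\}$, $\{b,c\}$, $\{c,a\}$ (and symmetrically), and your four cases check out exactly as written.
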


Строго говоря, \emph{линейное соотношение} "--- это множество циклов длины~$4$ в $K_{n,n}$ такое, что каждое ребро графа $K_{n,n}$ содержится в четном количестве циклов из этого множества.
 
О наиболее близких одномерных обобщениях (на целочисленные 1-циклы и на 1-когомологии) см., например, \cite[\S4.11, \S10.5]{Sk20}, \cite[задача 2.2.6.de]{GDI}, \cite[\S9.2]{Sk}.

%%%%%%%%%%%%%%%%%%%%%%%%%%%%%%%
\talkonly{\newpage}
\talkno{
\subsection{Симметричные одномерные циклы}\label{s:sym1}

Укажем, что именно используется далее из этого параграфа.
Симметрия $t$ на графе $\t{K_n}$ используется только начиная с утверждения \ref{l:h2sym}.
Лемма \ref{pr:knbij} используется только в доказательстве леммы \ref{p:symgra}.a.
Лемма \ref{p:symgra} используется только в доказательстве леммы \ref{t:2cyc-sym-lemma}.
Утверждение \ref{prop:inv} используется только в доказательстве утверждения \ref{p:uni1s}.
  
Напомним, что граф $\t{K_n}$ получен из $K_{n,n}$ удалением всех ребер $jj'$, $j\in[n]$. 
Обозначим через $t\colon\t{K_n}\to\t{K_n}$ симметрию (инволюцию), переставляющую доли графа, то есть переставляющую $j$ и $j'$ для каждого $j\in[n]$.  

Через $tQ$ обозначим набор ребер, симметричный набору $Q$ ребер графа $\t{K_n}$.
В частности, $Q$ может быть одним ребром или $1$-циклом.
Например, $t\t{K_3}=\t{K_3}$ и $t(12'31'ij')=1'23'1i'j$.
Назовем набор $Q$ ребер \emph{$t$-симметричным}, если $tQ = Q$.
Например, $\t{K_3}$ является $t$-симметричным.

\begin{lemma}\label{pr:knbij} 
    Существует взаимно однозначное соответствие между $t$-симметричными $1$-циклами в $\t{K_n}$ и $1$-циклами в $K_n$.
\end{lemma}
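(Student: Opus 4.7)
The plan is to exhibit an explicit bijection by sending a $1$-cycle $C$ в $K_n$ to the edge set $\Phi(C):=\{ij',\,ji':ij\in C\}\subset\t{K_n}$, and to verify both directions by counting degrees at the two types of vertices.

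First I would describe the action of $t$ on edges carefully. Since $t$ swaps $j\leftrightarrow j'$, the edge $\{i,j'\}=ij'$ of $\t{K_n}$ (with $i\ne j$) is sent to $\{i',j\}=ji'$; this is a genuinely different edge because the first-part vertices differ (they are $i$ and $j$, and $i\ne j$). Hence the $t$-orbits on $E(\t{K_n})$ are exactly the two-element sets $\{ij',ji'\}$, indexed by unordered pairs $\{i,j\}\subset[n]$ with $i\ne j$. Consequently $t$-symmetric subsets of $E(\t{K_n})$ are in natural bijection with subsets of $E(K_n)$ via the correspondence $\Phi$ above (with inverse $\Psi(D):=\{ij:ij'\in D\}$).

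Next I would check that this correspondence carries $1$-cycles to $1$-cycles. For a vertex $i\in[n]$ of $\t{K_n}$, an edge of $\Phi(C)$ of the form $ij'$ contains $i$ iff $ij\in C$, so the degree of $i$ in $\Phi(C)$ equals $d_C(i)$, which is even. The vertex $i'$ is incident only to edges of the form $ki'$, and $ki'\in\Phi(C)$ iff $ik\in C$, so the degree at $i'$ also equals $d_C(i)$ and is even. Hence $\Phi(C)$ is a $t$-symmetric $1$-cycle. Conversely, for a $t$-symmetric $1$-cycle $D$, the set $\Psi(D)$ is well-defined (since $ij'\in D\Leftrightarrow ji'\in D$ by $t$-symmetry), and the degree of $i$ in $\Psi(D)$ equals the degree of $i$ in $D$, which is even, so $\Psi(D)\in H_1(K_n)$.

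Finally the identities $\Psi\circ\Phi=\id$ and $\Phi\circ\Psi=\id$ are immediate from the definitions, giving the required bijection. The only place where mistakes are easy to make is the description of the orbits of $t$ on edges; once this is pinned down correctly, everything reduces to a routine parity check of vertex degrees.
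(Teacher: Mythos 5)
Ваше доказательство верно и по существу совпадает с тем, что подразумевается в статье: соответствие $\{ij',ji'\}\leftrightarrow ij$ — это в точности соответствие, индуцированное указанным в тексте отображением $\t{K_n}\to K_n$, склеивающим вершины $i$ и $i'$; Вы лишь аккуратно выписали орбиты $t$ на рёбрах и проверку чётности степеней, которые в статье опущены как несложные.
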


Нужное соответствие получается из отображения $\t{K_n}\to K_n$, переводящего вершины $j$ и $j'$ графа $\t{K_n}$ в вершину $j$ графа $K_n$ для каждого $j \in [n]$.

Из утверждения \ref{l:gencom1}.a и леммы \ref{pr:knbij}  вытекает, что количество $t$-симметричных $1$-циклов в $\t{K_n}$ равно $2^{\binom{n-1}{2}}$.

\begin{lemma}\label{p:symgra}
    (a) Любой $t$-симметричный $1$-цикл в $\t{K_n}$ является суммой нескольких из следующих $1$-циклов: $\t{K_3}$ и $Q+tQ$ для циклов $Q$ длины $4$. 

    (b) Множество всех $1$-циклов в $\t{K_n}$ имеет базис, состоящий из ($t$-симметричного $1$-цикла) $\t{K_3}$ и пар простых циклов длины 4 или 6, взаимно симметричных относительно $t$.
\end{lemma}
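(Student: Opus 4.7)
План состоит в том, чтобы сначала доказать пункт~(b), а затем из него вывести пункт~(a).

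Первый ключевой шаг "--- прямое вычисление действия $t$ на базисных элементах из леммы~\ref{l:tkn}.b. Непосредственным перечислением шести рёбер цикла $\widehat{ij'}=12'31'ij'$ и применением инволюции $t$ к каждому ребру убеждаемся, что для любых $i,j>1$, $i\ne j$, $\{i,j\}\ne\{2,3\}$ выполнено
\[ t\widehat{ij'} = \widehat{ji'} + \t{K_3}. \]
Одновременно заметим, что $\widehat{23'} = 12'31'23' = \t{K_3}$ по определениям, так что $\t{K_3}$ входит в указанный базис и является $t$-симметричным.

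Для доказательства~(b) проведём замену базиса из леммы~\ref{l:tkn}.b: для каждой неупорядоченной пары $\{i,j\}\ne\{2,3\}$ с $i,j>1$, $i\ne j$ заменим элемент $\widehat{ji'}$ на $\widehat{ji'}+\t{K_3} = t\widehat{ij'}$. Так как $\t{K_3}=\widehat{23'}$ сам является базисным элементом, эта замена обратима и даёт новый базис. Он состоит из $\t{K_3}$ (он $t$-симметричен) и пар $\{\widehat{ij'},\, t\widehat{ij'}\}$, каждая из которых взаимно симметрична относительно $t$, что и требовалось в~(b).

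Теперь выведем~(a). По~(b) любой $1$-цикл $C$ в $\t{K_n}$ представим однозначно в виде $C = \epsilon\t{K_3} + \sum_k(\alpha_k A_k + \beta_k tA_k)$, где $A_k$ пробегает выбранных представителей указанных пар, $\epsilon,\alpha_k,\beta_k\in\Z_2$. Применяя к этому равенству $t$ и пользуясь единственностью представления, получаем: условие $tC=C$ равносильно $\alpha_k=\beta_k$ при всех~$k$. Значит, любой $t$-симметричный $1$-цикл имеет вид $C = \epsilon\t{K_3} + \sum_k \alpha_k(A_k + tA_k)$. Осталось показать, что каждое слагаемое $A_k+tA_k$ является суммой нескольких выражений $Q+tQ$ для циклов $Q$ длины~$4$: по лемме~\ref{l:tkn}.a запишем $A_k=\sum_l Q_l$ в виде суммы циклов длины~$4$; тогда $A_k+tA_k = \sum_l(Q_l+tQ_l)$, что и даёт искомое представление.

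Основная техническая сложность "--- аккуратное отождествление $\widehat{23'}=\t{K_3}$ и прямая проверка ключевой формулы $t\widehat{ij'}=\widehat{ji'}+\t{K_3}$ (особенно согласование вырожденного случая $\{i,j\}=\{2,3\}$, при котором $\widehat{32'}=0$). После этого всё остальное сводится к линейной алгебре над~$\Z_2$ и применению уже доказанных частей леммы~\ref{l:tkn}.
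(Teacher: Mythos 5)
Ваше доказательство верно. Пункт (b) у вас по существу совпадает с авторским: то же ключевое тождество $t\widehat{ij'}=\widehat{ji'}+\t{K_3}$ (в статье оно записано в виде $\widehat{23'}+\widehat{ji'}=t\widehat{ij'}$) и та же замена базиса из леммы~\ref{l:tkn}.bc, при которой для каждой пары $\{i,j\}\ne\{2,3\}$ один из двух элементов заменяется прибавлением базисного элемента $\t{K_3}$. Отличается путь к пункту (a): в статье он доказывается независимо от (b) --- через взаимно однозначное соответствие из леммы~\ref{pr:knbij} между $t$-симметричными $1$-циклами в $\t{K_n}$ и $1$-циклами в $K_n$ и утверждение~\ref{l:gencom1}.d, с проверкой, что прообраз цикла $123$ есть $\t{K_3}$, а прообраз цикла длины~$4$ есть $Q+tQ$; вы же выводите (a) из (b): единственность разложения по симметрично устроенному базису даёт $\alpha_k=\beta_k$, после чего лемма~\ref{l:tkn}.a раскладывает каждое $A_k+tA_k$ в сумму слагаемых $Q_l+tQ_l$. Ваш вывод остаётся целиком внутри $\t{K_n}$ и не требует анализа прообразов при факторизации, но опирается на лемму~\ref{l:tkn}, сформулированную для $n\ge4$, так что случаи $n\le3$ стоит оговорить отдельно (они тривиальны); авторское доказательство (a) от этого ограничения свободно. Мелкое уточнение: в вырожденном случае $\{i,j\}=\{2,3\}$ цикл $\widehat{32'}$ не равен нулю, а просто не определён (ребро $32'$ лежит в остовном дереве $T$), но на ваше рассуждение это не влияет, поскольку замена базиса производится только для пар $\{i,j\}\ne\{2,3\}$, а для пары $\{2,3\}$ в базисе остаётся лишь $\widehat{23'}=\t{K_3}$.
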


\begin{proof}[Набросок доказательства]
(a) Возьмем взаимно однозначное соответствие из леммы \ref{pr:knbij}. 
Тогда п.~(a) следует из утверждения \ref{l:gencom1}.d и из того, что прообразом цикла длины 3 является $\t{K_3}$, а прообразом цикла длины 4 является $Q+tQ$ для некоторого цикла $Q$ длины 4.

% \emph{Доказательство 2.} Возьмем $t$-симметричный $1$-цикл $C$ в $\t{K_n}$.
% Разложим его по базису из п.~(b).
% Из $t$-симметричности $1$-цикла $C$ и базиса следует, что если в разложении $C$ по базису присутствует слагаемое $Q$, то в этом разложении присутствует и слагаемое $tQ$.
% Тогда, если $Q \neq \t{K_3}$, 
% эти два слагаемых можно сгруппировать в единый $1$-цикл $Q + tQ$.
    
(b)  Остовным деревом графа $\t{K_n}$ является подграф, состоящий из простого пути $12'31'$ и ребер $i1', 1j'$ для всех $i, j > 1$.
Тогда из лемм~\ref{lem:sumcyc}.ac следует, что множество циклов 
    $$\widehat{ij'} := 
    \begin{cases}     
    12'3j',& i=3,~ j \neq 2;\\
    2'31'i,& i \neq 3,~ j=2; \\
    12'31'ij',& \text{иначе}
    \end{cases}$$ 
для ребер $ij'$ из $\t{K_n}$ таких, что $i,j>1$ и $(i,j)\ne(3,2)$, является базисом в $H_1(\t{K_n})$.

Поскольку $t\widehat{ij'} = \widehat{23'}+\widehat{ji'}$, то циклы
$$\t{K_3}=\widehat{23'},\quad \widehat{ij'},\quad t\widehat{ij'},\quad\text{где}\quad i>j>1 \quad\text{и}\quad (i,j)\ne(3,2),$$
образуют требуемый базис.
\end{proof}
 
% (В качестве базиса можно также взять циклы $\t{K_3}$, $1j'i2'$, $1'ji'2$, где $j>i>2$, и $1i'23'$, $1'i2'3$, где $i>3$.)

Пусть для графа $K$ задана симметрия (инволюция) $t\colon K\to K$, т.е. перестановка множества вершин, для которой 

$\bullet$ вершины, соединенные ребром, переходят в вершины соединенные ребром, и 

$\bullet$ $t(tv)=v$ для любой вершины $v$. 

Примеры: 

$\bullet$  симметрия $t$ графа $\t{K_n}$, определенная в начале этого параграфа; 

$\bullet$ \emph{антиподальная} симметрия цикла четной длины, отображающая каждую вершину в диаметрально противоположную.

Вершина $v$ называется \emph{неподвижной} (при симметрии $t$), если $v = tv$.
Через $tQ$ обозначим набор ребер, симметричный набору $Q$ ребер графа $K$.
В частности, $Q$ может быть одним ребром или $1$-циклом.
Назовем набор $Q$ ребер \emph{симметричным}, если $tQ = Q$.
 
\begin{proposition}\label{prop:inv} 
Пусть для симметрии $t$ связного графа $K$ с $V$ вершинами и $E$ ребрами нет неподвижных вершин. 
Обозначим через $I$ количество симметричных ребер. 
Тогда количество симметричных $1$-циклов равно 
$\begin{cases}2^{(E-V+2)/2},&\text{если }I=0,\\ 2^{(E-V+I)/2},&\text{если } I>0.\end{cases}$ 
\end{proposition}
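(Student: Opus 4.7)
The plan is to reduce counting symmetric $1$-cycles in $K$ to counting certain edge subsets of the quotient multigraph $L := K/t$, whose vertex set consists of the $V/2$ orbits $[v] := \{v, tv\}$ and whose edge set consists of the $t$-orbits of edges of $K$. The $I$ symmetric edges become loops of $L$, sitting at $I$ distinct vertices (since any two vertices span at most one edge), while the $E - I$ non-symmetric edges pair up into $(E - I)/2$ non-loop edges. Since $K$ is connected, so are $L$ and the loop-deleted submultigraph $L^0$.

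I next set up the bijection via the projection $\pi \colon E(K) \to E(L)$. Symmetric subsets of $E(K)$ are exactly the preimages $\pi^{-1}(D)$ of subsets $D \subseteq E(L)$, and the main subtle point of the proof is that the degree of $v \in V(K)$ in $\pi^{-1}(D)$ equals the number of edges of $D$ incident to $[v]$, with each loop counted as $1$ rather than the usual $2$. Indeed, each non-loop edge of $D$ at $[v]$ has exactly one preimage edge incident to $v$ (the other sits at $tv$), while each loop of $D$ at $[v]$ contributes the single symmetric edge $\{v, tv\}$, which has $v$ as an endpoint. Consequently $\pi^{-1}(D)$ is a $1$-cycle in $K$ iff for every $[v] \in V(L)$ this count is even.

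To count such $D$, I split $D = D_0 \sqcup D_\ell$ with $D_0 \subseteq E(L^0)$ and $D_\ell$ a subset of the $I$ loops. The parity condition reads: the odd-degree vertex set of $D_0$ equals the set of vertices carrying loops chosen in $D_\ell$, a set of size $|D_\ell|$. Using the standard fact that for a connected multigraph on $V'$ vertices with $E'$ edges the number of edge subsets with a prescribed odd-degree vertex set $S$ is $2^{E' - V' + 1}$ when $|S|$ is even and zero otherwise, I apply this to $L^0$: for any admissible $D_\ell$ there are $2^{(E-I)/2 - V/2 + 1}$ valid $D_0$. When $I = 0$ only $D_\ell = \emptyset$ is possible, yielding $2^{(E-V+2)/2}$. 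When $I > 0$, admissibility becomes ``$|D_\ell|$ even'', giving $2^{I-1}$ choices and total $2^{I - 1 + (E-I)/2 - V/2 + 1} = 2^{(E - V + I)/2}$, as claimed. The hard part is the degree computation: it is easy to slip into the multigraph convention and double-count loops, which would give an answer off by a factor of $2$ in the case $I > 0$.
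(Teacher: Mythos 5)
Your proof is correct, and it takes a genuinely different route from the paper's. The paper argues in two separate ways: for $I=0$ it forms the \emph{simple} quotient graph $K/t$, which only behaves well under the extra hypothesis that no vertex is adjacent to two mutually symmetric vertices, and it reaches that situation by first subdividing every edge (which preserves $E-V$ and the number of symmetric $1$-cycles); for $I>0$ it instead builds a symmetric spanning tree containing exactly one symmetric edge and transports the spanning-tree bijection of утверждение~\ref{pr:orinumber}.a (лемма~\ref{lem:sumcyc}) to symmetric objects, counting symmetric subsets of the non-tree edges. You avoid both devices by working from the start with the quotient \emph{multigraph}: loops record symmetric edges, parallel edges are kept, and the key observation that a loop contributes $1$ (not $2$) to the degree of each preimage vertex turns ``symmetric $1$-cycle in $K$'' into ``edge set $D_0\sqcup D_\ell$ of the quotient whose set of odd-degree vertices is exactly the set of vertices of chosen loops''. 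This handles $I=0$ and $I>0$ uniformly (the case split only enters when counting even subsets of the $I$ loops, giving the factors $1$ and $2^{I-1}$), and it makes the exponents transparent; your worked-out parity analysis of preimages is exactly right, including in the presence of parallel edges, which is precisely the situation the paper's condition ($*$) and subdivision trick are designed to sidestep. The price is that you invoke two facts not literally proved in the paper: the count $2^{E'-V'+1}$ of $1$-cycles, and more generally of edge sets with a prescribed even odd-degree set $S$, in a connected \emph{multigraph}. Both are standard and routine to supply — the spanning-tree argument of утверждение~\ref{pr:orinumber}.a extends verbatim to parallel edges, and the prescribed-parity count follows because such edge sets form a coset of the cycle space, nonempty when $|S|$ is even by taking a sum of paths pairing up $S$ — but in a self-contained text you should state and prove this lemma explicitly rather than cite it as folklore.
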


\begin{proof}[Набросок доказательства] {\it Случай $I = 0$.} Говоря нестрого, назовем \emph{факторграфом} графа $K$ по симметрии $t$ граф $K/t$, полученный из $K$ <<склеиванием>> симметричных вершин и симметричных ребер.
Строго говоря, вершинами графа $K/t$ являются неупорядоченные пары взаимно симметричных вершин графа $K$.
Ребро в $K/t$ между двумя такими парами проводится, если есть ребро в $K$ между некоторыми вершинами из обеих пар. 
Например, факторграфом графа $\t{K_n}$ по симметрии $t$ является граф $K_n$ (ср. с леммой \ref{pr:knbij}).

Пусть сначала 

($*$) ни одна вершина графа $K$ не соединена с двумя взаимно симметричными вершинами.

Тогда граф $K/t$ имеет $V/2$ вершин и $E/2$ ребер.
По утверждению \ref{pr:orinumber2}.a в $K/t$ ровно $2^{(E - V)/2 + 1}$ $1$-циклов.
Существует <<естественное>> взаимно однозначное соответствие между симметричными $1$-циклами в $K$ и $1$-циклами в $K/t$.
Значит, в $K$ столько же симметричных $1$-циклов.

Сведем общий случай к рассмотренному.
%Результатом \emph{подразделения} ребра $\sigma = ab$ графа $K$ назовем граф, полученный из $K$ добавлением вершины $v_\sigma$, ребер $av_\sigma$ и $v_\sigma b$, а также удалением ребра $\sigma$ (рис. \ref{f:subdivision} слева).
Обозначим через $K'$ граф, полученный из $K$ подразделением каждого ребра.
Обозначим через $V'$ и $E'$ количество вершин и ребер в $K'$ соответственно.
Поскольку $V' = V + E$ и $E' = 2E$, то $E' - V' = E - V$.
Определим симметрию $t'$ на графе $K'$ так: $t'$ совпадает с $t$ на вершинах графа $K$ и переводит вершину на ребре $\sigma$ графа $K$ в вершину на ребре $t\sigma$ графа $K$.  
%$t'v_\sigma = v_{t\sigma}$ для любого ребра $\sigma$ из $K$.
Подразделение ребра не меняет количество $1$-циклов.
Подразделение двух взаимно симметричных ребер не меняет количество симметричных $1$-циклов.
Поэтому количества симметричных $1$-циклов в $K$ и $K'$ совпадают.
Граф $K'$ удовлетворяет условию ($*$). 
Следовательно, в нем ровно $2^{(E' - V')/2 + 1} = 2^{(E - V)/2 + 1}$ симметричных $1$-циклов.

{\it Случай $I>0$.}
%Докажем в следующем абзаце, что ...
В $K$ существует симметричное остовное дерево $T$, содержащее ровно одно симметричное ребро.
Докажем это в следующем абзаце.

Выберем произвольное симметричное ребро $\tau$.
Построим $T$ итеративно.
Обозначим $T_0 = \{\tau\}$.
Если дерево $T_i$ является остовным, то положим $T = T_i$.
Иначе, поскольку $K$ связен, существует вершина $v$ вне $T_i$, которая соединена ребром $\sigma$ с некоторой вершиной дерева $T_i$.
Так как $v \neq tv$, то вершины $v$ и $tv$ являются листьями в $T_{i+1} = T_i \cup \{\sigma, t\sigma\}$, поэтому $T_{i+1}$ является деревом.
Дерево $T$ не содержит симметричное ребро $\sigma$, отличное от $\tau$, поскольку иначе ребра $\sigma$, $\tau$ и два взаимно симметричных пути от вершин ребра $\tau$ до вершин ребра $\sigma$ образуют $1$-цикл в $T$.

Возьмем взаимно однозначное соответствие из леммы \ref{l:bij1}.
При этом соответствии симметричные $1$-циклы в $K$  переходят в симметричные наборы ребер вне $T$ (ввиду следующего абзаца).

Ввиду симметричности дерева $T$ верно $\widehat {t\sigma} = t\widehat{\sigma}$ для любого ребра $\sigma \in K \setminus T$.
Возьмем симметричный $1$-цикл $C$ в $K$.
Поскольку $C$ и $T$ симметричны, $\sigma \in C \setminus T$ тогда и только тогда, когда $\ t\sigma \in C \setminus T$.

Поскольку в $T$ ровно одно симметричное ребро, то вне $T$ ровно $I - 1$ симметричных ребер.
Симметричные наборы ребер вне $T$ разбиваются на $I - 1$ одноэлементных наборов и 
\linebreak
$(E - V + 1 - (I - 1)) / 2$ пар взаимно симметричных ребер.
Таким образом, количество симметричных наборов ребер вне $T$ равно $2^{(I - 1) + (E - V + 1 - (I - 1)) / 2)} = 2^{(E - V + I)/2}$.
\end{proof}

Более продвинутый материал об инволюциях см., например, в \cite[\S7]{Sk20}.

%Остаток этого пункта не используется далее.

\begin{proof}[Набросок сведения случая $I = 0$ утверждения \ref{prop:inv} к случаю $I > 0$]
Возьмем произвольную вершину $v$ графа $K$. 
Обозначим через $K'$ граф, полученный из графа $K$ добавлением симметричного ребра $\{v,tv\}$, с инволюцией $t$. 
Множества симметричных $1$-циклов в $K'$ и в $K$ совпадают.
Для доказательства этого утверждения достаточно проверить, что в $K'$ нет симметричных $1$-циклов, содержащих симметричное ребро.
Докажем это в следующем абзаце.

В любом связном $1$-цикле $C$ есть эйлеров цикл --- цикл, проходящий по всем ребрам $1$-цикла $C$ ровно по одному разу.
Разобьем вершины симметричного $1$-цикла $C$ на два взаимно симметричных множества $U$ и $tU$.
При обходе по эйлеровому циклу в $C$ вершина меняет принадлежность множеству $U$ или $tU$ четное число раз.  
Значит, количество ребер $1$-цикла $C$ между $U$ и $tU$ четно.
При этом несимметричные ребра между $U$ и $tU$ разбиваются на взаимно симметричные пары.
Поэтому количество симметричных ребер между $U$ и $tU$ четно.
Значит, связный симметричный $1$-цикл не может содержать ровно одно симметричное ребро. 
\end{proof}

}

%%%%%%%%%%%%%%%%%%%%%%%%%%
\subsection{Геометрическое отступление: декартово произведение графов *}\label{s:prod}

В этом параграфе мы приведем конструкции <<двумерных>> объектов. 
Формально, эти конструкции не используются далее, ибо излагаются на языке <<одномерном>> (\S\ref{s:1squ}-\S\ref{s:1delsqu}) или <<комбинаторном>> (\S\ref{s:cydepr}-\S\ref{s:2symsqu}). 
Но без знакомства с этими конструкциями такой язык может показаться недостаточно естественным. 

Возьмем в трехмерном пространстве $n$ прямоугольников $XYB_kA_k,~ k = 1, \ldots , n$, любые два из которых пересекаются только по отрезку $XY$. 
\emph{Книжкой с $n$ листами} называется объединение этих прямоугольников (рис. \ref{f-3pagesbook} для $n = 3$).

\short{
\begin{figure}[ht]
\centerline{
\includegraphics[width=3cm]{pict-25.eps}
}
\caption{Книжка с тремя листами} 
\label{f-3pagesbook}
\end{figure}
}

\begin{example}\label{p:realbook}
Любой граф можно нарисовать без самопересечений на книжке с некоторым количеством листов, зависящим от графа. 
Более строго, для любого~$n$ существуют целое число~$k$, а также $n$~точек и $n(n-1)/2$~таких несамопересекающихся ломаных на книжке с $k$~листами, что 
 
$\bullet$ каждая пара точек соединена ломаной, и 
 
$\bullet$ никакая ломаная не пересекает внутренность другой ломаной.
\end{example} 

\begin{proof}[Построение]
Построим нужные точки и ломаные на книжке с $n$ листами.
Обозначим  через $XY$ отрезок пересечения листов.
Поместим вершину $V_i$ на $i$-й лист вне отрезка $XY$, $i \in [n]$.
На $XY$ выберем $n(n-1)/2$ различных точек $A_{\{i,j\}}$, $i, j \in [n]$.
Ребро $e_{\{i, j\}}$ изобразим в виде двузвенной ломаной $V_iA_{\{i,j\}}V_j$.
Никакое звено ломаной не пересекает внутренность никакого звена другой ломаной, так как либо эти звенья лежат на различных листах, либо имеют общую вершину.
\end{proof}

Сначала мы дадим интуитивное определение цилиндра, а затем строгое.

Возьмем вершины $a$ и $b$ в графе $K$, соединенные ребром. Возьмем прямоугольник-ленточку $aa'b'b$, соответствующую этому ребру.
Склеим концевые отрезки ленточек, соответствующие одной и~той же вершине, так, чтобы штрихованные буквы склеивались со штрихованными.
Полученная двумерная фигура называется \emph{цилиндром} над графом~$K$.

Напомним, что $\R^d$ "--- это $d$-мерное евклидово пространство (для $d=2$ и $d=3$ это обычные плоскость и пространство, которые изучаются на уроках геометрии). 
\emph{Цилиндром} над подмножеством $U\subset\R^d$ называется 
$$U\times K_2:=\{(x,t)\in\R^{d+1}\ :\ x\in U,\ t\in[\,0,1\,]\}.$$

Например, цилиндры над $K_2$, $K_3$ и $K_5$ показаны на рис.~\ref{f-mnre}; цилиндр над $K_{3, 1}$ показан на рис. \ref{f-3pagesbook}; цилиндр над $K_{k,1}$ <<выглядит>> как книжка с $k$ листами. 

\short{
\begin{figure}[ht]
    \centerline{\includegraphics[width=3cm]{real-83.mps}\qquad
    \includegraphics[width=3.5cm]{real-23.mps}\qquad
    \includegraphics[width=5cm]{k5i3labels.eps}
    }
    \caption{Цилиндры над $K_2$, $K_3$, $K_5$} 
    \label{f-mnre}
\end{figure}
}

\begin{example}\label{p:realcyl} 
    Цилиндр над любым графом <<реализуем без самопересечений>> в $\R^3$.
\end{example}

\begin{proof}[Набросок построения]
    Возьмем точки $A_{11},\ldots,A_{1n}\in\R^3$, никакие четыре из которых не лежат в одной плоскости. Возьмем вектор $v$, не параллельный никакой плоскости, проходящей через любые три из этих точек. Для каждого $p\in[n]$ возьмем точку $A_{2p}$, для которой $\vec{A_{1p}A_{2p}}=v$. Если $v$ достаточно мал, то точки $A_{jp}$, $j\in\{1,2\}$, $p\in[n]$,"--- искомые.
\end{proof}

Для подмножеств $U,V\subset\R^d$ их 
%(геометрическим)
\emph{произведением} называется
$$U\times V:=\{(x,y)\in\R^{2d}\ :\ x\in U,\ y\in V\}.$$
В частности, \emph{квадрат} подмножества $U\subset\R^d$ "--- это $U^2=U\times U$.  
Например, квадрат $K_2^2$ "--- это обычный квадрат на плоскости, а квадрат $K_3^2$ (называемый \emph{тором}) показан на рис.~\ref{f:k5i}, в центре.
%Изображение квадрата $K^2_{3,1}$ в $\R^3$ с самопересечениями показано на рис.~\ref{f:k3,1^2,1}. См. \cite[\S4.3, доказательство примера 4.4.a]{Sk14}.

\short{
\begin{figure}[ht]
\centerline{\includegraphics[width=5cm]{K_3_1-v1.eps}}
\caption{Изображение квадрата $K^2_{3,1}$ с самопересечениями в трехмерном пространстве}
\label{f:k3,1^2,1}
\end{figure}
}

 \begin{pr}[загадка]\label{p:cyl} 
 Нарисуйте в $\R^3$ произведение $K_3\times K_{3,1}$ без самопересечений.
 \end{pr}

\begin{remark} \label{r:body}
(a) Иногда термин <<граф>> используется для понятия <<тело графа>>, определяемого следующим образом.
Пусть некоторое подмножество в $\R^3$ находится во взаимно однозначном соответствии с множеством вершин графа $K$, причем ни один отрезок, соответствующий какому-либо ребру графа $K$, не пересекает внутренность никакого другого такого отрезка. 
\textit{Телом} графа $K$ является объединение этого подмножества и всех таких отрезков. 

(b) См. обобщения, например, в \cite[\S6]{Sk}. 
\end{remark}

%%%%%%%%%%%%%%%%%%%%%%%%%%%%%%%
\talkonly{\newpage}
\subsection{\talkno{Одномерные циклы}\talkonly{$1$-циклы} в произведении графов}\label{s:1squ}

Основной результат этого пункта "--- теорема Кюннета \ref{stcycles2} о классификации $1$-циклов с точностью до прибавления \emph{границ} в \emph{произведении} графов. 
Остальной материал этого пункта подводит к ее формулировке и доказательству. 
Сама теорема Кюннета \ref{stcycles2} используется далее только в доказательстве теоремы \ref{p:stcycles}. 

Назовем \textit{конфигурацией} пару жуков (красного и синего), расположенных в вершинах красного и синего графов.  
Назовем две конфигурации \textit{смежными}, если одна из них может быть получена из другой перемещением одного из жуков вдоль какого-либо (одного) ребра.

Иными словами, рассмотрим множество (конфигурационное пространство) упорядоченных пар $(x,y)$ точек $x$ и $y$ графов $K$ и $L$ (точнее, их тел, см. замечание \ref{r:body}.a), хотя бы одна из которых является вершиной.
Это множество является объединением конечного числа отрезков, т.~е. телом некоторого графа.
Приведем прямое комбинаторное определение этого графа.

Вершинами \textbf{$\Box$-произведения} $K\Box L$ графов $K$ и $L$ являются упорядоченные пары $(a,b)$ вершин $a$ графа $K$ и $b$ графа $L$, обозначаемые $a \Box b$.
Если вершины $b$ и $c$ графа $L$ соединены ребром, то вершины $a \Box b$ и $a \Box c$ графа $K\Box L$ соединены ребром, обозначаемым $a \Box bc$. 
Если вершины $b$ и $c$ графа $K$ соединены ребром, то вершины $b \Box a$ и $c \Box a$ графа $K\Box L$ соединены ребром, обозначаемым $bc \Box a$.  
Других ребер в $K\Box L$ нет.
Обозначим $K^{\square2}=K\Box K$. 
 
\talkonly{\newpage} 
\begin{figure}[!hbt]
%!htb
    \begin{minipage}[t]{0.33\textwidth}
	\centering
	\includegraphics[width=2cm]{grid-3-3.eps}
    \end{minipage}%\hfill
    \begin{minipage}[t]{0.33\textwidth}
	\centering
	\includegraphics[width=5cm]{real-33.mps}
    \end{minipage}%\hfill
    \begin{minipage}[t]{0.33\textwidth}
	\centering
        \includegraphics[width=4cm]{K_3_1-v2-1.eps}
    \end{minipage}\hfill
    %\captionsetup{width=100pt}
    \caption{Слева: $K^{\square 2}_{2,1}$, т.~е. сеточный граф $3\times 3$. 
    В центре: $K_3^{\square2}$ на торе $K_3^2$
    (граничные циклы затянуты серыми четырехугольниками).
    Справа: $K^{\square 2}_{3,1}$}
        \label{f:grid-3-3}
        \label{f:k5i}
        \label{f:k3,1^2,2}
\end{figure}

Например, 

$\bullet$ если $K=K_2$ --- отрезок, то $K^{\square 2}$ "--- цикл длины $4$;

$\bullet$ если $K=K_{2,1}$ --- путь на трех вершинах, то $K^{\square 2}$ "--- сеточный граф $3\times 3$ на рис~\ref{f:grid-3-3} слева; 

$\bullet$ если $K=K_3$ --- цикл на трех вершинах, то $K^{\square 2}$ "--- граф на рис. \ref{f:k5i} в центре; его можно получить из графа $K_{2,1}^{\square2}$ добавлением ребер  между соответствующими вершинами 1-й и 3-й строки, а также 1-го и 3-го столбцов 
(сравните с получением графа $K_3$ из пути~$123$ добавлением ребра $13$);

$\bullet$ если $K=K_{3,1}$ --- <<триод>>, то $K^{\square 2}$ "--- граф на рис.~\ref{f:k3,1^2,2} справа;

$\bullet$  вершины графа $K_4^{\square 2}$ "--- узлы сетки~$4\times 4$; 
две вершины соединены ребром тогда и только тогда, когда они находятся в одной строке или в одном столбце.

\talkno{Дополнительную информацию о произведении $K\Box L$ см. в \cite{CPG}.}

%См.~также~\url{https://mathworld.wolfram.com/GeneralizedQuadrangle.html} 
%($K_3^{\square 2} = GQ(2,1)$). Олегу: связь не ясна

\talkonly{\newpage} 

Простой цикл в графе $K\Box L$ будем обозначать перечислением его вершин через запятую.
(Это обозначение отличается от того, что было дано во введении.)

\emph{Границей} называется простой цикл 
$$ab\Box uv := a\Box u,\ b \Box u,\ b \Box v,\ a \Box v$$
для ребер $ab$ и $uv$ в $K$ и $L$ (произведение $ab\times uv$ является прямоугольником,\talkno{ см. \S\ref{s:prod},} а $ab\Box uv=\partial(ab\times uv)$ является его границей). 

Во многих применениях полезно отождествить 1-циклы, отличающиеся на сумму нескольких границ, см., например, \cite[лемма 2.5 и \S7]{ABM+}. 

Оказывается, любой $1$-цикл в $K_{2,1}^{\square 2}$ и в $K_{3,1}^{\square 2}$ является суммой нескольких границ (попробуйте доказать!).  

\begin{proposition}\label{p:treek2} Если $K$ и $L$ "--- деревья, то любой $1$-цикл в $K\Box L$  является суммой нескольких границ.  
%(a) в $K_{2,1}^{\square 2}$; \quad (b) в $K_{3,1}^{\square 2}$; \quad (c) в $K\Box L$, где  $K$ и $L$ "--- деревья. 
\end{proposition}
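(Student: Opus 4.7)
План доказательства. Применю индукцию по числу $v(K)$ вершин дерева $K$. В базе $v(K)=1$ граф $K\Box L$ изоморфен дереву $L$, поэтому по утверждению~\ref{pr:orinumber}.b содержит только пустой $1$-цикл, который является суммой нуля границ.

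Шаг индукции. Возьму лист $a\in K$ с единственным соседом $a'$ и обозначу через $K'$ дерево, полученное удалением из $K$ вершины $a$ и ребра $aa'$. Назову <<слоем над $a$>> множество вершин $\{(a,b):b\in L\}$ графа $K\Box L$; инцидентные им рёбра бывают двух типов --- вертикальное $(aa',b)$ и горизонтальные $(a,bc)$ по рёбрам $bc\in L$, содержащим $b$. Для произвольного $1$-цикла $C$ в $K\Box L$ положу
\[
\widehat C \ :=\ C + \sum_{\substack{bc\in L \\ (a,bc)\in C}} \bigl(aa'\Box bc\bigr).
\]
Идея: показать, что $\widehat C$ не использует вершин слоя над $a$, то есть является $1$-циклом подграфа $K'\Box L \subset K\Box L$; тогда по предположению индукции $\widehat C$ раскладывается в сумму границ, а значит, и $C=\widehat C+\sum aa'\Box bc$ тоже является суммой границ.

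Отсутствие горизонтальных рёбер $(a,bc)$ в $\widehat C$ следует прямо из определения $\widehat C$. Основным препятствием плана является проверка отсутствия в $\widehat C$ и вертикальных рёбер $(aa',b)$. Это вытекает из того, что каждая из прибавленных границ $aa'\Box bc$ содержит вертикальное ребро $(aa',v)$ ровно для двух вершин $v$, являющихся концами ребра $bc$. Поэтому коэффициент ребра $(aa',b)$ в $\widehat C$ по модулю~$2$ равен
\[
\bigl[(aa',b)\in C\bigr] + \#\{c\in L : bc\in L,\ (a,bc)\in C\}.
\]
А эта сумма чётна в силу условия чётности степени вершины $(a,b)$ в $1$-цикле $C$ (второе слагаемое есть число горизонтальных, а первое --- число вертикальных рёбер цикла $C$, инцидентных $(a,b)$). Таким образом, $(aa',b)\notin\widehat C$, и индукционный шаг завершён.
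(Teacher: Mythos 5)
Ваше доказательство верно и по существу совпадает с подходом, указанным в статье: индукция с удалением висячей вершины (листа) дерева, при которой горизонтальные рёбра слоя над листом «сдвигаются» прибавлением границ $aa'\Box bc$, а чётность степени вершины $(a,b)$ в $1$-цикле гарантирует исчезновение и вертикальных рёбер. Пробелов нет.
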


Это доказывается по индукции с использованием удаления висячей вершины.

%$K$ "--- дерево, Триодический цикл в $K_{3,1}^{\square 2}$ является суммой нескольких границ. 
% равен $\sum\limits_{i,j\in [3], i \not = j} i1'\Box j1'$. 

\talkonly{\newpage} 
\begin{figure}[!hbt]\centering
%!htb
    \begin{minipage}[t]{0.4\textwidth}
	\centering
    \includegraphics[width=6cm]{diag-tiled.eps}
    \end{minipage}%\hfill
    \begin{minipage}[t]{0.3\textwidth}
	\centering
	\includegraphics[width=3.5cm]{outdiag.eps}
    \end{minipage}%\hfill
    \begin{minipage}[t]{0.3\textwidth}
	\centering
        \includegraphics[width=3.5cm]{antidiag.eps}
    \end{minipage}\hfill
    %\captionsetup{width=100pt}
    \caption{Слева: диагональный цикл, симметризованный цикл, их разность "--- сумма заштрихованных границ (наглядное обоснование утверждения \ref{p:diag}.a для $a\in C$). 
    В центре: околодиагональный цикл. Справа: антидиагональный цикл}
\label{f:diag}
\end{figure}

Вот примеры простых циклов в $K_3^{\square 2}$: \emph{диагональный}, \emph{околодиагональный},  \emph{антидиагональный} циклы
$$1\Box1,\ 1\Box2,\ 2\Box2,\ 2\Box3,\ 3\Box3,\ 3\Box1;\qquad 1\Box2,\ 1\Box3,\ 2\Box3,\ 2\Box1,\ 3\Box1,\ 3\Box2;$$ $$1\Box1,\ 2\Box1,\ 2\Box3,\ 3\Box3,\ 3\Box2,\ 1\Box2.$$ 

Вот примеры простых 
циклов в $K^{\square 2}$ для вершины $a$ и непустого простого цикла $C=v_1\ldots v_k$ в $K$ (рис. \ref{f:diag}):   

$\bullet$ \textbf{левый и правый} циклы 
$a\Box C:=a \Box v_1,\ \ldots,\ a\Box v_k \quad\text{и}\quad C\Box a:=v_1\Box a,\ \ldots,\ v_k \Box a$;  

$\bullet$ \textbf{симметризованный} цикл $a\Box C+C\Box a$; 

$\bullet$ \textbf{диагональный} цикл
$\diag C:=v_1\Box v_1,\ v_1\Box v_2,\ v_2\Box v_2,\ \ldots ,\ v_k\Box v_k,\ v_k\Box v_1;$  

$\bullet$ \textbf{околодиагональный} цикл
$v_1\Box v_2,\ v_1\Box v_3,\ v_2\Box v_3,\ \ldots,\ v_k\Box v_1,\ v_k\Box v_2;$

$\bullet$ \textbf{антидиагональный} цикл
$v_1\Box v_1,\ v_2\Box v_1,\ v_2\Box v_k,\ \ldots ,\ v_k\Box v_2,\ v_1\Box v_2.$ 

\begin{pr}\label{p:diag} (a) Диагональный, \quad (b) околодиагональный, \quad (c) антидиагональный

цикл является суммой нескольких границ и симметризованного цикла.   
\end{pr}

\talkonly{\newpage} 
\begin{pr}\label{p:nobo}
    (a) Никакой ненулевой левый цикл не является суммой нескольких границ. \quad 
    
    (b) Никакой ненулевой диагональный цикл не является суммой нескольких границ. 
    
    (с) Никакой ненулевой левый цикл не является суммой нескольких диагональных циклов и границ. 
\end{pr}

Это вытекает из (очевидной) леммы \ref{p:probou}. 
 
\emph{Левой проекцией $C_y$ по модулю $2$} для $1$-цикла $C$ в $K\Box L$ называется множество всех ребер~$\sigma$ в~$L$, таких что имеется нечетное количество вершин~$a$ в~$K$, для которых $a \Box \sigma \in C$. 
\emph{Правая проекция $C_x$ по модулю $2$} определяется аналогично.

\begin{lemma}\label{p:probou} 
    Левая (и правая) проекция по модулю $2$ любой границы (и, следовательно, любой суммы границ) пуста.
\end{lemma}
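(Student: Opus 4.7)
План доказательства: я разобью доказательство на две простые части --- проверку аддитивности отображения проекции и прямое вычисление на одной границе.

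Сначала я установлю, что отображение $C\mapsto C_y$ согласовано с операцией сложения: для любых $1$-циклов $C_1,C_2$ в $K\Box L$ верно $(C_1+C_2)_y=(C_1)_y+(C_2)_y$. Для этого для фиксированного ребра $\sigma\in L$ положу $A_i:=\{a\in V(K)\colon (a,\sigma)\in C_i\}$. Тогда $\{a\colon (a,\sigma)\in C_1+C_2\}=A_1+A_2$, и так как $|A_1+A_2|\equiv|A_1|+|A_2|\pmod 2$, четность числа «прообразов» $\sigma$ аддитивна. Значит, $\sigma\in(C_1+C_2)_y$ тогда и только тогда, когда $\sigma$ принадлежит ровно одной из проекций $(C_1)_y,(C_2)_y$, что и означает требуемую аддитивность.

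Далее я непосредственно вычислю левую проекцию границы $ab\Box uv$. Из четырёх её рёбер только два являются рёбрами «$L$-типа» $(x,uv)$, а именно $(a,uv)$ и $(b,uv)$; остальные два ребра, $(ab,u)$ и $(ab,v)$, имеют тип $(K,L$-вершина$)$ и никакого вклада в $C_y$ не дают. Значит, для $\sigma=uv$ число вершин $x\in V(K)$ с $(x,\sigma)\in ab\Box uv$ равно двум (а именно, $x=a$ и $x=b$) --- чётно, а для любого другого ребра $\sigma\in L$ это число равно нулю. Следовательно, $(ab\Box uv)_y=\emptyset$, и аналогично $(ab\Box uv)_x=\emptyset$.

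Заключительный шаг: любая сумма границ имеет пустую левую (и правую) проекцию по модулю $2$ в силу аддитивности из первого шага и равенства $(ab\Box uv)_y=\emptyset$ из второго шага. Основного препятствия здесь нет: всё доказательство сводится к аккуратной распаковке определений. Единственный момент, требующий внимания, --- это проверка аддитивности, которая, впрочем, представляет собой стандартное рассуждение о чётностях. Поэтому лемму естественно считать практически очевидной, что и отмечено в тексте.
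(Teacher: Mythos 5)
Ваше доказательство верно и представляет собой именно ту прямую распаковку определений (аддитивность проекции плюс подсчёт: ровно два ребра границы $ab\Box uv$ проецируются на $uv$), которую текст подразумевает, оставляя лемму без доказательства как очевидную. Расхождений с подходом статьи нет.
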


\talkonly{\newpage} 
Количество $1$-циклов \emph{по модулю границ} (или \emph{с точностью до прибавления границ}) "--- это максимальное количество $1$-циклов в наборе $1$-циклов, ни один из которых не является суммой каких-либо других $1$-циклов этого набора и нескольких границ.

Два $1$-цикла $C,~C'$ в $K\Box L$ называются \emph{гомологичными} (или сравнимыми по модулю границ), если $C+C'$ является суммой нескольких границ. 
Обозначение: $C\sim C'$. 
Количество $1$-циклов по модулю границ есть количество классов гомологичности $1$-циклов. 
  
\begin{pr}\label{p:k21} Найдите количество классов гомологичности $1$-циклов в

(a)  $K_3^{\square 2}$; \quad (b)  $K_{2,2}^{\square 2}$; \quad (c)  $K_{2,3}^{\square 2}$; \quad (d) $K_4^{\square 2}$. 
\end{pr}

\talkno{Для некоторых пунктов задачи \ref{p:k21} полезна теорема Кюннета \ref{stcycles2}.b.}
 
\talkonly{\newpage} 
\begin{pr}\label{p:stcycles2} Пусть $T$ "--- дерево. 

(a') Любой $1$-цикл в $K\Box T$ является суммой правого цикла и нескольких границ. 

(a) Для любых $1$-цикла $C$ в $K\Box T$ и вершины $a$ в $T$ 

\quad(a1) существует единственный $1$-цикл $C_K$ в~$K$, такой что $C \sim C_K \Box a$;

\quad(a2) $C\sim C_x\Box a$.

(b) Количество классов гомологичности $1$-циклов в $K\Box T$ равно количеству $1$-циклов в $K$. 
\end{pr}

Это доказывается индукцией по количеству вершин в $T$ с использованием удаления висячей вершины. 
Кроме того, п.~(b) и существование в (a1) следуют из (a2); единственность в (a1) следует из леммы \ref{p:probou}.
На примере решения этой задачи читатель увидит (без технических деталей) основную идею доказательства теоремы Кюннета \ref{stcycles2}.  
 
\talkonly{\newpage} 
\begin{theorem}[Кюннет]\label{stcycles2}
Пусть графы $K$ и $L$ связны. 

(a') Любой $1$-цикл в $K\Box L$ является суммой правого цикла, левого цикла и нескольких границ. 

(a) Для любых $1$-цикла $C$ в $K\Box L$ и вершин $a,b$ в $K,L$ 

\quad(a1) существуют единственные $1$-циклы $C_K$ и $C_L$ в~$K$ и $L$ такие, что $C\sim C_K\Box b+a\Box C_L$;
 
\quad(a2) $C\sim C_x\Box b + a\Box C_y$.  
 
(b) Количество классов гомологичности $1$-циклов в $K\Box L$ равно произведению количества $1$-циклов в $K$ на количество $1$-циклов в $L$.
\end{theorem}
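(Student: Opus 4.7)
План сводит теорему~\ref{stcycles2} к следующей лемме: \emph{если у $1$-цикла $D$ в $K\Box L$ обе проекции $D_x$ и $D_y$ нулевые, то $D$ --- сумма границ}. Перед применением леммы я бы проверил, что $C_x$ --- $1$-цикл в $K$ (и аналогично $C_y$ в $L$). Обозначим $h(v,b):=|\{\sigma\in E(K):v\in\sigma,\ (\sigma,b)\in C\}|$ и $e(v,b):=|\{\tau\in E(L):b\in\tau,\ (v,\tau)\in C\}|$. По определению $\deg_{C_x}(v)\equiv\sum_b h(v,b)\pmod2$ (это получается перестановкой сумм). Так как $\deg_C(v,b)=h(v,b)+e(v,b)$ чётно, то $h(v,b)\equiv e(v,b)\pmod2$. Сумма $\sum_b e(v,b)$ считает пары $(b,\tau)$ с $b\in\tau$ и $(v,\tau)\in C$; каждое ребро $\tau$ с $(v,\tau)\in C$ даёт в неё вклад $2$, так что эта сумма чётна, откуда $\deg_{C_x}(v)\equiv0\pmod2$.

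Лемму я доказал бы индукцией по $|E(L)\setminus T_L|$ для остовного дерева $T_L\subset L$. База: $L$ --- дерево, и утверждение~\ref{p:stcycles2}.a2 даёт $D\sim D_x\times b=0$. Шаг: выберем ребро $\tau=uv\in L\setminus T_L$ и рассмотрим $S:=\{a\in V(K):(a,\tau)\in D\}$. Условие $D_y(\tau)=0$ даёт $|S|\equiv0\pmod2$, поэтому $S$ разбивается на пары. Для каждой пары $\{a_1,a_2\}\subset S$ возьмём путь $a_1=x_0,x_1,\ldots,x_k=a_2$ в связном $K$ и прибавим к $D$ сумму границ $\sum_{i=1}^k x_{i-1}x_i\Box\tau$; в этой сумме каждая внутренняя вершина $x_i$ переключается в <<слое $\tau$>> дважды, поэтому переключатся лишь $(a_1,\tau)$ и $(a_2,\tau)$. Обработав все пары, получим $1$-цикл $D'$ без рёбер вида $(\cdot,\tau)$, то есть $D'$ лежит в $K\Box L'$, где $L':=L-\tau$. По лемме~\ref{p:probou} проекции у $D'$ по-прежнему нулевые, так что применимо предположение индукции, и $D'$ --- сумма границ.

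Выведем теорему. Для (a2) положим $D:=C+C_x\times b+a\times C_y$. Прямым подсчётом $D_x=C_x+C_x+0=0$ (проекция $a\times C_y$ на $K$ пуста: её рёбра имеют вид $(a,\tau)$, а не $(\sigma,\cdot)$ для $\sigma\in E(K)$) и аналогично $D_y=0$; лемма даёт $D\sim0$. Существование в (a1) и (a') следуют сразу (с $C_K:=C_x$, $C_L:=C_y$, а для (a') представляем границу как нуль). Единственность в (a1): если $(C_K+C'_K)\times b+a\times(C_L+C'_L)$ --- сумма границ, то проецирование на $K$ и на $L$ по лемме~\ref{p:probou} даёт $C_K+C'_K=0$ и $C_L+C'_L=0$. Пункт (b) получается из (a1): отображение $(C_K,C_L)\mapsto[C_K\times b+a\times C_L]$ --- биекция между $H_1(K)\times H_1(L)$ и множеством классов гомологичности в $K\Box L$.

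Главное препятствие --- индуктивный шаг вспомогательной леммы: нужно аккуратно проверить <<телескопическое>> сведение $S$ к пустому и убедиться, что дополнительные <<горизонтальные>> рёбра $(x_{i-1}x_i,u)$ и $(x_{i-1}x_i,v)$, возникающие при добавлении границ $x_{i-1}x_i\Box\tau$, не разрушают индукцию; последнее обеспечено тем, что эти добавки --- суммы границ, а значит, по лемме~\ref{p:probou} сохраняют обе проекции. Дополнительный (но рутинный) пункт --- корректность определения самих проекций $C_x, C_y$ как $1$-циклов и их линейность по $C$.
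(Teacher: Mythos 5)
Your proof is correct, and its overall skeleton coincides with the paper's: both reduce the theorem to the key fact that a $1$-cycle with zero left and right projections is a sum of boundaries, apply it to $Z=C+C_x\times b+a\times C_y$ to get (a2), and obtain uniqueness in (a1) via Lemma~\ref{p:probou}; your verification that $C_x,C_y$ are $1$-cycles and the deduction of (a'), (a1), (b) match the paper's sketch. The genuine difference is in how the auxiliary lemma is proved. The paper proves the stronger one-sided Lemma~\ref{l:zerpro} ($Z_x=0$ already implies $Z\sim a\times Z_y$) by a self-contained double induction: a main induction on the number of pairs of edges of $Z$ lying over the same edge of $K$, and a nested induction on the number of vertices of $K$ whose fiber meets $Z$. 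You instead prove only the two-sided special case, by induction on $|E(L)\setminus T_L|$ for a spanning tree $T_L$: the base case ($L$ a tree) is delegated to Proposition~\ref{p:stcycles2}.a2, and the step clears the fiber over an extra edge $\tau$ by adding boundaries $x_{i-1}x_i\Box\tau$ along paths in $K$ — the same ``path of boundaries'' move the paper uses, and your telescoping and the preservation of zero projections (via Lemma~\ref{p:probou}) are handled correctly, including the fact that $L-\tau$ stays connected and that boundaries of $K\Box L'$ are boundaries of $K\Box L$. Your route is somewhat shorter but leans on Proposition~\ref{p:stcycles2} (itself only sketched in the text, by leaf removal), whereas the paper's lemma is independent of it and yields the slightly more flexible statement $Z\sim a\times Z_y$; both are legitimate, since \ref{p:stcycles2} precedes the theorem.
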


\begin{proof}[Набросок доказательства]
    По следующей лемме \ref{l:zerpro}, если $Z_x=Z_y=0$, то $Z\sim0$.
    Применяя ее к
    $Z = C + C_x\Box b + a\Box C_y$, получаем (a2). 
    
    П.~(b) и существование в (a1) следуют из (a2); единственность в (a1) следует из леммы \ref{p:probou}.
\end{proof}

\begin{lemma}\label{l:zerpro} 
    Если графы $K$ и $L$ связны, $Z$ является $1$-циклом в $K\Box L$ и $Z_x=0$, то $Z\sim a\Box Z_y$ для любой вершины $a$ в $K$.
\end{lemma}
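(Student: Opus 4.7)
The plan is to prove the lemma in four stages: first reduce $Z$ to a homologous $1$-cycle $Z'$ containing no ``horizontal'' edges (those of the form $(\sigma,b)$ with $\sigma$ an edge of $K$ and $b$ a vertex of $L$); second, decompose $Z'$ as a sum of ``vertical'' fiber cycles $b\times D_b$ indexed by vertices $b$ of $K$; third, use connectivity of $K$ to move each fiber cycle to the fiber over $a$; and fourth, identify $\sum_b D_b$ with $Z_y$ via lemma \ref{p:probou}.

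For step one, fix an edge $\sigma=uv$ in $K$ and let $S_\sigma$ be the set of vertices $b$ in $L$ with $(\sigma,b)\in Z$. The hypothesis $Z_x=0$ says $|S_\sigma|$ is even, so I partition $S_\sigma$ into pairs $\{b_{2i-1},b_{2i}\}$. Using connectivity of $L$, I choose a path $P_{\sigma,i}$ in $L$ from $b_{2i-1}$ to $b_{2i}$ for each pair, and form $B_\sigma:=\sum_i\sum_{\tau\in P_{\sigma,i}}\sigma\Box\tau$. The horizontal edges contributed by $B_\sigma$ telescope along each path $P_{\sigma,i}$ to give exactly $(\sigma,b_{2i-1})+(\sigma,b_{2i})$, so they sum to $\sum_{b\in S_\sigma}(\sigma,b)$, matching the horizontal part of $Z$ over $\sigma$; the only other edges in $B_\sigma$ are vertical, lying in the fibers over $u$ and $v$. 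Setting $Z':=Z+\sum_\sigma B_\sigma$ (over all edges $\sigma$ of $K$) gives $Z'\sim Z$ and $Z'$ has no horizontal edges; the sums for distinct $\sigma$ do not interfere, since the horizontal part of $\sigma\Box\tau$ lives over $\sigma$ alone.

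For step two, because $Z'$ is purely vertical, $Z'\cap(\{b\}\times L)=b\times D_b$ for some set of edges $D_b$ of $L$, and the $1$-cycle condition for $Z'$ at each vertex $(b,v)$ collapses to the $1$-cycle condition for $D_b$ at $v$. Hence each $D_b$ is a $1$-cycle in $L$ and $Z'=\sum_b b\times D_b$. For step three, for each $b$ I choose a path $a=c_0,c_1,\ldots,c_m=b$ in $K$; for each consecutive pair, $\sum_{\tau\in D_b}(c_jc_{j+1})\Box\tau=c_j\times D_b+c_{j+1}\times D_b$, because the horizontal contribution over $c_jc_{j+1}$ vanishes by the $1$-cycle property of $D_b$. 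Iterating yields $b\times D_b\sim a\times D_b$, so summing over $b$ gives $Z'\sim a\times\sum_b D_b$.

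Finally, lemma \ref{p:probou} implies that the left projection is invariant under $\sim$, so $Z'_y=Z_y$. Since $Z'$ has only vertical edges, $\tau\in Z'_y$ precisely when an odd number of vertices $b$ of $K$ satisfy $\tau\in D_b$, whence $Z'_y=\sum_b D_b$. Combining the chain $Z\sim Z'\sim a\times\sum_b D_b=a\times Z_y$ completes the proof. The main obstacle is the delicate bookkeeping in step one: one must verify both that the telescoping pairings exactly reproduce the horizontal part of $Z$ over each $\sigma$, and that modifications at different edges of $K$ do not interact, so that after the cleanup only vertical edges survive.
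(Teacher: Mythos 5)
Your proof is correct and is essentially the paper's argument: both cancel the horizontal edges over each edge $\sigma$ of $K$ by adding boundaries $\sigma\Box\tau$ along paths in $L$ (possible because $Z_x=0$ pairs these edges up), and then move the resulting vertical fiber cycles into the single fiber over $a$ by adding boundaries along paths in $K$, the horizontal contributions vanishing since each fiber cycle is a $1$-cycle in $L$. The only difference is organizational: the paper runs these two moves as a double induction (one pair of horizontal edges, then one nonempty fiber, at a time), whereas you carry them out all at once and identify $\sum_b D_b$ with $Z_y$ via the projection-invariance from lemma \ref{p:probou}.
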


\talkonly{\newpage
Если графы $K$ и $L$ связны, $Z$ является $1$-циклом в $K\Box L$ и $Z_x=0$, то $Z\sim a\times Z_y$ для любой вершины $a$ в $K$.} 

\begin{proof}[Набросок доказательства]
    Докажем лемму индукцией по числу пар $\{\sigma \Box v, \sigma \Box u\}$ ребер в $Z$, проецируемых слева на одно и то же ребро в $K$.
    Назовем эту индукцию \emph{главной}.
    В следующем абзаце докажем \emph{базу главной индукции}, когда все ребра в $Z$ имеют вид $v \Box \tau$. 
    Эту базу докажем индукцией по количеству вершин~$b$ в~$K$ таких, что $Z\cap (b\Box L)\ne\varnothing$.
    Назовем такие вершины \emph{интересными}, а эту индукцию --- \emph{вложенной}.
 
    \emph{База вложенной индукции}: если интересных вершин нет, то $Z$ пуст; если интересная вершина $a$ одна, то $Z = a \Box Z_y$ и лемма доказана.
    Для \emph{шага вложенной индукции} возьмем две интересные вершины $b, c$, отличные от $a$, и путь $b = v_1, \ldots, v_k = c$ в $K$.
    Заменим $Z$ на 
    $$Z' := Z + v_1v_2 \Box Z_y + \ldots + v_{k-1}v_k \Box Z_y.$$
    Для $Z'$ верно предположение вложенной индукции, поэтому 
    $Z \sim Z' \sim a \Box Z'_y = a \Box Z_y$.
    
    Для \emph{шага главной индукции} возьмем пару $\{\sigma \Box v, \sigma \Box u\}$ и путь $v=v_1\ldots v_k=u$ в $L$.
    Заменим $Z$ на 
    $$Z' := Z+\sigma\Box v_1v_2+\ldots+\sigma\Box v_{k-1}v_k.$$
    Для $Z'$ верно предположение главной индукции, поэтому $Z \sim Z' \sim a \Box Z'_y = a \Box Z_y$.
\end{proof}   

\begin{proof}[Набросок доказательства утверждения~\ref{p:diag}]
    Начнем с графа $K_3^{\square 2}$. 
    Можно считать, что $C=12\ldots k$. 

    (a) См. идею на рис. \ref{f:diag}.     
    Например: 
    $$\diag(123) = 1\Box K_3+K_3\Box 1 + 12\Box 23+12\Box 31 + 23\Box 31.$$

    А вот общая формула:
    $$\diag C = 1\Box C+C\Box 1 + \sum\limits_{\substack{i,j\in [k]\\ i<j}} i(i+1)\Box j(j+1),\talkno{\quad\text{где}\quad k+1:=1.}$$
\talkonly{где $k+1:=1$.}    

    (b) Околодиагональный цикл в $K_3^{\square 2}$ равен 
     $1\Box K_3+K_3\Box 1+12\Box 31+31\Box 12.$
    
    (c) Антидиагональный цикл в $K_3^{\square 2}$ равен 
    $1\Box K_3+K_3\Box 1+23\Box 31+31\Box 23+31\Box 31.$
\end{proof}

\talkno{\textit{Ответы к задаче \ref{p:k21}:} (a) $2^2$; (b) $2^2$; (c) $2^4$; (d) $2^6$.}

%%%%%%%%%%%%%%%%%%%%%%%%%%%%%%%
\talkonly{\newpage}
\subsection{Симметричные одномерные циклы в квадрате графа *}\label{s:1symsqu}

Рассмотрим симметрию (инволюцию) графа $K^{\square 2}$, переставляющую сомножители (т.~е. переставляющую точки $(x,y)$ и $(y,x)$). 
Рассмотрим соответствующую симметрию на $1$-циклах. 
\talkno{О простейшем применении симметричных $1$-циклов см. \cite[\S1.6]{Sk18}, \cite[\S1.6]{Sk}.}

\begin{theorem}\label{p:stcycles}  
    (a) Любой симметричный $1$-цикл в $K^{\square 2}$ является суммой нескольких симметризованных циклов $C\Box a + a\Box C$ и нескольких границ.
    \talkno{(Следовательно, он равен сумме нескольких диагональных циклов и нескольких границ; обратите внимание, что диагональный цикл <<симметричен по модулю границ>>.)}
    
    (b) Для связного графа $K$ формула 
    $$C\mapsto [C\Box a + a\Box C]$$ 
    определяет взаимно однозначное соответствие между $1$-циклами в $K$ и симметричными $1$-циклами в $K^{\square 2}$ по модулю границ (здесь $[x]$ обозначает класс гомологичности $1$-цикла $x$).
    
    Обратное соответствие задается формулой 
    $$[Z]\mapsto Z_x=Z_y.$$ 
\end{theorem}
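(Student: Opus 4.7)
Основная идея "--- применить теорему Кюннета \ref{stcycles2}.a2 и использовать симметрию для отождествления левой и правой проекций.

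Сначала я бы заметил, что симметрия $t\colon K^{\square 2}\to K^{\square 2}$, переставляющая сомножители, переводит каждое ребро первого типа $(a,bc)$ в ребро второго типа $(bc,a)$ и наоборот. Поэтому для $t$-симметричного $1$-цикла $Z$ соответствие $(a,\sigma)\leftrightarrow(\sigma,a)$ задаёт биекцию между ребрами первого и второго типа в $Z$. Прямое сравнение определений левой и правой проекций по модулю $2$ тогда даёт равенство $Z_x=Z_y$. Попутно каждая граница $ab\Box uv$ переходит при $t$ в границу $uv\Box ab$, так что множество сумм границ $t$-инвариантно, и отношение гомологичности совместимо с симметрией.

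Для пункта (a) я бы применил теорему Кюннета \ref{stcycles2}.a2 (взяв $b=a$): $Z\sim Z_x\times a+a\times Z_y$. С учётом $Z_x=Z_y$ получаем $Z\sim Z_x\times a+a\times Z_x$. Разложив $1$-цикл $Z_x$ в сумму простых циклов $C_1+\ldots+C_k$ по утверждению \ref{pr:simsum} и раскрыв скобки, выводим
\[
Z\sim\sum_{i=1}^k\bigl(C_i\times a+a\times C_i\bigr),
\]
что и требовалось.

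Для пункта (b) я бы ввёл прямое отображение $\varphi\colon C\mapsto[C\times a+a\times C]$ и кандидата на обратное $\psi\colon[Z]\mapsto Z_x$. Корректность $\psi$ вытекает из леммы \ref{p:probou} (проекции любой границы пусты) и установленного равенства $Z_x=Z_y$. Равенство $\psi\circ\varphi=\id$ проверяется прямым вычислением проекций правого и левого циклов: $(C\times a)_x=C$ и $(a\times C)_x=\varnothing$. Равенство $\varphi\circ\psi=\id$ "--- это ровно содержание пункта (a), переписанное через классы гомологичности. Главной технической тонкостью я считаю аккуратную проверку того, что $tZ=Z$ влечёт $Z_x=Z_y$: нужно развернуть определения двух типов рёбер в $K\Box L$, описать действие $t$ на них и сверить с определениями проекций. Остальные шаги сводятся к прямому применению теоремы Кюннета.
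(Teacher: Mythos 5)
Ваше рассуждение верно и по существу повторяет доказательство из статьи: пункт (a) получается подстановкой $b=a$ в теорему Кюннета \ref{stcycles2}.a2 вместе с наблюдением, что из $tZ=Z$ следует $Z_x=Z_y$, а пункт (b) выводится из (a), леммы \ref{p:probou} и вычисления проекций $(C\times a)_x=C$, $(a\times C)_x=\varnothing$; разложение $Z_x$ в сумму простых циклов по утверждению \ref{pr:simsum} --- естественное уточнение, в наброске статьи опущенное. Единственная мелочь: в п.~(a) граф $K$ не предполагается связным, а теорема \ref{stcycles2} сформулирована для связных графов, поэтому, как и делает статья, стоит явно добавить сведение к компонентам связности (включая симметризацию <<перекрёстных>> частей в $K_i\Box K_j$ при $i\ne j$).
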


\begin{proof}[Набросок доказательства теоремы \ref{p:stcycles}]
    Вот <<связный>> аналог п.~(а): любой симметричный $1$-цикл $Z$ в $K^{\square 2}$ для связного графа $K$ является суммой симметризованного цикла $a \Box C + C \Box a$ и нескольких границ.
    П.(b) следует из этого утверждения.
    П.(a) получается из него применением к каждой компоненте связности графа $K$.
    Докажем этот аналог в следующем абзаце.
    
    Положим $b = a$ в теореме Кюннета $\ref{stcycles2}$.a. 
    Получим, что $Z \sim Z_x \Box a + a \Box Z_y$.
    Поскольку $Z$ симметричен, то $Z_x = Z_y$.
\end{proof}

Очевидно, что в теореме \ref{p:stcycles}.a сумма границ симметрична.
Лемма \ref{l:symbo} дает более сильный результат, утверждающий, что эта сумма является суммой $1$-циклов особого вида.

\emph{Диагональной границей} называется 1-цикл $\sigma \Box \sigma$ для ребра $\sigma$ графа $K$.
\emph{Симметризованной границей} называется 1-цикл $\sigma \Box \tau + \tau \Box \sigma$ для пары $\{\sigma, \tau\}$ различных ребер графа $K$. 
 
\talkonly{\newpage}
 
\begin{lemma}\label{l:symbo} 
    Если сумма границ в $K^{\square2}$ симметрична, то она является суммой диагональных и симметризованных границ.
\end{lemma}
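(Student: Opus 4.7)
План таков: перейти на уровень $2$-цепей и применить теорему Кюннета~\ref{t:kunneth}. Обозначим через $C_2$ пространство над $\Z_2$, порожденное $2$-клетками $e\times f$ для ребер $e,f$ графа $K$, и через $\partial$ "--- граничный оператор $\partial(e\times f)=e\Box f$. Инволюция $t$ действует формулой $t(e\times f)=f\times e$. Ключевое наблюдение: $2$-цепь $Y$ является $t$-инвариантной тогда и только тогда, когда она раскладывается в сумму симметричных $2$-клеток $e\times e$ и пар $(e\times f)+(f\times e)$ с $e\ne f$; применение к такой $Y$ оператора $\partial$ даёт в точности сумму симметричных границ и симметризованной границы. Поэтому лемма равносильна следующему: если $tS=S$ и $S=\partial X$ для некоторой $2$-цепи $X$, то $S=\partial Y$ для некоторой $t$-инвариантной $2$-цепи $Y$.

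Далее я буду искать $Y$ в виде $Y=X+W$ для подходящего $2$-цикла $W$. Тогда $\partial Y=\partial X=S$, а условие $tY=Y$ превращается в $W+tW=X+tX$. Сам $2$-цикл $X+tX$ автоматически является $2$-циклом (ведь $\partial(X+tX)=S+tS=0$) и имеет нулевой коэффициент при каждой симметричной $2$-клетке $e\times e$: коэффициенты $X$ и $tX$ при $e\times e$ совпадают и сокращаются по модулю $2$.

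Для построения $W$ применю теорему Кюннета: $2$-циклы в $K^{\square 2}$ составляют пространство $H_1(K)\otimes H_1(K)$ посредством $C\otimes C'\mapsto C\times C'$, и $t$ действует перестановкой множителей. В базисе $(C_i)$ группы $H_1(K)$ произвольный $t$-инвариантный $2$-цикл записывается как $z=\sum_ic_i(C_i\otimes C_i)+\sum_{i<j}d_{ij}(C_i\otimes C_j+C_j\otimes C_i)$; коэффициент соответствующей $2$-цепи при $e\times e$ равен коэффициенту $1$-цепи $\sum_ic_iC_i$ при ребре $e$. Зануление всех этих коэффициентов даёт $\sum_ic_iC_i=0$ в $C_1(K)$, откуда $c_i=0$ для всех $i$ ввиду линейной независимости базисных $1$-циклов в $C_1(K)$ (которая для графов очевидна: $1$-цикл совпадает со своим множеством рёбер). Следовательно, $z=(1+t)\bigl(\sum_{i<j}d_{ij}C_i\otimes C_j\bigr)$, и при $z=X+tX$ можно положить $W:=\sum_{i<j}d_{ij}C_i\times C_j$.

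Главная трудность --- именно последний шаг: надо показать, что $t$-инвариантный $2$-цикл с нулевой диагональной частью лежит в образе оператора $1+t$ на пространстве $2$-циклов. Это опирается на инъективность вложения $H_1(K)\hookrightarrow C_1(K)$ и служит алгебраическим ключом ко всему утверждению.
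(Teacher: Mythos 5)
Your proposal is correct, but it follows a genuinely different route from the paper. The paper proves Lemma~\ref{l:symbo} combinatorially: it reduces to connected $K$, uses the torus relations $\sum_{i,j}\sigma_i\Box\tau_j=0$ (Proposition~\ref{t:torus}.a) to replace the given sum of boundaries by an equal one with no summand $\sigma\Box\tau$ having both edges outside a spanning tree $T$, and then proves, by induction on the number of vertices of $T$ (removing a leaf), that a nonzero symmetric sum of pairwise distinct boundaries supported in $K\Box T\cup_{T\Box T}T\Box K$ must contain a symmetric boundary or two mutually symmetric summands, after which one iterates. You instead work on the level of $2$-chains and the operator $1+t$: you reduce the lemma to finding a $t$-invariant $\partial$-preimage $Y$ of the given symmetric $1$-chain, and you produce it as $Y=X+W$, where $W$ is a $2$-cycle with $(1+t)W=X+tX$, obtained from the Künneth basis $\{C_i\times C_j\}$ of Theorem~\ref{t:kunneth}.b together with the observation that the diagonal coefficients of $X+tX$ vanish, forcing the diagonal part $\sum_i c_iC_i$ to be zero by linear independence of the $C_i$ inside the edge space (your coefficient computation at $e\times e$ is correct, since the cross terms contribute evenly, and the equivalence of ``$\partial$-cycle'' with ``cellular $2$-cycle'' is Proposition~\ref{p:ce2cy}). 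What each approach buys: the paper's argument is elementary and independent of the $2$-cycle machinery, which in the text only appears two sections later, and it yields the sharper combinatorial statement about the summands themselves; your argument is shorter and more structural, proves the slightly stronger fact that every symmetric sum of boundaries is $\partial$ of a symmetric $2$-chain, and essentially anticipates the paper's later symmetric Künneth material (Lemma~\ref{lem:sumsym2cyc}, Proposition~\ref{prop:symkunneth}), at the cost of a forward reference to Theorem~\ref{t:kunneth} (which creates no circularity, since its proof does not use Lemma~\ref{l:symbo}).
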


\begin{proof}[Набросок доказательства]
    Достаточно доказать лемму для связного графа $K$.
    Теперь достаточно доказать, что если сумма попарно различных границ симметрична, то либо сумма равна нулю, либо найдется диагональная граница, либо найдутся два взаимно симметричных слагаемых.
    Рассмотрим аналогичное утверждение, в котором $K^{\square 2}$ заменен на $K\Box T\bigcup\limits_{T\Box T} T\Box K$, где $T$ является деревом.
    Оно доказывается индукцией по числу вершин дерева $T$ с использованием удаления листа.
    Для того, чтобы свести лемму к этому аналогичному утверждению, применяется равенство $\sum\limits_{i,j} \sigma_i\Box\tau_j = 0$ для любых двух простых циклов в $K$, имеющих последовательные ребра $\sigma_1\ldots\sigma_k$ и $\tau_1\ldots\tau_\ell$ (см. утверждение~\ref{t:torus}.a и свойство \ref{p:ce2cy}(iii)).
    Применяя это равенство, заменим данную сумму границ на равную ей сумму, не содержащую слагаемых $\sigma\Box\tau$, соответствующих ребрам $\sigma, \tau$ вне остовного дерева $T$ графа $K$.
\end{proof}

\begin{proof}[Полное, немного другое доказательство (Е. Дженжер)]
    Достаточно доказать лемму для связного графа.
    Обозначим через $S$ симметричную сумму границ, а через $T$ "--- 
    %максимальное 
    остовное дерево в $K$.
    Назовем \emph{зеркальной границей} произвольную сумму диагональных и симметризованных границ.
    Докажем индукцией по числу $E$ ребер в $S\setminus (T\Box T)$, что $S$ является зеркальной границей.
    Назовем эту индукцию \emph{главной}.
    
    Докажем \emph{базу главной индукции} при $E = 0$ индукцией по числу $n$ вершин в дереве $T$.
    Назовем эту индукцию \emph{вложенной}.
    В этом случае $S\subset T\Box T$.

    \emph{База вложенной индукции} при $n = 1$ очевидна.
    В следующих четырех абзацах докажем \emph{переход вложенной индукции} от $n$ к $n+1$.
     
    Возьмем висячую вершину $v$ и обозначим через $u$ единственного соседа вершины $v$ в дереве $T$.
     
    Если $(uv, v)\in S$, то положим 
    $$S'=\sum\limits_{\substack{e\in K\\ e\neq uv}} (e\Box uv+uv\Box e) + uv\Box uv.$$
    
    Иначе (если $(uv, v)\notin S$) положим $$S'=\sum\limits_{\substack{e\in K\\ e\neq uv}} (e\Box uv+uv\Box e).$$
     
    Тогда в $S+S'$ нет ребер $(xy, v)$ и $(v, xy)$, где $x, y$ "--- вершины дерева $T$.
    Значит, $S+S'\subset (T-v)\Box (T-v)$. 
    По предположению вложенной индукции, $S+S'$ является зеркальной границей. 
    Тогда $S=(S+S')+S'$ является зеркальной границей.
    
    Докажем \emph{переход главной индукции} от $E$ к $E+1$. 
    Возьмем ребро $(ab, x)$ в $S\setminus(T\Box T)$. 
    
    Так как $S$ является суммой границ и $S_x=\emptyset$, то найдется вершина $y\neq x$ в $K$, для которой в $S$ есть ребро $(ab, y)$. 
    Возьмем произвольный путь $x \ldots y$ в $K$ между вершинами $x$ и $y$. 
    Положим 
    $$S' := ab \Box (x\ldots y) + (x\ldots y) \Box ab.$$
    Тогда 1-цикл $S'$ является зеркальной границей.
    Дополнение $S'\setminus(T\Box T)$ содержит только ребра $(ab, x)$, $ (x, ab)$, $ (ab, y)$, $ (y, ab)$.
    Так как они принадлежат $S$, то $S+S'$ содержит $E-4$ ребра не из $T\Box T$.
    Значит, $S+S'$ является зеркальной границей по предположению главной индукции.
    Тогда для $S=(S+S')+S'$ утверждение леммы также верно.
\end{proof}

\begin{proposition}\label{p:uni1s}
    Для связного графа $K$ с $V$ вершинами и $E$ ребрами количество симметричных $1$-циклов в $K^{\square 2}$ равно $2^{VE - \binom{V}{2}}$.
\end{proposition}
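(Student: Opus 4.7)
Мой план --- перевести задачу в чисто комбинаторный вопрос о подсчёте наборов подграфов графа $K$, удовлетворяющих простым условиям чётности. А именно, рёбра $K^{\square 2}$ бывают двух типов --- \emph{горизонтальные} $(\sigma, v)$ и \emph{вертикальные} $(v, \sigma)$ для $\sigma \in E(K)$, $v \in V(K)$; симметрия $t$ переставляет эти типы: $t(\sigma, v) = (v, \sigma)$. Поэтому любой симметричный $1$-цикл $C$ однозначно задаётся набором $(D_v)_{v \in V(K)}$ подграфов $D_v \subseteq E(K)$ по правилу: $\sigma \in D_v$ тогда и только тогда, когда $(\sigma, v) \in C$ (в силу симметрии это равносильно условию $(v, \sigma) \in C$). Прямой подсчёт показывает, что степень вершины $(a, b)$ в $C$ равна $d_{D_b}(a) + d_{D_a}(b)$, где $d$ --- обычная степень вершины в подграфе.

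Далее я замечаю, что условие $1$-цикла (чётность всех степеней) при $a = b$ выполнено автоматически, а при $a \ne b$ даёт одно $\mathbb{F}_2$-линейное уравнение
\[
L_{\{a,b\}}\colon\ d_{D_b}(a) + d_{D_a}(b) \equiv 0 \pmod 2
\]
на $VE$ булевых переменных $D_v(\sigma)$, индексированных парами $(v, \sigma) \in V(K) \times E(K)$. Всего получаем $\binom{V}{2}$ таких уравнений --- по одному на каждую неупорядоченную пару различных вершин. Если эти уравнения линейно независимы над $\mathbb{F}_2$, то пространство решений имеет размерность $VE - \binom{V}{2}$, и количество симметричных $1$-циклов равно $2^{VE - \binom{V}{2}}$, что и требуется.

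Основное (и, по сути, единственное содержательное) препятствие --- проверка линейной независимости этих $\binom{V}{2}$ уравнений. Я планирую действовать так. Допустим, что $\sum_{\{a,b\}} c_{\{a,b\}} L_{\{a,b\}} = 0$ как линейная форма от переменных $D_v(\sigma)$. Прямой подсчёт коэффициентов даст следующее: коэффициент при $D_v(\sigma)$ в $L_{\{a,b\}}$ равен $1$ тогда и только тогда, когда $v \in \{a, b\}$ и второй элемент пары лежит в $\sigma$. Отсюда коэффициент при $D_p(pq)$ (для $pq \in E(K)$) в сумме равен $c_{\{p,q\}}$, откуда $c_{\{p,q\}} = 0$ для каждого ребра $pq \in E(K)$; коэффициент при $D_v(pq)$ для $pq \in E(K)$ и $v \notin \{p, q\}$ равен $c_{\{v,p\}} + c_{\{v,q\}}$, откуда $c_{\{v,p\}} = c_{\{v,q\}}$. Значит, для фиксированного $v$ функция $w \mapsto c_{\{v,w\}}$ постоянна на компонентах связности графа $K - v$ и равна нулю на соседях $v$ в $K$. Поскольку $K$ связен, каждая компонента $K - v$ содержит соседа $v$, так что $c_{\{v,w\}} = 0$ при всех $w \ne v$, и все коэффициенты нулевые. Это и завершит доказательство линейной независимости, а с ним и утверждения.
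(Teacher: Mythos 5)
Ваше рассуждение верно, но идёт по существенно иному пути, чем доказательство в статье. Вы опираетесь на то, что симметрия графа $K^{\square 2}$ не оставляет на месте ни одного ребра, поэтому симметричный набор рёбер однозначно задаётся своей <<горизонтальной>> частью, т.~е. набором $(D_v)_{v\in V(K)}$ множеств рёбер графа $K$; условие $1$-цикла превращается в явную систему из $\binom{V}{2}$ линейных уравнений над $\Z_2$ от $VE$ переменных (диагональные уравнения тривиальны), и всё сводится к линейной независимости этих уравнений, которую Вы корректно выводите из связности: из коэффициентов при $D_p(pq)$ получается $c_{\{p,q\}}=0$ для рёбер $pq$, из коэффициентов при $D_v(pq)$ с $v\notin\{p,q\}$ --- постоянство функции $w\mapsto c_{\{v,w\}}$ на компонентах графа $K-v$, а каждая такая компонента содержит соседа вершины $v$. В статье же утверждение выводится из утверждения \ref{prop:inv}: строится вспомогательный граф $K^{\circ 2}$, в котором диагональные вершины $(a,a)$ заменены рёбрами, чтобы инволюция не имела неподвижных вершин; подсчитываются его параметры ($V^2+V$ вершин, $2VE+V$ рёбер, из них $V$ симметричных), применяется случай $I>0$ утверждения \ref{prop:inv}, и затем строится взаимно однозначное соответствие между симметричными $1$-циклами в $K^{\circ 2}$ и в $K^{\square 2}$. Ваш подход самодостаточен и элементарен: он не использует ни утверждение \ref{prop:inv}, ни симметричные остовные деревья, на которых основано его доказательство. Подход статьи, напротив, переиспользует общую технику подсчёта симметричных $1$-циклов при инволюции без неподвижных вершин, применимую к произвольным графам с симметрией и связанную с другими результатами текста; зато Ваш метод даёт независимую проверку ответа и явное описание симметричных $1$-циклов как решений конкретной линейной системы.
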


\begin{proof}[Набросок доказательства]
    Заменим <<диагональные>> вершины $a \Box a$ графа $K^{\square 2 }$ на ребра (рис. \ref{f:kcirc2}).  
%    Тогда ни одна вершина в полученном графе не будет неподвижна. 
    Строго говоря, по графу $K^{\square 2}$ построим граф $K^{\circ 2}$ следующим образом: заменим каждую вершину $a \Box a$ на две вершины $a \Box \circ$ и $\circ \Box a$, соединенные ребром (символу $\circ$ не придается отдельного смысла).
    Каждую вершину $a \Box b$, которая соединена с $a \Box a$ в $K^{\square 2}$, соединим с $a \Box \circ$ в $K^{\circ 2}$.
    Каждую вершину $b \Box a$, которая соединена с $a \Box a$ в $K^{\square 2}$, соединим с $\circ \Box a$ в $K^{\circ 2}$.
    Симметрия графа $K^{\circ 2}$ переставляет местами вершины 
    $a \Box b$ и $b \Box a$, где $a$ или $b$ могут совпадать с $\circ$.

\begin{figure}[!hbt]\centering
%!htb
    \begin{minipage}[t]{0.33\textwidth}
	\centering
	\includegraphics[width=2cm]{grid-3-3.eps}
    \end{minipage}%\hfill
    \begin{minipage}[t]{0.33\textwidth}
	\centering
	\includegraphics[width=2cm]{Kcirc1.eps}
    \end{minipage}%\hfill
    %\captionsetup{width=100pt}
    \caption{Слева: $K^{\square 2}_{2,1}$, т.~е. сеточный граф $3\times 3$. 
    Справа: $K^{\circ 2}_{2,1}$}
        \label{f:kcirc2}
\end{figure}

    В графе $K^{\circ 2}$ ровно $V^2+V$ вершин и $2VE+V$ ребер.
    Из этих ребер ровно $V$ симметричны себе. 
    Тогда по утверждению \ref{prop:inv} в $K^{\circ 2}$ ровно $2^{VE - \binom{V}{2}}$ симметричных $1$-циклов.
    Поэтому достаточно доказать, что существует взаимно однозначное соответствие между симметричными $1$-циклами в $K^{\circ 2}$ и симметричными $1$-циклами в $K^{\square 2}$.
    
    Для этого определим \emph{стягивание <<диагональных>> ребер},
    т.е. отображение $f\colon V(K^{\circ 2})\to V(K^{\square 2})$, сопоставив вершине 
    $a \Box b$ вершину $a \Box a$, если $b = \circ$; $b \Box b$, если $ a = \circ$ и $a \Box b$ иначе.
    Отображение $f$ задает взаимно однозначное соответствие между <<недиагональными>> ребрами графа $K^{\circ 2}$ и ребрами графа $K^{\square 2}$.
    Следовательно, оно задает взаимно однозначное соответствие $\widehat f$ между наборами <<недиагональных>> ребер графа $K^{\circ 2}$ и наборами ребер графа $K^{\square 2}$.
    
    Возьмем симметричный $1$-цикл $C$ в графе $K^{\circ 2}$ и вершину $a \Box b$ в $K^{\square 2}$.
    Если $a \neq b$, то $a \Box b$ принадлежит тем ребрам множества $\widehat fC$, прообразы которых смежны с вершиной $a \Box b$ в $K^{\circ 2}$.
    Иначе $a \Box a$ принадлежит тем ребрам множества $\widehat fC$, прообразы которых смежны с вершинами $a \Box \circ$ и $\circ \Box a$ в $K^{\circ 2}$.
    Так как $C$ является $1$-циклом, количества указанных в обоих случаях ребер четны. 
    Поэтому $\widehat fC$ является $1$-циклом в $K^{\square 2}$.
    
    Поскольку $t\widehat f\sigma = \widehat ft\sigma$ для любого <<недиагонального>> ребра $\sigma$ в $K^{\circ 2}$, то $\widehat fC$ симметричен.
    
    Осталось заметить, что любой $1$-цикл в $K^{\circ 2}$ однозначно определяется своим набором <<недиагональных>> ребер.
\end{proof}

\talkonly{\newpage}
\subsection{Одномерные циклы во взрезанном квадрате графа *}\label{s:1delsqu}

Следующий граф $K^{\square \underline2}$ \talkno{(как и комбинаторный взрезанный квадрат $K^{\underline2}$, определенный в \S\ref{s:2delsqu})} является теоретико-графовым (топологическим) аналогом множества размещений.
Рассмотрим двух жуков на $K$, как описано в начале \S\ref{s:1squ}, но теперь им запрещено находиться в одной вершине. 
Иными словами, рассмотрим множество (конфигурационное пространство) упорядоченных пар $(x,y)$ точек графа $K$ (точнее, его тела, см. замечание \ref{r:body}.a), хотя бы одна из которых является вершиной, а другая не лежит внутри ребра с концом в этой вершине. 
Это множество является объединением конечного числа отрезков, т.~е. телом некоторого графа.
Вот прямое комбинаторное определение этого графа. 

Вершинами графа $K^{\square \underline2}$ являются упорядоченные пары различных вершин графа~$K$.
Вершины графа $K^{\square \underline2}$ соединены ребром в $K^{\square \underline2}$, если они соединены ребром в $K^{\square 2}$. 
Обозначение с нижним подчеркиванием для графа $K^{\square \underline 2}$ (как и для $K^{\underline 2}$ в \S\ref{s:2delsqu}) мотивировано обозначением для количества размещений (нижней степени, убывающего факториала). 
Мы просим читателя отличать $K^{\square \underline 2}$ от $K^{\square 2}$ (и далее $K^{\underline 2}$ от $K^2$). 
 %, знакомым математикам, занимающимся комбинаторикой.
%https://ru.wikipedia.org/wiki/Убывающие_и_возрастающие_факториалы

\begin{figure}[ht]
    \centerline{\includegraphics[width=5cm]{cuboctahedron2.eps}
    \qquad\includegraphics[width=8cm]{del4.eps}}
    \caption{Слева: кубооктаэдр; объединение ребер есть $K_4^{\square\underline2}$; четырехугольные грани образуют $K_4^{\underline2}$\talkno{ (см. \S\ref{s:cydepr})}. 
    Справа: то же с некоторыми пояснениями (не показана невидимая часть, проекция которой получается из изображенной проекции поворотом на $\pi/3$)}
    \label{f:del3}
\end{figure}

Например, 

$\bullet$ $K_{2,1}^{\square\underline2}$ "--- объединение двух непересекающихся копий графа $K_{2,1}$; 
 
$\bullet$ $K_3^{\square\underline2}$ "---  цикл на $6$ вершинах; 

$\bullet$ $K_{3,1}^{\square\underline2}$ "--- цикл на $12$ вершинах;  

$\bullet$ $K_4^{\square\underline2}$ "--- объединение ребер кубооктаэдра (рис.~\ref{f:del3}). 

%AS ссылку \cite{ABM+}
%MA: перенес
% Применения к различению можно проиллюстрировать даже для графов в плоскости. 
% %AS переместить раньше, тут двумерные циклы не нужны
% Рассмотрим вложение $f: K \to \R^2$.
% \cite{ABM+}!!!%обобщение простейших 1-циклов
% Пусть $C$ --- целочисленный 1-цикл в графе $K^{\Box \underline2}$ (ребра которого как-то ориентированы), см. \cite[задача 10.5.?]{Sk20}.
% $w_f(C) \in\Z$ 
% Это число равно <<степени сужения $\t f|_C$>> отображения $\t f: K^{\Box \underline2} \to\R^2\{0\}$, заданного формулой $\t f(x,y) = f(x) - f(y)$.
% %Тогда величина $w_f(C) := \frac{1}{2\pi}\int\limits_{C}\t f \in \Z$ является инвариантом вложения $f$ 

% На множестве целочисленных 1-циклов в $K^{\Box \underline2}$ имеется операция суммы, в отличие от множества, индексирующего более простые инварианты \cite{ABG+}. 
% Инвариант $w_f(C)$ аддитивен по $C$: $w_f(C_1+C_2)=w_f(C_1)+w_f(C_2)$.
% Поэтому полезно рассматривать такие инварианты для всех целочисленных 1-циклах в $K^{\Box \underline2}$. 

Среди циклов, определенных перед утверждением \ref{p:diag}, некоторые лежат в $K^{\square \underline 2} \subset K^{\square 2}$. 
Это, например, околодиагональные циклы и симметризованные циклы $a\Box C+C\Box a$ для $a\not\in C$.
А вот симметризованный цикл $a \Box C + C \Box a$ для $a\in C$ не лежит в $K^{\square \underline 2}$.
%Это, например, околодиагональные циклы и симметризованные циклы $a\times C+C\times a$ для $a\not\in C$.
%А вот симметризованный цикл $a \times C + C \times a$ для $a\in C$ не лежит в $K^{\square \underline 2}$. 
 
\talkonly{\newpage}
\begin{pr}[загадка]\label{p:1delpro} 
Найдите количество $1$-циклов в $K^{\square \underline2}$ для связного графа $K$.
(\emph{Указание:} примените утверждение~\ref{pr:orinumber2}.a и его обобщение для несвязных графов.)
\end{pr}

\begin{pr}\label{p:knn21} 
Найдите количество $1$-циклов в $K^{\square \underline2}$ по модулю границ (содержащихся) в $K^{\square \underline2}$ (т.~е. по модулю границ, соответствующих парам несмежных ребер), для $K=$

(a) $K_3$; \quad (b) $K_{2,2}$; \quad (c)  $K_{2,3}$; \quad (d) $K_4$; \quad (e) $K_{3,3}$; \quad (f) $K_5$.   

(По поводу обобщения на произвольные графы см. \cite{FH10}.)
\end{pr}

%\begin{pr}\label{todelpro}  [delete?]Является ли антидиагональный цикл суммой $1$-цикла в $K^{\square\underline2}$ и границ в $K^{\square 2}$? 
%yes \end{pr}

Далее изучается выразимость одних $1$-циклов в $K^{\square\underline2}$ через другие. 

Например, следующая гипотеза является версией утверждения~\ref{p:diag}.b с заменой $K^{\square 2}$ на $K^{\square \underline 2}$.

\begin{conjecture}\label{stcyclesdel1} В $K^{\square\underline2}$ никакой околодиагональный цикл не является суммой границ и симметризованных циклов. 
\end{conjecture}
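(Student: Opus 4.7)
Идея состоит в построении $\Z_2$-линейного функционала $\phi\colon H_1(K^{\square\underline2})\to\Z_2$, обнуляющегося на всех границах $\sigma\Box\tau$ (для непересекающихся $\sigma,\tau$) и на всех симметризованных циклах $a\times C'+C'\times a$ (с $a\notin C'$), но принимающего значение $1$ на некотором околодиагональном цикле. По двойственности над $\Z_2$ существование такого $\phi$ равносильно утверждению гипотезы: класс околодиагонального цикла не принадлежит подпространству, порождённому границами и симметризованными циклами, тогда и только тогда, когда существует разделяющий их линейный функционал.

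Естественный источник таких функционалов --- длинная точная последовательность пары $(K^{\square2},K^{\square\underline2})$ в $\Z_2$-гомологиях, где $K^{\square2}$ рассматривается как двумерный квадратный комплекс с $2$-клетками $\sigma\times\tau$. Линк удалённой <<диагональной>> точки $(v,v)$ в этом комплексе изоморфен полному двудольному графу $K_{d_v,d_v}$ (доли индексируются рёбрами графа $K$, инцидентными $v$), откуда $H_2(K^{\square2},K^{\square\underline2};\Z_2)\cong\bigoplus_{v\in V(K)}\Z_2^{(d_v-1)^2}$. Граничный гомоморфизм переводит эти <<локальные линковые>> классы в $H_1(K^{\square\underline2};\Z_2)$, давая там классы, <<зацепляющиеся>> с каждой удалённой диагональной вершиной; двойственные к ним $1$-коциклы на $K^{\square\underline2}$ и служат кандидатами в искомый $\phi$.

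Последовательность шагов такова:
(1)~для каждой вершины $v\in V(K)$ и элемента $\omega\in H^1(K_{d_v,d_v};\Z_2)$ построить $1$-коцепь $\phi_{v,\omega}$ на $K^{\square\underline2}$, сосредоточенную на рёбрах вида $(vu,w)$ и $(u,vw)$ (т.~е. близких к $(v,v)$) и являющуюся коциклом по всем границам $\sigma\Box\tau$ с $\sigma\cap\tau=\varnothing$;
(2)~выписать алгебраические условия на $\omega$, гарантирующие обнуление $\phi_{v,\omega}$ на всех симметризованных циклах $a\times C'+C'\times a$ с $a\notin C'$;
(3)~вычислить пару $\phi_{v,\omega}$ с околодиагональным циклом $C=v_1\ldots v_k$ --- вклад набирается из окрестностей каждой вершины $v_j\in C$, где цикл локально представляет определённый $1$-цикл в линке $K_{d_{v_j},d_{v_j}}$;
(4)~подобрать $\omega$ (возможно, комбинируя $\phi_{v,\omega}$ по нескольким $v$), чтобы итоговая пара была равна $1$.

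Главная трудность --- нетривиальный выбор $\omega$ на шагах (2)--(4). Простейшие кандидаты (например, такие $\omega$, для которых $\phi_{v,\omega}((a,vc))$ зависит лишь от $c$ или лишь от $a$) обнуляются на околодиагональных циклах: входы и выходы такого цикла по строкам и столбцам вблизи каждой вершины $(v_j,v_j)$ симметричны и дают парные сокращающиеся вклады, так что требуется $\omega$, нетривиально <<перепутывающее>> левую и правую доли линка $K_{d_v,d_v}$. Прямым перебором в случае $K=K_4$, $C=123$ я убедился, что подходящая $\Z_2$-коцепь существует: например, $\phi$, принимающая значение $1$ ровно на рёбрах $(13,2)$, $(1,24)$, $(1,34)$, $(2,34)$ графа $K_4^{\square\underline2}$, удовлетворяет всем трём требованиям и даёт $\phi(N)=1$ для околодиагонального цикла $N$ треугольника $123$, подтверждая гипотезу в этом случае. Однако систематическое описание такого $\phi$ в произвольном графе $K$ и для произвольного цикла $C$ --- основное препятствие, которое предстоит преодолеть.
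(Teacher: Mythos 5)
Заметьте прежде всего, что утверждение \ref{stcyclesdel1} в тексте --- именно \emph{гипотеза}: она явно перечислена среди задач для исследования, решение которых авторам неизвестно, так что в статье нет доказательства, с которым можно было бы сравнивать. Ваш текст тоже не является доказательством, и вы сами это фиксируете: это план (двойственность над $\Z_2$: найти линейный функционал $\phi$ на $1$-цепях в $K^{\square\underline2}$, обнуляющийся на границах $\sigma\Box\tau$ несмежных рёбер и на симметризованных циклах $a\times C'+C'\times a$ с $a\notin C'$, но равный $1$ на околодиагональном цикле) плюс проверка одного частного случая. Сама редукция по двойственности корректна, а ваша проверка для $K=K_4$ и треугольника $123$, судя по всему, верна: коцепь с носителем $(13,2)$, $(1,24)$, $(1,34)$, $(2,34)$ действительно обнуляется на шести границах пар непересекающихся рёбер и на четырёх симметризованных треугольниках, и даёт $1$ на околодиагональном цикле треугольника $123$; это честно решает вопрос для этого конкретного цикла.

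Однако основной пробел --- это именно шаги (2)--(4) вашего плана: для произвольного графа $K$ и произвольного простого цикла $C$ никакой конструкции $\phi$ (или $\omega$) не предъявлено, не доказано, что нужный выбор всегда существует, и не показано, что спаривание с околодиагональным циклом равно $1$; это и есть всё содержание гипотезы, а не техническая деталь. Даже в $K_4$ остаются непроверенными околодиагональные циклы, отвечающие циклам длины $4$. Рассуждение про длинную точную последовательность пары $(K^{\square2},K^{\square\underline2})$ и линки $K_{d_v,d_v}$ диагональных вершин остаётся эвристикой: из него не выведено ни существование коцикла с нужными свойствами (обнуление на симметризованных циклах --- глобальное условие, не сводящееся к локальным линкам), ни его ненулевое значение на околодиагональном цикле. Итог: гипотеза остаётся открытой; ваш вклад --- разумная стратегия и подтверждение в одном частном случае, которое стоило бы оформить отдельно как частичный результат.
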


% Поскольку при $a \in C$ симметризованный цикл $C \times a + a \times C$ не лежит в $K^{\square \underline 2}$, то в гипотезе \ref{stcyclesdel1} речь идет о симметризованных циклах, для которых 
% %только торы, 
% $a \not \in C$.

\talkonly{\newpage}

\emph{Триодическим циклом} называется цикл 
$$1 \Box 3,\ 1\Box 1',\ 1\Box 2,\ 1'\Box 2,\ 3\Box 2,\ 3\Box 1',\ \ldots\quad\text{в}\quad K_{3,1}^{\square 2},$$ 
где точками обозначена часть, симметричная выписанной части (т.~е. полученная заменой $x\Box y$ на $y\Box x$). 
\emph{Триодическим циклом} также называется аналогичный цикл, соответствующий $K_{3,1}$-подграфу графа $K$.  

\begin{pr}\label{stcyclesdel}
(a) Триодический цикл не является суммой никаких границ в $K^{\square\underline2}$.

(b) Любой ли симметризованный цикл в $K^{\square \underline2}$  является суммой околодиагональных циклов, триодических циклов и границ?  
%yes

(c) Любой ли околодиагональный цикл в $K_4^{\square \underline2}$  является суммой некоторых симметризованных циклов, триодических циклов и границ?  

(d) Выразите в $K_4^{\square \underline2}$ цикл $4\Box K_3+K_3\Box 4$ через околодиагональные и триодические циклы по модулю границ (см. ответ в \cite[замечание 7.3.b]{ABM+}).
\end{pr}

\begin{conjecture}\label{c:stcyclesdel} В $K^{\square\underline2}$ 

(a) любой $1$-цикл является суммой левых циклов, правых циклов, околодиагональных циклов, триодических циклов и границ;
 
(b) любой симметричный $1$-цикл является суммой околодиагональных циклов (переопределенных в симметричном виде), триодических циклов и границ (см. доказательство в \cite{Dz25});

(c) любой симметричный $1$-цикл является суммой <<симметричных околодиагональных циклов>>,
триодических циклов и симметризованных границ.  
\end{conjecture}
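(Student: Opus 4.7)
Стратегия "--- свести всё к теореме Кюннета \ref{stcycles2} для $K^{\square2}$ и затем проконтролировать отличие $K^{\square\underline2}$ от $K^{\square2}$. Основная трудность в том, что граница $\sigma\Box\tau$ для смежных рёбер $\sigma,\tau\subset K$ с общей вершиной $v$ проходит через диагональную вершину $(v,v)$ и потому не лежит в $K^{\square\underline2}$; роль околодиагональных и триодических циклов "--- служить <<поправкой>> на такие запрещённые границы.

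Для (а) я бы взял $1$-цикл $C$ в $K^{\square\underline2}$ и вершины $a,b$ в $K$ вне $C_y$ и $C_x$ соответственно. По теореме Кюннета \ref{stcycles2}(а2) в $K^{\square2}$ верно $C=C_x\times b+a\times C_y+B$ для некоторой суммы $B$ границ. Так как левый и правый циклы лежат в $K^{\square\underline2}$, то и $B\subset K^{\square\underline2}$. Ключевая техническая лемма: \emph{всякая сумма границ в $K^{\square2}$, лежащая в $K^{\square\underline2}$, равна сумме околодиагональных циклов, триодических циклов и границ в $K^{\square\underline2}$}. Её я бы доказывал локально у каждой диагональной вершины $(v,v)$. Слагаемое $vu\Box vw\in B$ вносит в звезду вершины $(v,v)$ ровно два ребра "--- <<горизонтальное>> $(v,vw)$ и <<вертикальное>> $(vu,v)$. Условие $B\subset K^{\square\underline2}$ равносильно чётности всех степеней в мультиграфе $H_v$ на множестве инцидентных $v$ рёбер графа $K$, в котором есть ребро $\{vu,vw\}$ за каждое слагаемое $vu\Box vw\in B$. Разлагая $H_v$ на простые циклы, каждому простому циклу длины $\ge3$ в $H_v$ я бы сопоставил триодический цикл, реализующий соответствующую локальную коррекцию; кратные пары (циклы длины $2$) дают нулевой вклад. Проверка того, что глобальная сумма локальных коррекций равна $B$ по модулю допустимых в $K^{\square\underline2}$ границ, завершает (а).

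Для (б) по единственности в \ref{stcycles2}(а1) симметричность $C$ влечёт $C_x=C_y$; применяя (а) с $a=b\notin C_x$, получаем $C=(C_x\times b+b\times C_x)+N+T+B'$, где $N$ "--- сумма околодиагональных, $T$ "--- триодических циклов, $B'$ "--- границ в $K^{\square\underline2}$. Первое слагаемое "--- симметризованный цикл, который по аналогии с утверждением \ref{p:diag}(а) выражается через околодиагональный цикл и границы в $K^{\square2}$; появляющиеся при этом границы снова сводятся к $K^{\square\underline2}$-допустимым и триодическим по ключевой лемме, что даёт требуемое разложение. Пункт (в) уточняет (б): применив аналог леммы \ref{l:symbo} в $K^{\square\underline2}$ (всякая симметричная сумма границ равна сумме симметричных границ и симметризованной границы) и заменив каждый околодиагональный цикл на его симметричный представитель (тривиально для $K_3$-треугольников и с триодической коррекцией для длинных циклов), получаем разложение в симметричные околодиагональные, триодические циклы и симметризованные границы.

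Главным препятствием мне видится основная техническая лемма шага (а): требуется убедиться, что локальные коррекции, отвечающие простым циклам в $H_v$, выражаются именно через триодические циклы "--- это правдоподобно, поскольку триодический цикл живёт на $K_{3,1}$-подграфе и кодирует элементарный <<обход>> трёх рёбер, инцидентных общей вершине, однако глобальное согласование локальных коррекций (когда одна и та же граница $\sigma\Box\tau$ вносит вклад в две звезды $(v,v)$ и $(w,w)$) может потребовать нетривиальных тождеств между триодическими и околодиагональными циклами. Разбор задачи \ref{stcyclesdel}(г) в $(K_5\setminus(45))^{\square\underline2}$ и малых случаев из задачи \ref{p:knn21} должен подсказать точный вид общей формулы и, возможно, выявить необходимость добавить к списку <<базисных>> образующих ещё какие-то семейства циклов.
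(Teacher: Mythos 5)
Прежде всего: утверждение \ref{c:stcyclesdel} сформулировано в тексте как \emph{гипотеза} и во введении явно отнесено к задачам для исследования, решение которых авторам неизвестно (лишь для п.~(b) дана ссылка на внешнее доказательство в \cite{Bo}); собственного доказательства в статье нет, так что сравнивать ваше рассуждение не с чем. Ваш текст --- это план, а не доказательство: вся содержательная часть гипотезы сосредоточена именно в вашей <<ключевой технической лемме>> (всякая сумма границ из $K^{\square2}$, лежащая как множество рёбер в $K^{\square\underline2}$, выражается через околодиагональные циклы, триодические циклы и границы, содержащиеся в $K^{\square\underline2}$), а она у вас не доказана --- вы сами отмечаете, что глобальное согласование локальных коррекций у разных диагональных вершин может потребовать нетривиальных тождеств. Сведение по теореме Кюннета \ref{stcycles2} --- лёгкая часть; перенос <<по модулю границ>> из $K^{\square2}$ в $K^{\square\underline2}$ --- это и есть открытый вопрос (ср. задачу \ref{stcyclesdel}.a о том, что триодический цикл \emph{не} является суммой границ в $K^{\square\underline2}$, и гипотезу \ref{stcyclesdel1}).

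Кроме того, в самой редукции есть конкретные пробелы. Во-первых, вершины $a\notin C_y$ и $b\notin C_x$ могут не существовать (например, если $C_x$ --- гамильтонов цикл в $K$); без них левый и правый циклы проходят через диагональ, не лежат в $K^{\square\underline2}$ и не входят в список разрешённых слагаемых. Во-вторых, ваш локальный критерий у вершины $(v,v)$ неверен в заявленной равносильности: условие $B\subset K^{\square\underline2}$ требует чётности отдельно числа слагаемых $vu\Box v{*}$ (дающих ребро $(vu,v)$) и отдельно числа слагаемых ${*}\Box vw$ с первым сомножителем, содержащим $v$ (дающих ребро $(v,vw)$); неориентированный мультиграф $H_v$ склеивает $vu\Box vw$ с $vw\Box vu$ и теряет эту информацию --- например, одиночное слагаемое $vu\Box vu$ даёт чётную степень (петлю) в $H_v$, но нечётную кратность рёбер при $(v,v)$. Поэтому разложение $H_v$ на простые циклы не даёт напрямую нужных коррекций, и неясно, что каждая коррекция реализуется именно триодическим циклом. Пункты (b) и (c) наследуют все эти пробелы (плюс недоказанный аналог леммы \ref{l:symbo} для $K^{\square\underline2}$ в п.~(c)). Как программа исследования --- с проверкой на малых примерах из задач \ref{stcyclesdel}.d и \ref{p:knn21} --- ваш подход разумен и правильно локализует трудность, но доказательством гипотезы он не является.
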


\begin{proof}[Указания и ответы к задаче~\ref{p:knn21}]
(a-d) Если множество границ в 
$K^{\square\underline2}$
для графа $K$ из пп.~(a-d)
имеет нулевую сумму, то это множество пусто. 
    \talkno{См. утверждения \ref{p:ce2cy} и \ref{t:dpcycle0}.abc.}
    
    (e), (f) Сумма всех границ в $K^{\square\underline2}$ равна нулю. 
    Это единственный непустой набор границ в $K^{\square\underline2}$, сумма которых равна нулю. 
    \talkno{См. утверждения \ref{p:ce2cy},  \ref{t:dpcycle} и \ref{t:dpcycle2}.}

%    \emph{Ответы:}
%    (a) $2^1$; (b) $2^5$; (c) $2^{17}$; (d) $2^{13}$; (e) $2^{43}$; (f) $2^{41}$.

\emph{Ответы:} (a) $2^1$; (b) $2^1$; (c) $2^5$; (d) $2^7$; (e) $2^8$; (f) $2^{12}$.
\end{proof}

\talkonly{\end{document}}

%\newpage
%%%%%%%%%%%%%%%%%%%%%%%%%%%%%%%%%%%%

\section{Двумерные циклы}

В этом параграфе мы приведем <<двумерные>> обобщения результатов из \S\ref{s:1cyc}, а также их применения. 

\subsection{Двумерные циклы в гиперграфах *}\label{s:chygra1}

Назовем \emph{$2$-циклом} множество $C$, состоящее из $3$-элементных подмножеств (называемых \emph{гранями}) множества $[n]$, такое, что каждое $2$-элементное подмножество множества $[n]$ содержится в четном количестве подмножеств, являющихся элементами множества $C$.  

Например, пустое множество является $2$-циклом.

Приведем еще один пример $2$-цикла.
Для $4$-элементного подмножества $A\subset[n]$ назовем \textit{тетраэдром} $T_A$ множество всех $3$-элементных подмножеств множества $A$. 
Другими словами, для попарно различных $a,b,c,d\in [n]$ определим \textit{тетраэдр} 
$$T_{\{a,b,c,d\}}:=\bigl\{\{a,b,c\},\{a,b,d\},\{a,c,d\},\{b,c,d\}\bigr\}.$$ 
Очевидно, что любой тетраэдр является $2$-циклом.

Понятие 2-цикла уже неявно появлялось как соотношение между 1-циклами (утверждения \ref{pr:rel1} и \ref{pr:knn2}) и при подсчете количества 1-циклов по модулю границ (задача \ref{p:knn21}). 
Другие воплощения этого понятия появятся далее в виде ладейных циклов (задача \ref{pr:rook} и замечание \ref{p:cychyp}) и клеточных 2-циклов в произведении графов (\S\ref{s:cydepr}). 

Сравните следующие утверждения \ref{prop:2-cyc}, \ref{l:gencom} и \ref{pr:rel} с аналогичными утверждениями \ref{en:gencom1:sum1}, \ref{l:gencom1} и \ref{pr:rel1} соответственно.

\emph{Границей} грани $\{a, b, c\}$  называется cемейство подмножеств $\{ab, bc, ca\}$. 

\begin{proposition}\label{prop:2-cyc}
    (a) Сумма $2$-циклов является $2$-циклом.

    (b) Множество граней является $2$-циклом тогда и только тогда, когда сумма границ этих граней нулевая.
\end{proposition}
 
\begin{proposition}\label{l:gencom}
    (a) Любой $2$-цикл является суммой нескольких тетраэдров.
    
    %для, а не в, ибо $2$-цикл не есть подмножество в $[n]$
    (b) Количество $2$-циклов для $[n]$ равно $2^{\binom{n-1}{3}}$.
\end{proposition}

\begin{proof}[Набросок доказательства]
    Аналогично доказательствам соответствующих пунктов утверждения~\ref{l:gencom1}.
    
    (a) Достаточно показать, что произвольный $2$-цикл $C$ равен сумме 
    $$\widehat C:= \sum\limits_{\substack{\{ i, j, k\} \in C \\ i,j,k<n}} T_{\{i,j,k,n\}}$$
    тетраэдров $T_{\{i,j,k,n\}}$ по всем $3$-элементным подмножествам $\{i, j, k\} \in C$, не содержащим $n$.
    Каждая грань в $ C + \widehat C $ содержит $n$.
    Заметим, что если все грани $2$-цикла имеют общую вершину, то этот $2$-цикл пустой.
    Поэтому $C + \widehat C = 0$, т.~е. $C=\widehat C$.

    (b) Доказывается построением биекции между множеством $2$-циклов в $[n]$ и множеством подмножеств двумерных граней в $[n-1]$, аналогично утверждению~\ref{pr:orinumber2}.a. 
(Эта биекция дает единственность разложения на суммы тетраэдров вида $T_{\{i,j,k,n\}}$ в п.~(a).)
\end{proof}

%    Пусть $R$ "--- набор $4$-элементных подмножеств множества $[n]$, такой что любое $3$-элементное подмножество множества $[n]$ содержится в четном количестве [L: $4$-элементных подмножеств, являющихся элементами набора $R$] [S:элементов набора $R$]. 
%Тогда существует семейство $R'$, состоящее из $5$-элементных подмножеств множества $[n]$ такое, что [L: любое  $4$-элементное подмножество, являющееся элементом набора $R$,] [S: любой элемент набора $R$] содержится в нечетном количестве [L: $5$-элементных подмножеств, являющихся элементами семейства $R'$] [S: элементов семейства $R'$], а любое другое $4$-элементное подмножество множества $[n]$ --- в четном.

\begin{pr}[загадка]\label{p:high}
    Придумайте и докажите многомерные аналоги утверждений \ref{prop:2-cyc}-\ref{l:gencom}.
\end{pr}

Частный случай $l=2$ следующей конструкции соответствует утверждениям~\ref{pr:knn1} и \ref{pr:knn2}.
 
\begin{pr}\label{pr:rook}
    Назовем~\emph{рядом} подмножество множества~$[n]^\ell$ векторов длины $\ell$, получаемое фиксацией всех координат, кроме одной. 
    Определим \emph{ладейный цикл} как подмножество множества~$[n]^\ell$, содержащее четное количество векторов в каждом ряду.
    Например, пустое множество является ладейным циклом.
    
    Назовем \emph{параллелепипедом} подмножество $P_1\times\ldots\times P_{\ell}\subset[n]^\ell$, где $P_i$ есть $2$-элементные подмножества множества $[n]$.
    Очевидно, что любой параллелепипед является ладейным циклом.
    
    (a) Сумма ладейных циклов является ладейным циклом.
    
    (b) Если в ладейном цикле нет элементов из $[n-1]^\ell$, то этот ладейный цикл пустой. 
    
    (c) Любой ладейный цикл является суммой нескольких параллелепипедов. 
    
    (d) Сколько имеется ладейных циклов в $[n]^\ell$?
\end{pr}

\begin{proof}[Доказательство п.~(с).]
    Для $a\in[n-1]^\ell$ обозначим через $P(a):=\{n,a_1\}\times\ldots\times\{n,a_\ell\}$ параллелепипед с противоположными вершинами $a$ и $(n,\ldots,n)$. 
    Достаточно доказать, что любой ладейный цикл $C\subset[n]^\ell$ равен сумме $\widehat C$ параллелепипедов $P(a)$ по всем $a\in C\cap[n-1]^\ell$. 
    Сумма $C+\widehat C$ является ладейным циклом.
    Так как $P(a)\cap[n-1]^\ell=\{a\}$, то $(C+\widehat C)\cap[n-1]^\ell=\emptyset$.
    По п.~(b) имеем $C+\widehat C = \emptyset$, т.~е. $C=\widehat C$.
\end{proof}

\begin{proof}[Указание к п.~(d) для $\ell = 2$.]
    Ребро~$ab'\in K_{n,n}$ соответствует паре $(a,b)\in [n]^2$.
    Вершина~$a\in[n]$ соответствует ряду $x=a$,
    %\{(a,i)\ |\ i\in[n]\}$
    вершина~$b'\in[n]'$ соответствует ряду $y=b$.
    %$\{(i,b)\ |\ i\in[n]'\}$
    Ребро, содержащее вершину, соответствует элементу множества $[n]^2$, принадлежащему ряду.
    Тогда $1$-циклы в~$K_{n,n}$ соответствуют ладейным циклам в $[n]^2$.
    В частности, циклы длины~$4$ соответствуют параллелепипедам (которые при $\ell=2$ можно назвать параллелограммами).
\end{proof}
 
Назовем \emph{$2$-гиперграфом} (двумерным гиперграфом, или $3$-однородным гиперграфом) пару $(V,F)$ из конечного множества~$V$ и семейства~$F$, состоящего из нескольких $3$-элементных подмножеств множества $V$. 
Назовем \emph{гранью} элемент семейства $F$. 
Назовем \emph{ребром} $2$-элементное подмножество, содержащееся в некоторой грани. 
Количества вершин, ребер и граней обозначаются $V$, $E$ и $F$ (причем $V$ и $F$ --- так же, как множества). 

Назовем \emph{$2$-циклом} (симплициальным, по модулю $2$) множество $C$ граней такое, что каждое ребро содержится в четном количестве граней из $C$.  

\begin{remark}\label{p:cychyp}
    Существуют $2$-гиперграф и взаимно однозначное соответствие между множеством его граней и $[n]^3$, которое дает взаимно однозначное соответствие между $2$-циклами и ладейными циклами (см.~задачу \ref{pr:rook}).
    
    Построим такой $2$-гиперграф. 
    В качестве его вершин возьмем $[n]\times[3]$.
    Грани образуются тройками вершин из различных строк.   
    Взаимно однозначное соответствие переводит грань $\{(a,1),(b,2),(c,3)\}$ в вершину $(a,b,c) \in [n]^3$.
    Например, октаэдр в $2$-гиперграфе переходит в параллелепипед в $[n]^3$.
\end{remark}

Назовем $2$-гиперграф \emph{связным}, если связен граф, являющийся объединением его ребер.

\begin{example}[ср.~утверждения~\ref{pr:orinumber2}.a и \ref{p:homol}] \label{p:example} 
    Существуют два связных $2$-гиперграфа, имеющие одинаковое количество вершин, ребер и граней, но разное количество $2$-циклов.
\end{example}

\begin{proof}[Набросок доказательства]
    Возьмем центрально-симметричную триангуляцию $T$ квадрата, настолько мелкую, что никакой треугольник, примыкающий к границе квадрата, не пересекает симметричный ему треугольник. 
    Обозначим через $T'$ гиперграф, полученный из гиперграфа $T$ склейкой центрально-симметричных вершин на границе квадрата. При этом  склеиваются центрально-симметричные ребра на границе квадрата. 
    ($T'$ "--- триангуляция \emph{проективной плоскости}.) 
    В гиперграфе $T$ нет ненулевых $2$-циклов. 
    В гиперграфе $T'$ множество всех граней образует ненулевой $2$-цикл. 
    %при отождествлении также получится триангуляция. 
    
    У гиперграфов $T$ и $T'$ одинаковы количества $F$ граней и \emph{эйлеровы характеристики} $V-E+F$. 
    %(о способах простого вычисления эйлеровой характеристики см., например, \cite[\S5.6]{Sk20}).  
    
    (1) К гиперграфу можно добавить грань, имеющую с ним ровно одну общую вершину. 
    
    При этом количество вершин увеличится на 2, ребер "--- на 3, граней "--- на 1.  
    
    (2) К гиперграфу можно добавить грань, имеющую с ним ровно одно общее ребро. 
    
    При этом количество вершин увеличится на 1, ребер "--- на 2, граней "--- на 1.  
    
    Эти операции не меняют количества $2$-циклов. 
    
    Применим операции (1) к тому из гиперграфов $T,T'$, в котором меньше ребер, и такое же число операций (2) к другому. 
    Добьемся равенства количеств ребер (сохраняя равенство количеств граней и эйлеровых характеристик). 
    Получим нужные гиперграфы. 
\end{proof}

\subsection{Соотношения между циклами в гиперграфах *}\label{s:cgra3}

В этом разделе мы затронем два независимых сюжета: утверждения \ref{P:parl}-\ref{p:homol} и \ref{pr:rel}-\ref{pr:rook2}.

\begin{pr} \label{P:parl}
 В~парламенте из $n$~человек имеется несколько (попарно различных по составу) комиссий по три~человека в~каждой. 
Известно, что каждый человек находится в~некоторой комиссии.  
Известно также, что если два человека находятся в~некоторой комиссии, то множество из этих двух человек содержится ровно в~двух комиссиях.
Такие две комиссии называются \emph{смежными}.
Еще известно, что любые две комиссии соединяются цепочкой комиссий, в которой соседние комиссии смежны.
Докажите, что число комиссий не меньше $2n-4$.
\end{pr}

Вот переформулировка и решение на языке гиперграфов. 
Произвольный $2$-гиперграф называется \emph{гранесвязным}, если любые две грани можно соединить цепочкой граней, в которой любые две соседние грани имеют общее ребро. 
 
\begin{proposition}[экстремальное свойство эйлеровой характеристики поверхностей]\label{p:eulex} 
Дан гранесвязный $2$-гиперграф без изолированных вершин, имеющий $V$ вершин и $F$ граней. 

(a) Если к каждому ребру примыкает ровно две грани, то $F\ge2V-4$. 
(Так как в границе каждой грани ровно три ребра, то $2E=3F$, где $E$ "--- количество ребер.
Ввиду этого равенства, неравенство $F\ge2V-4$ равносильно неравенству $V-E+F\le2$.)
     
(b) Если к каждому ребру примыкает не более двух граней, а к некоторому "--- ровно одна грань, то $V-E+F\le1$, где  $E$ --- количество ребер.
\end{proposition}

\emph{Одномерным остовом} 2-гиперграфа $(V, F)$ называется граф $(V, E)$, где $E$ --- множество ребер.

\begin{proof}[Набросок доказательства утверждения \ref{p:eulex}]\footnote{Это рассуждение можно считать алгебраической формализацией геометрической идеи, изложенной в \cite[доказательство утверждения 2.4.1d']{Sk20}, прямая реализация которой непроста \cite[\S5.1, 5.8]{Sk20}.}
    П.~(a) доказывается аналогично п.~(b) или сводится к нему. 
    Докажем п.~(b). 
    В его предположениях сумма любого непустого набора границ граней непуста. 
    Так как гиперграф гранесвязен, то его одномерный остов связен.
    Поэтому ввиду отсутствия изолированных вершин и утверждения \ref{pr:orinumber2}.a выполнено $2^F\le 2^{E-V+1}$. 
\end{proof}

Назовем \emph{$1$-циклом} в 2-гиперграфе 1-цикл в его одномерном остове.
Назовем два $1$-цикла в 2-гиперграфе \emph{гомологичными}, если их сумма является суммой границ некоторых граней (ср. с определением перед задачей \ref{p:k21}). 

\begin{pr}\label{pr:homol} В полном 2-гиперграфе любые два 1-цикла гомологичны.
\end{pr}

\begin{proposition}\label{p:homol}
    (a) Количество классов гомологичности $1$-циклов и количество $2$-циклов в любом гиперграфе является степенью числа 2.
    
    (b) Дан $2$-гиперграф c $V$ вершинами, $E$ ребрами и $F$ гранями.
    Обозначим через $b_0$ количество компонент связности, через $2^{b_1}$ количество классов гомологичности $1$-циклов, через $2^{b_2}$ количество $2$-циклов.
    Тогда $b_0-b_1+b_2=V-E+F$.
\end{proposition}
    
\begin{proof}[Набросок доказательства]
    Число $2^{b_1}$ равно отношению количества $2^{E-V+b_0}$ всех $1$-циклов (обобщение утверждения \ref{pr:orinumber2}.a) к количеству $2^{F-b_2}$ тех $1$-циклов, которые гомологичны нулю (т.~е. равны сумме границ некоторых граней).\footnote{Утверждение \ref{p:homol} достаточно доказать для связных $2$-гипеграфов. 
    Однако приведенное доказательство (без такой редукции) интересно тем, что обобщается на многомерный случай \cite[Утверждение~10.6.8]{Sk20}.}
\end{proof}

\begin{pr}\label{pr:rel}
    (a) Для любого $5$-элементного подмножества $A\subset[n]$ выполнено $\sum\limits_{j\in A}T_{A-\{j\}}=0$.  

    (b) Любое линейное соотношение между тетраэдрами является суммой нескольких соотношений из~п.~(a).

    Строгая формулировка п.~(b) аналогична строгой формулировке утверждения~\ref{pr:rel1}.b и получается из последней заменой чисел $2,3,4$ на числа $3,4,5$, соответственно (мы отождествляем тетраэдры в $[n]$ с $4$-элементными подмножествами $[n]$).
    П.~(b) является многомерным аналогом утверждения~\ref{l:gencom}.b; доказательство аналогично.
\end{pr}

 \begin{proposition} \label{pr:rook2}
    (a) Для любых попарно различных $a,b,c\in[n]$ и параллелепипеда $P\subset[n]^{\ell-1}$ выполнено $P\times\{a,b\}+P\times\{b,c\}+P\times\{c,a\}=0$.  
    
    (b) Любое линейное соотношение между параллелепипедами в $[n]^\ell$ является суммой нескольких соотношений из п. (a) и соотношений, получаемых из п. (a) перестановками координат.
    \emph{(Строгая формулировка аналогична строгим формулировкам утверждений~\ref{pr:knn2}.b и~\ref{pr:rel}.b.)}
\end{proposition}

\begin{proof}[Доказательство п.~(b).]
    Любой параллелепипед~$P$ с некоторым $P_i\subset[n-1]$ является суммой нескольких заданных соотношений и двух параллелепипедов, полученных из~$P$ заменой на~$n$ одного из двух элементов в~$ P_i$.
    Значит, в каждом из этих двух параллелепипедов количество пар $P_j$, не содержащих $n$, меньше, чем в~$P$.
    Следовательно, любое соотношение между параллелепипедами является суммой нескольких заданных соотношений и соотношения $P(a_1)+\ldots+P(a_s)=0$ для некоторых попарно различных $a_1,\ldots,a_s\in[n-1]^ {\ell}$.
    В последнем соотношении нет слагаемых, поскольку
    \[\emptyset=(P(a_1)+\ldots+P(a_s))\cap[n-1]^{\ell}=\{a_1,\ldots,a_s\}.
    \]
    Здесь второе равенство выполняется в силу $P(a)\cap[n-1]^{\ell}=\{a\}$. 
\end{proof}

%%%%%%%%%%%%%%%%%%%%%%%%%%%%%%%%
\subsection{Двумерные циклы в произведении графов}\label{s:cydepr}

\emph{(Комбинаторным) произведением} графов $K$ и $L$ называется множество 
\[
    K\times L := \{(\sigma,\tau)\ :\ \text{$\sigma,\tau$ являются ребрами графов $K,L$ соответственно}\}
\]
упорядоченных пар $(\sigma, \tau)$ ребер $\sigma, \tau$ графов $K,L$ соответственно. 

Это определение не является общепринятым (обычно так обозначают декартово произведение графов,  см. \S\ref{s:prod}). 
Такие комбинаторные произведения можно формально свести к гиперграфам, определенным в \S\ref{s:chygra1}.  
Но читателю незачем тратить время на формализацию связи между аналогичными объектами, пока такая формализация не потребуется для какого-нибудь яркого результата.

В этом тексте случай $K\ne L$ используется только для определения тора и в утверждении \ref{p:tree2}.b. 

Пары из $K \times L$ будем называть \emph{клетками}.
Клетки $(\alpha,\beta)$ и $(\sigma, \tau)$ в $K \times L$ называются \emph{смежными}, если 

$\bullet$ либо $\alpha=\sigma$ и $\beta,\tau$ имеют общую вершину,

$\bullet$ либо $\beta=\tau$ и $\alpha,\sigma$ имеют общую вершину\footnote{\label{fn:box}Т.~е. если эти клетки имеют <<общее ребро>> в $K \Box L$. 
Тогда если склеить смежные клетки по их <<общим ребрам>> в $K \Box L$, то получится произведение, обсуждаемое в~\S\ref{s:prod}.}.

\emph{Квадратом} графа $K$ называется $K^2 := K \times K$. 
\textbf{Клеточным $2$-циклом} (в $K^2$) называется подмножество $C \subset K^2$ такое, что для каждой вершины $a$ и ребра $\beta$ графа $K$

$\bullet$ имеется четное количество ребер $\alpha$ в $K$ таких, что $\alpha\ni a$ и $(\alpha,\beta)\in C$, а также 

$\bullet$ имеется четное количество ребер $\alpha$ в $K$ таких, что $\alpha\ni a$ и $(\beta,\alpha)\in C$.

Иными словами, $C\subset K^2$ является клеточным $2$-циклом, если к любому ребру <<примыкает>> четное число клеток из $C$.

Сравните следующие свойство \ref{p:ce2cy}.(iii) и утверждение \ref{p:2cyclesum} с аналогичными утверждениями \ref{prop:2-cyc}, \ref{pr:rook}.a. 
 
\begin{proposition}\label{p:ce2cy} 
    Следующие условия равносильны для подмножества $C \subset K^2$:
    
    \quad(i) $C$ является клеточным $2$-циклом; 
    
    \quad(ii) для каждого ребра $\sigma$ в $K$ оба множества (<<вертикальное сечение>> и <<горизонтальное сечение>>) 
        \[
            C_{\sigma,\cdot}:=\bigl\{\tau\ :\ (\sigma,\tau)\in C\bigr\}\quad\text{и}\quad 
            C_{\cdot,\sigma}:=\bigl\{\tau\ :\ (\tau,\sigma)\in C\bigr\}
        \]
        являются $1$-циклами в $K$; 
    
    \quad(iii) сумма границ $\sigma\Box\tau$ по всем $(\sigma,\tau)\in C$ равна нулю.  
\end{proposition}

\begin{proof}[Доказательство]  
Для вершины $v$ графа $K$ обозначим $\delta v := \{\tau\ :\ \tau \ni v\}$. 
Условие (i) равносильно условию 

%нужно писать \delta v \times \{\sigma\} чтобы это было декартовым произведением множеств, но мы не пишем
(i') $|C \cap (\delta v \times \sigma)|$ и $|C \cap (\sigma \times \delta v)|$ четны для любых ребра $\sigma$ и вершины $v$ графа $K$. 

\smallskip
\emph{Доказательство равносильности (ii) $\Leftrightarrow (i')$}. 
Условие (ii) равносильно условию 

(ii') $|C_{\cdot, \sigma} \cap \delta v|$ и $|C_{\sigma, \cdot} \cap \delta v|$ четны для любых ребра $\sigma$ и вершины $v$ графа $K$.

Последнее условие равносильно условию (i') ввиду равенств
$$|C \cap (\delta v \times \sigma)| = |C_{\cdot, \sigma} \cap \delta v|\quad\text{и}\quad|C \cap (\sigma \times \delta v)| = |C_{\sigma, \cdot} \cap \delta v|$$
для любых ребра $\sigma$ и вершины $v$ графа $K$.

\smallskip
\emph{Доказательство равносильности (iii) $\Leftrightarrow (i')$}. Обозначим через $S$ сумму границ $\sigma \Box \tau$ по всем $(\sigma, \tau) \in C$. Условие (iii) равносильно условию (i') ввиду равносильностей
$$v \Box \sigma \not \in S \Leftrightarrow |C \cap (\delta v \times \sigma)| \text{ четно} \quad\text{и}\quad 
\sigma \Box v \not \in S \Leftrightarrow |C \cap (\sigma \times \delta v)| \text{ четно}$$
для любых ребра $\sigma$ и вершины $v$ графа $K$.
\end{proof}

Далее под клеточным $2$-циклом будем понимать подмножество квадрата $K^2$, удовлетворяющее свойству \ref{p:ce2cy}(ii).

\begin{proposition}\label{p:2cyclesum} 
    Сумма по модулю $2$ клеточных $2$-циклов является клеточным $2$-циклом.
\end{proposition}

\emph{Тором} в $K^2$ называется произведение простых циклов в $K$. 
 
\begin{proposition}\label{t:torus} 
    (a) Тор является клеточным $2$-циклом.
    
    (b)  В торе имеется ровно один непустой клеточный $2$-цикл.
\end{proposition} 

\begin{proof}[Доказательство.] 
    Возьмем простые циклы $Z_1, Z_2$ в $K$ и тор $Z := Z_1 \times Z_2 \subset K^2$.  
    
    (a) Для любого ребра $\sigma$ графа $K$ множество $Z_{\sigma,\cdot}$ либо равно $Z_2$, либо пусто; множество $Z_{\cdot, \sigma}$ либо равно $Z_1$, либо пусто.
    
    (b) Возьмем непустой клеточный $2$-цикл $C$ в торе $Z$.
    Тогда найдется клетка $(\sigma, \tau) \in C$.
    Выберем произвольную клетку $(\alpha, \beta) \in Z$.
    Так как сечение $C_{\cdot, \tau}$ непусто, то $C_{\cdot, \tau} = Z_1$. 
    В частности, $(\alpha, \tau) \in C$.
    Так как сечение $C_{\alpha, \cdot}$ непусто, то $C_{\alpha, \cdot} = Z_2$.
    В частности, $(\alpha, \beta) \in C$. 
    Следовательно, $C = Z$.
\end{proof}

\begin{proposition}[ср. утверждение \ref{pr:orinumber2}.b]\label{p:tree2} 
    Для дерева $T\subset K$ любой клеточный $2$-цикл, содержащийся в  
    
    (a) $T^2$; \quad (b) $\overline T := T\times K\cup K\times T$,
    
    является пустым. 
\end{proposition} 

\begin{proof}
    (a) Пусть $C$ "--- клеточный $2$-цикл в $T^2$. 
    Поскольку $T$ "--- дерево, из утверждения~\ref{pr:orinumber2}.b следует, что $C_{\sigma,\cdot}=0$ для любого ребра $\sigma$ дерева $T$. 
    Поэтому $C=0$. 
    
    (b) Пусть $C$ "--- клеточный $2$-цикл в $\overline T$.
    Поскольку $T$ "--- дерево, из утверждения~\ref{pr:orinumber2}.b следует, что $C_{\sigma,\cdot}=0$ для любого ребра $\sigma$ графа $K \setminus T$. 
    Поэтому $C\subset T\times K$. 
    Поскольку $T$ "--- дерево, из утверждения~\ref{pr:orinumber2}.b следует, что $C_{\cdot,\sigma}=0$ для любого ребра $\sigma$ графа $K$. 
    Поэтому $C=0$.
\end{proof}

Нетривиальные примеры клеточных $2$-циклов приведены в утверждении \ref{t:dpcycle}. 

\begin{proposition}\label{t:kunn} 
    (a) Любой клеточный $2$-цикл в квадрате графа является суммой нескольких торов. 
         
    (b) Для связного графа $K$ с $V$ вершинами и $E$ ребрами количество клеточных $2$-циклов в $K^2$ равно $2^{(E-V+1)^2}$. 
\end{proposition}

Формулировки этого утверждения и следующей леммы, а также их доказательства, аналогичны формулировкам и доказательствам утверждений \ref{pr:simsum}, \ref{pr:orinumber2}.a и леммы \ref{lem:sumcyc}.

%преднамеренный повтор определения
Обозначим через $T$ остовное дерево в связном графе $K$.
Для ребра $\sigma \in K\setminus T$ обозначим через $\widehat\sigma = \widehat\sigma_T$ простой цикл в $K$, образованый ребром $\sigma$ и простым путем в $T$, соединяющим концы ребра $\sigma$.
  
\begin{lemma}\label{lem:sum2cyc}
    Для любых остовного дерева $T$ в связном графе $K$ и клеточного $2$-цикла $C$ в $K^2$
    \[
        C = \widehat C := \displaystyle\sum_{(\sigma,\tau)\in C\setminus\overline T} \widehat\sigma\times\widehat\tau.
    \]
\end{lemma}

Обозначим через $H_2(K^2)$ множество всех клеточных $2$-циклов в $K^2$ с операцией сложения. 

\begin{proof}[Доказательство утверждения~\ref{t:kunn}]
    П.~(а) следует из леммы \ref{lem:sum2cyc}, примененной к каждой компоненте связности графа.
    
    (b) Ввиду того, что $K^2 \setminus \overline T = (K \setminus T)^2$, достаточно доказать, что существует взаимно однозначное соответствие между $H_2(K^2)$ и множеством $2^{K^2\setminus \overline T}$
    подмножеств ребер из $K^2\setminus \overline T$.
    Определим отображения
    $$\varphi\colon H_2(K^2) \to 2^{K^2\setminus\overline T} \quad\text{формулой}\quad \varphi C := C\setminus\overline T,\quad$$
    $$\widehat\varphi \colon 2^{K^2\setminus\overline T} \to H_2(K^2) \quad\text{формулой}\quad \widehat\varphi D := \sum\limits_{(\sigma,\tau) \in D} \widehat\sigma\times\widehat\tau.$$
    Поскольку $\varphi(\widehat \sigma \times \widehat \tau) = \{(\sigma, \tau)\}$, то $\varphi\widehat\varphi D = D$ для любого $D \subset K^2\setminus\overline T$.
    Обратно, для любого $C \in H_2(K^2)$ имеем $\widehat \varphi \varphi C = \widehat{C} = C$, где второе равенство есть лемма \ref{lem:sum2cyc}.
    Таким образом, $\varphi$ и $\widehat \varphi$ являются взаимно однозначными соответствиями.
\end{proof}

Базис множества клеточных $2$-циклов определяется аналогично случаю $1$-циклов, см. \S\ref{s:cgra1}.

\begin{theorem}[Кюннет]\label{t:kunneth} 
    Для 1-циклов $C_1, \ldots, C_q$ в $H_1(K)$ обозначим 
    $$B = B(C_1, \ldots, C_q): = \{C_i \times C_j\ :\ i, j \in [q] \}.$$

    (a) Существует базис $C_1,\ldots,C_q$ в $H_1(K)$, такой что $B(C_1, \ldots, C_q)$ "--- базис в $H_2(K^2)$. 
    
    (b) Если $C_1,\ldots,C_q$ "--- базис в $H_1(K)$, то $B(C_1, \ldots, C_q)$ "--- базис в $H_2(K^2)$.
    
    (Т.~е. $H_2(K^2)\cong H_1(K)\otimes H_1(K)$.)
\end{theorem} 

\begin{proof}[Набросок доказательства]
    (a) В качестве базиса $C_1, \ldots, C_q$ можно взять множество всех $1$-циклов $\widehat\sigma$, где $\sigma \in K \setminus T$.
    Из леммы \ref{lem:sum2cyc} следует, что $B$ составляет систему порождающих в $H_2(K^2)$. Можно проверить, что $B$ --- базис.
    
    (b) Любой клеточный $2$-цикл в $K^2$ есть сумма некоторых клеточных $2$-циклов из $B$,  и такое представление единственно.
    Мы доказываем это в следующих двух абзацах соответственно.
    
    Каждый из простых циклов $Z_1, Z_2$ в $K$ является суммой некоторых $1$-циклов из $C_1, \ldots, C_q$. 
    Поэтому каждый тор $Z_1 \times Z_2 \subset K^2$ является суммой некоторых клеточных $2$-циклов из $B$. 
    По утверждению~\ref{t:kunn}.a то же справедливо для произвольного клеточного $2$-цикла.

    Единственность достаточно доказать для связного графа $K$.
    Поскольку $C_1,\ldots, C_q$ "--- базис в $H_1(K)$, то $2^q = |H_1(K)|$. По утверждению \ref{pr:orinumber2}.a имеем $q = E - V + 1$.
    Следовательно, $|B| = q^2 = (E - V + 1)^2$.
    По утверждению~\ref{t:kunn}.b получаем $|H_2(K^2)| = 2^{(E - V + 1)^2}$.
    Это число равно числу линейных комбинаций клеточных $2$-циклов из $B$. 
    Поэтому представление единственно. 
\end{proof}

%%%%%%%%%%%%%%%%%%%%%%%%%%%%%%

\subsection{Двумерные циклы во взрезанном квадрате графа}\label{s:2delsqu}

Говоря нестрого (ср. с \S\ref{s:prod}), \textit{взрезанный квадрат} графа $K$ "--- множество упорядоченных пар $(x,y)$ точек графа $K$ таких, что $x,y$ не принадлежат соседним ребрам.
Как и квадрат графа, это множество можно представить в виде объединения прямоугольников, см. сноску \ref{fn:box}.
Строго говоря, \textbf{комбинаторным взрезанным квадратом} графа $K$ называется
\[
K^{\underline2} := \{(\sigma,\tau)\ :\ \sigma,\tau\text{ "--- несмежные ребра графа $K$}\}.
\]

Например, произведение любых вершинно-непересекающихся циклов в $K$ является клеточным $2$-циклом в $K^{\underline2}$ (по утверждению~\ref{t:torus}.a).

\begin{remark}\label{remark:2delsqu} 
    (a) Комбинаторный взрезанный квадрат и его обобщения имеют многочисленные  приложения. 
    Например, для распознавания реализуемости гиперграфов в многомерных евклидовых пространствах или для различения их реализаций, см. обзоры \cite{Sk06, Sk18}.

    (b) Подмножество $K^{\underline2}\subset K^2$ меньше, чем $K^2$, поэтому иногда его легче нарисовать. 
    Например, $K_2^{\underline2}=K_3^{\underline2}=K_{n,1}^{\underline2}=\emptyset$. 

    (c) Склеим смежные клетки вдоль их <<общего ребра>>.
    Тогда комбинаторные взрезанные квадраты пути на 5 вершинах, цикла на $5$ вершинах, графа $K_4$, графа $K_{3,3}$, графа $K_5$ <<выглядят как>> несвязное объединение двух дисков, кольцо, кубооктаэдр без треугольных граней (рис.~\ref{f:del3}), сфера с четырьмя ручками, сфера с шестью ручками соответственно. 
    Действительно, для $K=K_{3,3}$ и $K=K_5$ можно доказать, что $K^{\underline 2}$ есть \emph{замкнутая связная ориентируемая двумерная поверхность} (ср. с утверждениями \ref{p:eulex} и \ref{t:dpcycle}), и применить теорему о классификации двумерных поверхностей (см.~подробности в~\cite[текст после~3.4.1]{Sa91}). 
\end{remark}

Из этого раздела далее используются только определение комбинаторного взрезанного квадрата и лемма \ref{t:2cyc-bij}.b.

\begin{pr}\label{t:dpcycle0} 
    Любой клеточный $2$-цикл в $K^{\underline2}$ пуст для 
    
    (a) простого цикла $K$; \quad (b) $K=K_{n,2}$; \quad (c) $K=K_4$;  
    
    (d) \emph{колеса} $K$, т.~е. для графа с множеством $\{0\}\cup[n]$ вершин, а также ребрами $\{n,1\}$, $\{0,j\}$ и $\{j,j+1\}$ для $j\in[n-1]$.
\end{pr} 

\begin{proof}[Набросок доказательства п.~(a-c)] 
    Пусть $C$ "--- клеточный $2$-цикл в $K^{\underline 2}$.
    Для любого ребра $\sigma$ графа $K$ сечение $C_{\sigma, \cdot}$ является подмножеством
    
    (a) графа-пути $K - \sigma$; \quad
    
    (b) графа $K_{n, 1} \cong K - v$ для некоторой вершины $v \in \sigma$;
    
    (c) множества $\{\tau\}$, где $\tau$ "--- единственное несмежное ребро с $\sigma$.
    
    Следовательно, $C_{\sigma, \cdot}$ "--- подмножество дерева.
    Тогда по утверждению \ref{pr:orinumber2}.b получаем $C_{\sigma, \cdot} = 0$.
    Тогда~$C~=~0$.

    \emph{Набросок доказательства п.~(d).} Пусть $C$ "--- клеточный $2$-цикл в $K^{\underline 2}$. 
    
    Так как и $C_{0j, \cdot}$, и $C_{\cdot, 0j}$ "--- подмножества графа-пути $K - 0 - j$ для любого $j \in [n]$, то $C_{0j, \cdot} = C_{\cdot, 0j} = 0$ для любого $j \in [n]$.
    Тогда $C\subset(K - 0)^{\underline 2}$ для графа-цикла $K - 0$, что сводит доказательство к п.~(a).
    
    (Это решение обобщает п.~(c), так как $K_4$ является колесом при $n = 3$.)
\end{proof}

\begin{proposition}\label{t:dpcycle} 
    (a) Подмножество $K_{3,3}^{\underline2}$ в $K_{3,3}^2$ является клеточным $2$-циклом.
    
    (b) Подмножество $K_{5}^{\underline2}$ в $K_{5}^2$ является клеточным $2$-циклом.
\end{proposition}

\begin{proof}[Доказательство] 
    Положим $C := K^{\underline 2}_{3, 3}$ ($C := K^{\underline 2}_5$). 
    Для произвольного ребра $\sigma$ графа $K_{3, 3}$ ($K_5$) каждое из множеств $C_{\sigma, \cdot}$ и $C_{\cdot, \sigma}$ есть цикл длины~4 (длины~3).
\end{proof}

\begin{pr}\label{t:dpcycle2} 
    (a) В $K_{3,3}^{\underline2}$ имеется ровно один непустой клеточный $2$-цикл.
    
    (b) В $K_{5}^{\underline2}$ имеется ровно один непустой клеточный $2$-цикл.
    %(c) Существует граф $K$, такой что $K^{\underline2}$ имеет непустое собственное подмножество, являющееся клеточным $2$-циклом.
\end{pr} 

\begin{proof}[Набросок доказательства п.~(a)] (Ср. с определением гранесвязности перед утверждением \ref{p:eulex}.)
    Пусть $C$ "--- непустой клеточный $2$-цикл в $K^{\underline 2}_{3, 3}$.
    Тогда найдется пара несмежных ребер $\sigma, \tau$ такая, что $(\sigma, \tau) \in C$.
    Следовательно, $\tau \in C_{\sigma, \cdot}$.
    Выберем произвольную клетку $(\alpha, \beta) \in K^{\underline 2}_{3, 3}$.
    Для любых двух ребер $\sigma, \alpha$ в 
    $K_{3,3}$ существует ребро $\gamma$, такое что $\gamma$ не смежно ни с $\sigma$, ни с $\alpha$.
    Любое сечение клеточного $2$-цикла $C$ либо пусто, либо является циклом длины $4$. Так как сечение $C_{\sigma, \cdot}$ непусто, то $\gamma \in C_{\sigma, \cdot}$, а потому $(\sigma, \gamma) \in C$.
    Так как сечение $C_{\cdot, \gamma}$ непусто, то $\alpha \in C_{\cdot, \gamma}$, а потому $(\alpha, \gamma) \in C$.
    Так как сечение $C_{\alpha, \cdot}$ непусто, то $\beta \in C_{\alpha, \cdot}$, а потому $(\alpha, \beta) \in C$.
\end{proof}
 
%(c) Возьмем $K := K_6$ и $C := 123 \times 456$. 
%Так как $(12, 34) \notin C$, произведение простых циклов $C$ есть непустое собственное подмножество комбинаторного взрезанного квадрата $K^{\underline 2}$. 
%По утверждению~\ref{t:torus}.a, множество $C$ является клеточным $2$-циклом. 

%\begin{pr}\label{t:kunnethsym} Существует граф $K$ и клеточный $2$-цикл в $K^{\underline2}$, не являющийся суммой произведений вершинно-непересекающихся циклов. \end{pr}
%\emph{Набросок доказательства.} 
%Возьмем $K := K_5$ (или $K := K_{3,3}$) и $C := K^{\underline2}$. 
%По утверждению~\ref{t:dpcycle}.a (или \ref{t:dpcycle}.c), $C$ "--- клеточный $2$-цикл. 

Представления клеточного $2$-цикла $K_{3,3}^{\underline2} \subset K_{3,3}^2$ ($K_5^{\underline2} \subset K_5^2$) в виде суммы нескольких торов в $K_{3,3}^2$ (в $K_5^2$) можно получить из утверждения \ref{t:kunn}.a.
Поскольку в графе $K_{3,3}$ ($K_5$) нет двух непустых вершинно-непересекающихся циклов, то этот клеточный $2$-цикл не является суммой произведений вершинно-непересекающихся циклов.

Граф $\t{K_n}$ определен перед утверждением \ref{l:tkn}. 

\begin{lemma}\label{t:2cyc-bij} 
    (a) Существует взаимно однозначное соответствие между $K_{n,n}^{\underline2}$ и $\t{K_n}^2$, сохраняющее смежность клеток.
    
    (b) Существует взаимно однозначное соответствие $H_2(K_{n,n}^{\underline2}) \rightarrow H_2(\t{K_n}^2)$.
\end{lemma}

\begin{proof}[Набросок доказательства]
    (a) Определим отображение $f\colon K_{n, n}^2 \rightarrow K_{n,n}^2$  формулой
    $$f(\sigma_1\sigma_2',\tau_1\tau_2') := (\sigma_1\tau_1',\sigma_2\tau_2').$$
    (Напомним, что для $a,b\in[n]$ мы обозначаем через $ab'$ ребро, соединяющее вершины $a$ и $b'$.)
    Отображение $f$ является взаимно однозначным соответствием, так как $f^2 = \id$.
    Докажем в следующем абзаце, что $f$ сохраняет смежность клеток.
    
    Рассмотрим пару смежных клеток $(\sigma, \alpha)$ и $(\tau, \beta)$.
    По условию смежности, они имеют общее ребро.
    Без ограничения общности, будем считать, что $\alpha = \beta$.
    По условию смежности, ребра $\sigma$ и $\tau$ имеют общую вершину.
    Обозначим через $\sigma_1, \sigma_2$ и $\tau_1, \tau_2$ концы ребер $\sigma$ и $\tau$ соответственно.
    Тогда либо $\sigma_1 = \tau_1$, либо $\sigma_2 = \tau_2$.
    В обоих случаях клетки $f(\sigma, \alpha)$ и $f(\tau, \beta)$ смежны.
(Например, в случае $\sigma_1 = \tau_1$ клетки $f(\sigma,\alpha)$ и $f(\tau,\alpha)$ имеют общее ребро $\sigma_1\alpha_1' = \tau_1\alpha_1'$.)
     
    Следующие условия эквивалентны для ребер $\sigma, \tau \in K_{n,n}$ :
    
    $\bullet~ (\sigma, \tau) \in K_{n, n}^{\underline{2}}$;
    
    $\bullet~ \sigma$ и $\tau$ несмежны;
    
    $\bullet~ \sigma_1 \neq \tau_1$ и $\sigma_2 \neq \tau_2$;
    
    $\bullet~ f(\sigma, \tau)
    \in \t{K_n}^2$.
    
    Поэтому сужение отображения $f$ на $K_{n, n}^{\underline{2}}$ является взаимно однозначным соответствием между $K_{n, n}^{\underline{2}}$ и $\t{K_n}^2$.
    
    (b) Взаимно однозначное соответствие $g$ из п.~(a) переводит клеточные $2$-циклы в клеточные $2$-циклы, поскольку сохраняет смежность.
    Поэтому оно индуцирует отображение $\widehat g\colon H_2(K_{n,n}^{\underline2})\to H_2(\t{K_n}^2)$.
    Отображение $\widehat g$ является взаимно однозначным соответствием, поскольку $g$ является таковым.
\end{proof}

\begin{remark} \label{remark:2cycl-bij}
    (a) Пусть дан набор квадратов с ориентированными сторонами. 
    По разбиению этих сторон на пары построим поверхность склейкой сторон в каждой паре.
    Будем говорить, что два квадрата \emph{смежны при разбиении}, если какие-то их стороны принадлежат одной паре. 
    Оказывается, что имеются два таких набора из одинакового числа квадратов и два их разбиения, при которых 
    
    $\bullet$ построенные по этим разбиениям поверхности <<различны>> (т.е. не гомеоморфны), но 
    
    $\bullet$ квадраты \emph{одинаково смежны} (т.е. можно так занумеровать квадраты первого набора и квадраты второго набора, что квадраты $k$ и $l$ в одном наборе смежны тогда и только тогда, когда квадраты $k$ и $l$ в другом наборе смежны).  
    
    (b) Такие разбиения строятся, например, при помощи леммы \ref{t:2cyc-bij}.a.
    По ней <<тела>> клеточных $2$-циклов $K_{n,n}^{\underline2}$ и $\t{K_n}^{2}$ (ср. с замечанием \ref{r:body}.a) получены склейкой одинакового числа одинаково смежных квадратов.
    (Эти тела получены из $K_{n,n}^{\square \underline2}$ и $\t{K_n}^{\square 2}$ <<заклейкой>> каждой границы квадратом.)
    В следующих двух абзацах мы объясним, почему эти тела различны для $n = 3$.
    
    Геометрически, тела клеточных $2$-циклов $\t{K_3}^{2}$ и $K_{3,3}^{\underline2}$ являются тором и сферой с четырьмя ручками соответственно (см. замечание \ref{remark:2delsqu}.c).
    
    Обосновать предыдущее предложение не так просто, но эти тела можно различить алгебраически. 
    Достаточно посмотреть на инвариант тел "--- количество $1$-циклов по модулю границ в $\t{K_3}^{\square 2}$ и в $K_{3,3}^{\square \underline2}$.
    В $\t{K_3}^{\square 2}$ имеется четыре $1$-цикла по модулю границ (воспользуйтесь теоремой Кюннета~\ref{stcycles2}.b), в $K_{3,3}^{\square \underline2}$ их $256$ (см. ответ на задачу  \ref{p:knn21}.e').
    
    (c) По лемме \ref{t:2cyc-bij}.b и утверждению \ref{t:kunn}.b в $K_{n,n}^{\underline2}$ ровно $2^{(n^2-3n+1)^2}$ клеточных $2$-циклов.
    (Ср. с утверждениями \ref{t:kunnsym}.b, \ref{t:2cyc-sym-lemma}.b и загадкой \ref{p:sawrong}.d.)
\end{remark}

%%%%%%%%%%%%%%%%%%%%%%%%%%%%%%

\subsection{Симметричные двумерные циклы в квадрате графа *}\label{s:2symsqu}
 
Рассмотрим симметрию (инволюцию) на $K^2$, переставляющую компоненты (т.~е. переставляющую точки  $(x,y)$ и $(y,x)$).  
Рассмотрим соответствующее отображение на клеточных $2$-циклах.

\begin{proposition}\label{p:tsym} 
    (a) Для любых простых циклов $Q,R$ в $K$ \emph{симметризованный тор} $Q\times R+R\times Q$ является симметричным клеточным $2$-циклом.
    
    (b) Сумма по модулю $2$ симметричных клеточных $2$-циклов является симметричным клеточным $2$-циклом.
    
    (c) Существуют граф и симметричный клеточный $2$-цикл в квадрате этого графа, не являющийся суммой нескольких симметризованных торов.
\end{proposition}

\begin{proof}[Набросок доказательства] 
    (a) Симметризованный тор является клеточным $2$-циклом как сумма двух клеточных 2-циклов (см. утверждения \ref{p:2cyclesum} и \ref{t:torus}).
    Причем он симметричен, поскольку указанная сумма симметрична.

    (c) Неформально говоря, следующий пример основан на том, что в симметризованном торе клетки вида $(\sigma, \sigma)$ сокращаются.

    \emph{Пример:} граф $K_3$ и клеточный $2$-цикл $K_3^2$.\\
    \emph{Доказательство.}
    Для любого ребра $\sigma \in K_3$ имеем $(\sigma, \sigma) \in K_3^2$.
    Обозначим через $C$ произвольную сумму нескольких симметризованных торов в $K^2$.
    Для любого ребра $\sigma \in K$ имеем $(\sigma, \sigma) \not \in C$.
    Поэтому $K_3^2$ не является суммой симметризованных торов.
\end{proof}

\begin{proposition}[ср. утверждение \ref{t:kunn}] \label{t:kunnsym} 
    (a) Любой симметричный клеточный $2$-цикл в квадрате графа является суммой нескольких симметризованных торов и квадратов простых циклов.
    
    (b) Для связного графа $K$ с $V$ вершинами и $E$ ребрами обозначим $q = E - V + 1$. 
    Тогда количество симметричных клеточных $2$-циклов в $K^2$ равно $2^{q(q+1)/2}$.
\end{proposition} 

\begin{lemma}\label{lem:sumsym2cyc}
    Для любых остовного дерева $T$ в связном графе $K$ и симметричного клеточного $2$-цикла $C$ в $K^2$
    $$C = \sum\limits_{\substack{\{\sigma, \tau\}\\ (\sigma, \tau) \in C \setminus \overline T\\ \sigma \neq \tau}}(\widehat \sigma \times \widehat \tau + \widehat \tau \times \widehat \sigma) + \sum\limits_{\substack{\sigma\\ (\sigma, \sigma) \in C \setminus \overline T}} \widehat \sigma \times \widehat \sigma,$$
    где $\widehat \sigma$ определено после утверждений \ref{pr:orinumber2} и \ref{t:kunn}.
\end{lemma}

Лемма \ref{lem:sumsym2cyc} следует из леммы \ref{lem:sum2cyc}, симметричности клеточного $2$-цикла $C$ и симметричности множества $\overline{T}$.

\begin{proof}[Набросок доказательства утверждения~\ref{t:kunnsym}] 
    П.~(а) следует из леммы $\ref{lem:sumsym2cyc}$, примененной к каждой компоненте связности графа.
    
    (b) Возьмем взаимно однозначное соответствие из доказательства утверждения \ref{t:kunn}.b.
    Ввиду леммы \ref{lem:sumsym2cyc} симметричные клеточные $2$-циклы в $K^2$ переходят при этом соответствии в симметричные наборы клеток вне $\overline{T}$.
    Симметричные наборы клеток вне $\overline{T}$ разбиваются на $q$ одноэлементных наборов и $\binom{q}{2}$ пар взаимно симметричных клеток.
    Таким образом, количество симметричных наборов клеток вне $\overline{T}$ равно $2^{q + \binom{q}{2}}$.
    (Ср. с доказательством утверждения \ref{prop:inv}.)
\end{proof}

\begin{proposition} \label{prop:symkunneth} 
    Для 1-циклов $C_1, \ldots, C_q$ в $H_1(K)$ обозначим 
    $$B' = B'(C_1, \ldots, C_q): = \{C_i \times C_j + C_j \times C_i,~ C_i \times C_i\ :\ i\neq j,~ i, j \in [q] \}.$$
    
    (a) Существует базис $C_1, \ldots, C_q$ в $H_1(K)$, такой что $B'(C_1, \ldots, C_q)$ --- базис в множестве симметричных клеточных $2$-циклов в $K^2$.
    
    (b) Если $C_1,\ldots,C_q$ "--- базис в $H_1(K)$, то 
    $B'(C_1, \ldots, C_q)$ "--- базис в множестве симметричных клеточных $2$-циклов в $K^2$.
\end{proposition}

\begin{proof}[Набросок доказательства]
    (a) В качестве базиса $C_1, \ldots, C_q$ можно взять множество всех $1$-циклов $\widehat\sigma$, где $\sigma \in K \setminus T$.
    Из леммы \ref{lem:sumsym2cyc} следует, что $B'$ составляет систему порождающих в множестве симметричных клеточных $2$-циклов.
    Можно проверить, что $B'$ --- базис.
    
    (b) Обозначим через $B$ базис в $H_2(K^2)$, полученный из теоремы Кюннета \ref{t:kunneth}.b по базису $C_1, \ldots, C_q$.
    Возьмем симметричный клеточный $2$-цикл $C$ и рассмотрим его представление в виде суммы некоторых элементов из $B$.
    Клеточный $2$-цикл $C$ симметричный.
    Поэтому если $C_i \times C_j$ встречается в этом представлении, то в нем встречается и $C_j \times C_i$.
    Значит, $C$ представим в виде суммы некоторых клеточных $2$-циклов из $B'$.
    
    Единственность представления в виде суммы некоторых элементов из $B'$ следует из единственности представления в виде суммы некоторых элементов из $B$.
\end{proof}

\begin{pr}\label{pr:del} 
    Любой симметричный клеточный $2$-цикл, содержащийся в $K^{\underline2}$, является суммой нескольких симметризованных торов  (лежащих в $K^2 \supset K^{\underline2}$).
\end{pr}

\begin{proof}
    Поскольку $K^{\underline2} \subset K^2$, то по лемме \ref{lem:sumsym2cyc} любой симметричный клеточный $2$-цикл в $K^{\underline2}$ является суммой нескольких симметризованных торов и нескольких квадратов простых циклов.
    Так как в $K^{\underline2}$ нет клеток $(\sigma, \sigma)$ ни для какого $\sigma \in K$, то в указанной сумме имеются только симметризованные торы.
\end{proof}

Симметрия $t\colon \t{K_n}\to\t{K_n}$ определена перед леммой~\ref{pr:knbij}. 
Симметрия $(t\times t)\colon \t{K_n}^2\to\t{K_n}^2$ определяется формулой $(t\times t)(x,y) := (tx,ty)$. 

\begin{proposition}\label{l:h2sym} 
    (a) Взаимно однозначное соответствие из леммы~\ref{t:2cyc-bij}.b отображает клеточные $2$-циклы в $K_{n,n}^{\underline2}$, переставляемые симметрией, в клеточные $2$-циклы в $\t{K_n}^2$, переставляемые симметрией~$(t \times t)$.
    
    (b) Существует взаимно однозначное соответствие между
    симметричными клеточными $2$-циклами $K_{n,n}^{\underline2}$ и $(t \times t)$-симметричными клеточными $2$-циклами в $\t{K_n}^2$. 
(Значит, количества таких клеточных $2$-циклов совпадают, ср. с леммой \ref{t:2cyc-sym-lemma}.b.)
%количества симметричных клеточных $2$-циклов в $K_{n,n}^{\underline2}$ и $(t \times t)$-симметричных клеточных $2$-циклов в $\t{K_n}^2$ совпадают.)
%lemma \ref{t:2cyc-sym-lemma}.c следует из п.~(b) и утверждения \ref{l:h2sym}.b.
\end{proposition}

\begin{proof}[Доказательство] 
    (a) Обозначим симметрию на $K^2$ через $s$, а взаимно однозначное соответствие из леммы~\ref{t:2cyc-bij}.b через $f$.
    Достаточно доказать, что для любой клетки $\alpha \in K_{n,n}^{\underline2}$ выполнено $f(s\alpha) = (t \times t)f(\alpha)$.
    Это верно, поскольку
    $$f(s(\sigma_1\sigma_2', \tau_1\tau_2')) = f(\tau_1\tau_2', \sigma_1\sigma_2') = (\tau_1\sigma_1',\tau_2\sigma_2') = (t \times t)(\sigma_1\tau_1',\sigma_2\tau_2') = (t \times t)f(\sigma_1\sigma_2', \tau_1\tau_2').$$
    П.~(b) следует из п.~(а) и леммы \ref{t:2cyc-bij}.b.
\end{proof}

Назовем \emph{$(t\times t)$-симметризованным тором} для $1$-циклов $Q$ и $R$ в $\t{K_n}$ клеточный $2$-цикл $Q\times R+tQ\times tR$ в $\t{K_n}^2$. 

\begin{lemma}\label{t:2cyc-sym-lemma} 
(a) Любой $(t\times t)$-симметричный клеточный $2$-цикл в $\t{K_n}^2$ является суммой нескольких из следующих слагаемых:

    $\bullet$ $(t \times t)$-симметризованные торы $Q\times R+tQ\times tR$ для циклов $Q$ и $R$ длины $4$ в $\t{K_n}$;
         
    $\bullet$ $\t{K_3}^2$.
    
    (b) Количество $(t \times t)$-симметричных клеточных $2$-циклов в $\t{K_n}^2$ равно $2^{ \frac{q^2 + 1}{2}}$, где $q = n^2 - 3n  + 1$.
\end{lemma}

\begin{proof}[Набросок доказательства]
    Обозначим через $B$ базис в $H_1(\t{K_n})$ из леммы \ref{p:symgra}.b.
    По теореме Кюннета \ref{t:kunneth}.b множество $B \times B := \{Q \times R : Q, R \in B\}$ является базисом в $H_2(\t{K_n}^2)$.
    Поскольку $B$ состоит из $\t{K_3}$ и пар простых циклов, взаимно симметричных относительно $t$, то $B \times B$ состоит из $\t{K_3}^2$ и следующих пар торов, взаимно симметричных относительно $t \times t$:
    $$Q \times R \text{ и } tQ \times tR, \quad \t{K_3} \times R \text{ и } \t{K_3} \times tR, \quad Q \times \t{K_3} \text{ и } tQ \times \t{K_3},$$
    где $Q,R$ --- всевозможные циклы из $B$, не равные $\t{K_3}$.
    
    Тогда следующие клеточные $2$-циклы составляют базис $B_{t \times t}^2$ множества всех $(t\times t)$-симметричных клеточных $2$-циклов в $\t{K_n}^2$:
    $$Q\times R+tQ\times tR, \quad\t{K_3}\times(R+tR), \quad(Q+tQ)\times\t{K_3} \quad\text{и}\quad \t{K_3}^2.$$
    %где $Q,R$ --- всевозможные циклы из $B$, не равные $\t{K_3}$.

    По утверждению \ref{pr:orinumber2}.a имеем $|B| = q$.
    Теперь п. (b) следует из того, что $|B_{t \times t}^2| = \frac{(q-1)^2}{2} + 2\left(\frac{q-1}{2}\right) + 1 = \frac{q^2 + 1}{2}$.

    (a) Для $n < 4$ доказательство тривиально.
    Достаточно доказать п. (a) только для клеточных 2-циклов из базиса $B_{t \times t}^2$.
    
    Для клеточного 2-цикла $\t{K_3}^2$ п. (a) очевиден.
    
    Для $(t \times t)$-симметризованного тора $Q \times R + tQ \times tR$ п. (a) доказывается так.
    Поскольку $Q$ и $R$ --- циклы из $B$, то по лемме \ref{p:symgra}.b они имеют длину 4 или 6.
    Поскольку $n \geq 4$, то каждый из циклов $Q$ и $R$ является суммой трех циклов длины 4: $Q = Q_1 + Q_2 + Q_3$, $R = R_1 + R_2 + R_3$ (рис. \ref{f:cycles64}; если сам цикл $C$ имеет длину 4, то $C = C + C+ C$). 
    Тогда 
    $$Q \times R + tQ \times tR = (Q_1 + Q_2 + Q_3) \times (R_1 + R_2 + R_3) + t(Q_1 + Q_2 + Q_3) \times t(R_1 + R_2 + R_3) =$$
    $$ = (Q_1 + Q_2 + Q_3) \times (R_1 + R_2 + R_3) + (tQ_1 + tQ_2 + tQ_3) \times (tR_1 + tR_2 + tR_3) = \sum_{i=1}^3 \sum_{j  = 1}^3 (Q_i \times R_j + tQ_i \times tR_j).$$

    Доказательство п. (a) для клеточных 2-циклов  $\t{K_3}\times(R+tR)$ и $(Q+tQ)\times\t{K_3}$ аналогично.
\end{proof}

% \begin{remark} \label{t:2cyc-sym-lemma} 
%     \footnote{Мы благодарны Э. Алкину за указание на ошибку в доказательстве леммы 2.5.7.a из второй версии этой статьи на arxiv.}
%     Назовем \emph{$(t\times t)$-симметризованным тором} для $1$-циклов $Q$ и $R$ в $\t{K_n}$ клеточный $2$-цикл $Q\times R+tQ\times tR$ в $\t{K_n}^2$. 
%     Из утверждения \ref{l:h2sym}.b и теоремы  \ref{t:2cyc-sym-bases}.a  следует:
%     \emph{любой $(t\times t)$-симметричный клеточный $2$-цикл в $\t{K_n}^2$ является суммой нескольких из следующих слагаемых:}  
        
%     \emph{$\bullet$ $(t \times t)$-симметризованные торы $Q\times R+tQ\times tR$ для циклов $Q$ и $R$ длины $4$ в $\t{K_n}$;}
         
%     $\bullet$ $\t{K_3}^2$.      
% \end{remark}

\begin{theorem}\label{t:2cyc-sym-bases}
    (a) Любой симметричный клеточный $2$-цикл в $K_{n,n}^{\underline2}$ является суммой нескольких из следующих слагаемых: 
     
    $\bullet$ симметризованные торы $Q\times R+R\times Q$ для вершинно-непересекающихся циклов $Q,R$ длины~$4$ в $K_{n,n}$;
    
    $\bullet$ комбинаторный взрезанный квадрат $K_{3,3}^{\underline2}$ подграфа $K_{3,3}$, графа $K_{n,n}$. 
    %именно так, а не изоморфного 

    (b) Любой клеточный $2$-цикл в $K^{\underline2}$ является суммой нескольких произведений вершинно-не\-пе\-ре\-секающихся циклов и нескольких комбинаторных взрезанных квадратов подграфов, гомеоморфных графу $K_5$ или $K_{3,3}$.
\end{theorem} 

\begin{proof}[Набросок доказательства п.~(а)] 
    Возьмем взаимно однозначное соответствие из утверждения \ref{l:h2sym}.b.
    По лемме \ref{t:2cyc-sym-lemma}.a, так как $(t \times t)$-симметризованные торы и $\t{K_3}^2$  порождают все $(t \times t)$-симметричные клеточные $2$-циклы в $\t{K_n}^2$, то их прообразы порождают все симметричные клеточные $2$-циклы в $K_{n,n}^{\underline2}$. Осталось проверить, что
    
    $\bullet$ прообразом $(t \times t)$-симметризованного тора для циклов длины 4 является симметризованный тор $Q\times R+R\times Q$ для (других) вершинно-непересекающиеся циклов $Q,R$ длины 4;  
    %(циклы $Q,R$ вершинно-непересекающиеся, так как $Q\times R+R\times Q\subset K_{n, n}^{\underline 2}$);
    
    $\bullet$ прообразом тора $\t{K_3}^2$ является $K_{3,3}^{\underline2}$ (ср. с замечанием \ref{remark:2cycl-bij}.ab).
\end{proof}

Теорема \ref{t:2cyc-sym-bases}.a есть частный случай результата из \cite[теорема 3.1]{SS23}.
Доказательство теоремы \ref{t:2cyc-sym-bases}.b можно найти в \cite{Sa91} и \cite{SS23}.

\begin{pr}\label{p:sawrong} *
    (a) Следующее утверждение~\cite[3.4.2]{Sa91} неверно, даже для связных графов:
    \emph{существует клеточный $2$-цикл $C$ в $K^{\underline2}$, такой что любой клеточный $2$-цикл в $K^{\underline2}$ является суммой нескольких из следующих клеточных $2$-циклов: $C$ и произведения вершинно-непересекающихся циклов.}
    
    (b) (нерешенная задача) Верно ли утверждение из п.~(a) для $3$-связных графов? 
    
    (с) (нерешенная задача) Верен ли аналог теоремы~\ref{t:2cyc-sym-bases}.a с заменой графа $K_{n,n}$ на граф $K_n$ и подграфа $K_{3,3}$ графа $K_{n,n}$ на подграф (гомеоморфный) $K_5$ графа $K_n$?
    
    (d) (загадка) Сколько имеется клеточных $2$-циклов в $K^{\underline2}$ для связного графа $K$ с $V$ вершинами и $E$ ребрами (используйте дополнительные данные о графе, если нужно)?
    
    {\it Подсказки.} (a) Возьмите дизъюнктное объединение двух копий графа $K_5$.
    Сделайте это объединение связным, соединив две копии ребром.
\end{pr}

%%%%%%%%%%%%%%%%%%%%%%%%


\begin{thebibliography}{MMM+}

%uncomment for arxiv
%\UseRawInputEncoding

\newcommand{\aate}{\bibitem[AA38]{AA38} \emph{A. Adrian Albert}. Symmetric and alternate matrices in an arbitrary field, I. Trans. Amer. Math. Soc., (1938) 43(3):386--436.}

\newcommand{\abc}{\bibitem[ABC+]{ABC+} * \emph{M. Atiyah, A. Borel, G. J. Chaitin, D. Friedan, J. Glimm, J. J. Gray, M. W. Hirsch, S. MacLane, B. B. Mandelbrot, D. Ruelle, A. Schwarz, K. Uhlenbeck, R. Thom, E. Witten, C.  Zeeman.} Responses to ``Theoretical Mathematics: Toward a cultural synthesis of mathematics and theoretical physics'', by A. Jaffe and F. Quinn. Bull. Am. Math. Soc. 30 (1994) 178--207. arXiv:math/9404229.}

\newcommand{\abgmns}{\bibitem[ABM+]{ABM+} * \emph{E. Alkin, E. Bordacheva, A. Miroshnikov, O. Nikitenko, A. Skopenkov,} Invariants of almost embeddings of graphs in the plane: results and problems, arXiv:2408.06392.}

\newcommand{\abms}{\bibitem[ABM+]{ABM+} * \emph{Э. Алкин, Е. Бордачева, А. Мирошников, А. Скопенков,} Инварианты почти вложений графов в плоскость, arXiv:2410.09860.}

\newcommand{\adnt}{\bibitem[Ad93]{Ad93} * \emph{M. Adachi}. Embeddings and Immersions. Amer. Math.
Soc., 1993. (Transl. of Math. Monographs; V.~124).}

\newcommand{\adoe}{\bibitem[Ad18]{Ad18} {\it K. Adiprasito,} Combinatorial Lefschetz theorems beyond positivity, arXiv:1812.10454v4.}

\newcommand{\adnsv}{\bibitem[ADN+]{ADN+} * \emph{E. Alkin, S. Dzhenzher, O. Nikitenko, A. Skopenkov, A. Voropaev.}
Cycles in graphs and in hypergraphs: resuts and problems, arXiv:2308.05175.}

\newcommand{\agles}{\bibitem[AGL]{AGL86} Mathematical Economics,  ed. by A. Ambrosetti, F. Gori, R. Lucchetti,
Lect. Notes Math. 1330, Springer, 1986.}

%Lectures given at the 2nd 1986 Session of the Centro ...

\newcommand{\akzz}{\bibitem[Ak00]{Ak00} * \emph{П. М. Ахметьев.} Вложения компактов, стабильные
гомотопические группы сфер и теория особенностей, Успехи Мат. Наук.  2000. 55:3. C.~3-62.}

\newcommand{\akoe}{\bibitem[AK19]{AK19} \emph{S. Avvakumov, R. Karasev.} Envy-free division using mapping degree,
Mathematika, 67:1 (2020), 36--53. arXiv:1907.11183.}

\newcommand{\akto}{\bibitem[AK21]{AK21} \emph{G. Arone and V. Krushkal.}
Embedding obstructions in $\R^d$ from the Goodwillie-Weiss calculus and Whitney disks. arXiv:2101.10995. }

\newcommand{\akm}{\bibitem[AKM]{AKM} \emph{M. Abrahamsen, L. Kleist and T. Miltzow.}
Geometric Embeddability of Complexes is $\exists\mathbb R$-complete, arXiv:2108.02585.}

\newcommand{\aksoe}{\bibitem[AKS]{AKS} \emph{S. Avvakumov, R. Karasev and A. Skopenkov.} Stronger counterexamples to the topological Tverberg conjecture, Combinatorica, 43 (2023), 717--727. arXiv:1908.08731.}

%https://link.springer.com/article/10.1007/s00493-023-00031-w

\newcommand{\akuoe}{\bibitem[AKu19]{AKu19} \emph{S. Avvakumov, S. Kudrya.}
Vanishing of all equivariant obstructions and the mapping degree.
Discr. Comp. Geom., 66:3 (2021) 1202--1216. arXiv:1910.12628.}

\newcommand{\alto}{\bibitem[Al22]{Al22} \emph{E. Alkin,}
Hardness of almost embedding simplicial complexes in $\R^d$, II. arXiv:2206.13486}

\newcommand{\amtf}{\bibitem[AM25]{AM25} \emph{E. Alkin, A. Miroshnikov,} On winding numbers of almost embeddings of $K_4$ in the plane, 	arXiv:2501.15642.}

\newcommand{\amsw}{\bibitem[AMS+]{AMSW} \emph{S. Avvakumov, I. Mabillard, A. Skopenkov and U. Wagner.}
Eliminating Higher-Multiplicity Intersections, III. Codimension 2, Israel J. Math. 245 (2021) 501--534.  arxiv:1511.03501.}

%\url{http://arxiv.org/abs/1511.03501}
%DOI: https://doi.org/10.1007/s11856-021-2216-z.

\newcommand{\anzt}{\bibitem[An03]{An03} * \emph{Д. В. Аносов.} Отображения окружности, векторные поля и их применения. М: МЦНМО, 2003.}

\newcommand{\arnf}{\bibitem[Ar95]{Ar95} * \emph{V. I. Arnold,}  Topological invariants of plane curves and caustics, University Lecture Series, Vol. 5, Amer. Math. Soc., Providence, RI, 1995.}

\newcommand{\arszo}{\bibitem[ARS01]{ARS01} \emph{P. Akhmetiev, D. Repov\v s and A. Skopenkov},
Embedding products of low-dimensional manifolds in $\R^m$, Topol. Appl. 113 (2001), 7--12.}

\newcommand{\arszt}{\bibitem[ARS02]{ARS02} \emph{P. Akhmetiev, D. Repovs and A. Skopenkov.} Obstructions to approximating maps of $n$-manifolds into $R^{2n}$ by embeddings, Topol. Appl., 123 (2002), 3--14.}

\newcommand{\asoed}{\bibitem[As]{As} \emph{A. Asanau,} \lowercase{A SIMPLE PROOF THAT CONNECTED SUM OF ORDERED
ORIENTED LINKS IS NOT WELL-DEFINED,} Math. Notes, to appear.}

\newcommand{\asoe}{\bibitem[As]{As} \emph{A. Asanau,} On the \lowercase{TRIPLE SELF-INTERSECTION NUMBER FOR GRAPHS IN THE PLANE,} unpublished, 2018.}

\newcommand{\avos}{\bibitem[Av14]{Av14} \emph{S. Avvakumov,} The classification of certain linked 3-manifolds in 6-space, Moscow Math. J., 16:1 (2016), 1--25. arXiv:1408.3918.}

\newcommand{\avose}{\bibitem[Av17]{Av17} \emph{S. Avvakumov,} The classification of linked 3-manifolds in 6-space, Algebraic \& Geometric Topology, 22:6 (2022) 2587--2630. arXiv:1704.06501.}



\newcommand{\bant}{\bibitem[Ba93]{Ba93} * \emph{T. Bartsch.} Topological methods for variational problems with
symmetries, Lecture Notes in Mathematics, 1560, Springer-Verlag, Berlin, 1993.}

\newcommand{\batt}{\bibitem[Ba23]{Ba23} * \emph{I. Barany.} Tverberg's theorem, a new proof. arXiv:2308.10105.}

\newcommand{\bbsn}{\bibitem[BB79]{BB} \emph{E.~G. Bajm{{\'o}}czy and I.~B{{\'a}}r{{\'a}}ny,}
\newblock On a common generalization of {B}orsuk's and {R}adon's theorem,
\newblock Acta Math.\ Acad.\ Sci.\ Hungar.\ 34:3 (1979), 347-350.}

\newcommand{\bbzos}{\bibitem[BBZ]{BBZ} * \emph{I.~B{{\'a}}r{{\'a}}ny, P.~V.~M. Blagojevi{{\'c}} and G.~M. Ziegler.} Tverberg's Theorem at 50: Extensions and Counterexamples, Notices of the Amer. Math. Soc., 63:7 (2016), 732--739.}

%\url{http://www.ams.org/journals/notices/201607}}

\newcommand{\bcm}{\bibitem[BCM]{BCM} * 13th Hilbert Problem on superpositions of functions, presented by A. Belov, A. Chilikov, I. Mitrofanov, S. Shaposhnikov and A. Skopenkov,
%\linebreak
\url{http://www.turgor.ru/lktg/2016/5/index.htm}.}

\newcommand{\beet}{\bibitem[BE82]{BE82} * \emph{V.G. Boltyansky and V.A. Efremovich.} Intuitive Combinatorial Topology. Springer.}

\newcommand{\beetr}{\bibitem[BE82]{BE82} * \emph{В. Г. Болтянский и В. А. Ефремович.} Наглядная топология. М.:  Наука, 1982.}

%\url{http://ilib.mccme.ru/djvu/geometry/boltiansky-nagl-topo.htm}.}

\newcommand{\bfzn}{\bibitem[BF09]{BF09} \emph{K. Barnett, M. Farber}. Topology of Configuration Space of Two Particles on a Graph, I.  Algebr. Geom. Topol. 9 (2009) 593--624.	arXiv:0903.2180.}

\newcommand{\bfzof}{\bibitem[BFZ14]{BFZ14} \emph{P. V. M. Blagojevi{\'c}, F. Frick, and G. M. Ziegler,}
Tverberg plus constraints, Bull. Lond. Math. Soc. 46:5 (2014), 953-967, arXiv:1401.0690.}

%\url{http://arxiv.org/abs/1401.0690}

\newcommand{\bfzos}{\bibitem[BFZ]{BFZ} \emph{P. V. M. Blagojevi{\'c}, F. Frick and G. M. Ziegler,}
Barycenters of Polytope Skeleta and Counterexamples to the Topological Tverberg Conjecture, via Constraints,
J. Eur. Math. Soc., 21:7 (2019) 2107-2116. arXiv:1510.07984.}

%MR3959859
%\linebreak\url{http://arxiv.org/abs/1510.07984}

\newcommand{\bgso}{\bibitem[BG71]{BG71} J.C. Becker and H. H. Glover, {\it Note on the Embedding of Manifolds in Euclidean Space,} Proc. of the Amer. Math. Soc., 27:2 (1971) 405-410.}

%doi:10.2307/2036329

\newcommand{\bgos}{\bibitem[BG16]{BG16} \emph{A. Bj\"orner and A. Goodarzi}, On Codimension one Embedding of Simplicial Complexes, in book: A Journey Through Discrete Mathematics, arXiv:1605.01240.}

\newcommand{\biet}{\bibitem[Bi83]{Bi83} * \emph{R. H. Bing.} The Geometric Topology of 3-Manifolds. Providence, R.~I. 1983. (Amer. Math. Soc. Colloq. Publ., 40).}

\newcommand{\bitz}{\bibitem[Bi20]{Bi20} * \emph{A. Bikeev.} Realizability of discs with ribbons on the M\"obius strip. Mat. Prosveschenie, 28 (2021), 150-158;
erratum to appear. arXiv:2010.15833.}

\newcommand{\bitzr}{\bibitem[Bi20]{Bi20} * \emph{А. Бикеев.} Реализуемость дисков с ленточками на ленте Мебиуса.
Мат. просвещение. Сер. 3. 28 (2021), 150--158.}

\newcommand{\bito}{\bibitem[Bi21]{Bi21} {\it A. I. Bikeev,}
Criteria for integer and modulo 2 embeddability of graphs to surfaces, arXiv:2012.12070v2.}

%Towards a short proof of the Fulek--Kyn\v cl criterion for modulo 2 embeddability of graphs to surfaces,

\newcommand{\bagos}{\bibitem[BG17]{BG17} \emph{S. Basu and S. Ghosh.} Equivariant maps related to the topological Tverberg conjecture, Homology, Homotopy and Applications 19:1 (2017) 155--170.}

\newcommand{\bkkmzof}{\bibitem[BKK]{BKK} \emph{M. Bestvina, M. Kapovich and B. Kleiner,}
Van Kampen's embedding obstruction for discrete groups, Invent. Math. 150 (2002) 219--235. arXiv:math/0010141.}

\newcommand{\bl}{\bibitem[BL]{BL} \url{https://en.wikipedia.org/wiki/Brunnian_link}}

\newcommand{\blf}{\bibitem[BL4]{BL4} Students form a 4-component Brunnian link,  \url{http://www.mccme.ru/circles/oim/foto2014/brunn4.png} (5Mb)}

\newcommand{\bmzf}{\bibitem[BM04]{BM04} \emph{Boyer, J. M. and Myrvold, W. J.} On the cutting edge: simplified $O(n)$ planarity by edge addition,  Journal of Graph Algorithms and Applications, 8:3 (2004) 241--273.}

\newcommand{\bm}{\bibitem[BM15]{BM15} \emph{I. Bogdanov and A. Matushkin.} Algebraic proofs of linear versions of the Conway--Gordon--Sachs theorem and the van Kampen--Flores theorem, arXiv:1508.03185.}

%\emph{И. Богданов и А. Матушкин,} Алгебраическое доказательство линейных теорем Конвея-Гордона-Закса
%и ван Кампена-Флореса.
%\linebreak \url{http://arxiv.org/abs/1508.03185}

\newcommand{\bmzzn}{\bibitem[BMZ09]{BMZ09} \emph{P. V. M. Blagojevi{\'c}, B. Matschke, G. M. Ziegler,}
Optimal bounds for a colorful Tverberg-Vre\'cica type problem, Advances in Math., 226 (2011), 5198-5215, arXiv:0911.2692.}

\newcommand{\bmzof}{\bibitem[BMZ15]{BMZ15} \emph{P. V. M. Blagojevi{\'c}, B. Matschke, G. M. Ziegler,}
Optimal bounds for the colored Tverberg problem, J. Eur. Math. Soc.,  17:4 (2015) 739--754,
arXiv:0910.4987.}

\newcommand{\bpns}{\bibitem[BP97]{BP97} * \emph{R. Benedetti and C. Petronio.} Branched standard spines of 3-manifolds, Lecture Notes in Math. 1653, Springer-Verlag, Berlin-Heidelberg-New York, 1997.}

\newcommand{\brst}{\bibitem[Br72]{Br72} \emph{J. L. Bryant.} Approximating embeddings of polyhedra in codimension 3, Trans. Amer. Math. Soc., 170 (1972) 85--95.}

\newcommand{\brts}{\bibitem[Br68]{Br68} \emph{P. Bruegel,} 1568,
%\linebreak
\url{https://en.wikipedia.org/wiki/The_Magpie_on_the_Gallows}.}

%The Magpie on the Gallows,

\newcommand{\bren}{\bibitem[Br82]{brown1982} * \emph{K.~S. Brown.} \newblock Cohomology of Groups. \newblock Springer-Verlag New York, 1982.}

%\bibitem[Bre]{Bre}G. Bredon \newblock {\em Cohomology Theory}

\newcommand{\bssos}{\bibitem[BS17]{BS17} * \emph{I.~B\'{a}r\'{a}ny and P. Sober\'{o}n,} Tverberg's theorem is 50 years old: a survey, Bull. Amer. Math. Soc. (N.S.) 55:4 (2018), 459--492. arXiv:1712.06119.}

\newcommand{\bsto}{\bibitem[BS21]{BS21} * \emph{A. Buchaev and A. Skopenkov,} Simple proofs of estimations of Ramsey numbers and of discrepancy, Mat. Prosveschenie, to appear, arXiv:2107.13831.}

\newcommand{\brsnn}{\bibitem[BRS99]{BRS99} \emph{D. Repov\v s, N. Brodsky and A. B. Skopenkov.}
A classification of 3-thickenings of 2-polyhedra, Topol. Appl. 1999. 94. P.~307-314.}

\newcommand{\bsseo}{\bibitem[BSS]{BSS} \emph{I.~B\'{a}r\'{a}ny, S.~B. Shlosman, and A.~Sz{\H{u}}cs,}
\newblock On a topological generalization of a theorem of {T}verberg,
\newblock J.\ London Math.\ Soc.\ (II. Ser.) 23 (1981), 158--164.}

\newcommand{\btzs}{\bibitem[BT07]{BT07} \emph{A. Bj\"orner, M. Tancer}, Combinatorial Alexander Duality --- a Short and Elementary Proof, Discr. and Comp. Geom., 42 (2009) 586. arXiv:0710.1172.}

\newcommand{\buse}{\bibitem[Bu68]{Bu68} \emph{A. R. Butz,} Space filling curves and mathematical programming, Information and Control, 12:4 (1968) 314--330.}

%\url{http://www.sciencedirect.com/science/article/pii/S0019995868903677}

\newcommand{\bz}{\bibitem[BZ16]{BZ16} * \emph{P. V. M. Blagojevi\'c and G. M. Ziegler,} Beyond the Borsuk-Ulam theorem: The topological Tverberg story, in: A Journey Through Discrete Mathematics, Eds. M. Loebl,
J. Ne\v set\v ril, R. Thomas, Springer, 2017, 273--341. arXiv:1605.07321v3.}

%\url{http://arxiv.org/abs/1605.07321}


\newcommand{\cano}{\bibitem[Ca91]{Ca91} * \emph{D. de Caen}, The ranks of tournament matrices, Amer. Math. Monthly, 98:9 (1991) 829--831.}

\newcommand{\ca}{\bibitem[Ca]{Ca} \emph{J. Carmesin.} Embedding simply connected 2-complexes in 3-space, I-V, arXiv:1709.04642, arXiv:1709.04643, arXiv:1709.04645, arXiv:1709.04652, arXiv:1709.04659.}

\newcommand{\cfsz}{\bibitem[CF60]{CF60} \emph{P. E. Conner and E. E. Floyd}, Fixed points free involutions and equivariant maps, Bull. Amer. Math. Soc., 66 (1960) 416--441.}

\newcommand{\cfs}{\bibitem[CFS]{CFS} \emph{D. Crowley, S.C. Ferry, M. Skopenkov,} The rational classification of links of codimension $>2$, Forum Math. 26 (2014), 239--269. arXiv:1106.1455.}

\newcommand{\cget}{\bibitem[CG83]{CG83} \emph{J. H. Conway and C. M. A. Gordon},
Knots and links in spatial graphs, J. Graph Theory  7 (1983), 445--453.}

\newcommand{\cten}{\bibitem[Ch]{Ch} \emph{Chuang Tzu,} translated by H. A. Giles, Bernard Quaritch, London, 1889.}

\newcommand{\ctruku}{\bibitem[Ch]{Ch} \emph{Chuang Tzu,} translated to Russian by S. Kuchera, in: Ancient Chinese Philosophy, v. I, Mysl, Moscow, 1972.}

%Древнекитайская философия. Собрание текстов в двух томах. Т. I. М., <<Мысль>>, 1972.

\newcommand{\chnn}{\bibitem[Ch99]{Ch99} * \emph{А. В. Чернавский,} Теорема Жордана.  Мат. Просвещение, 3 (1999), 142--157.}

\newcommand{\hcon}{\bibitem[HC19]{HC19} * \emph{C. Herbert Clemens.} Two-Dimensional Geometries. A Problem-Solving Approach, Amer. Math. Soc., 2019.}

\newcommand{\ckmoo}{\bibitem[CKMS]{CKMS} \emph{M. \v Cadek, M. Kr\v c\'al. J. Matou\v sek, F. Sergeraert,
L. Vok\v r\'inek, U. Wagner.} Computing all maps into a sphere, J. of the ACM, 61:3 (2014). arXiv:1105.6257.}

%http://arxiv.org/abs/1105.6257.

\newcommand{\ckmvwot}{\bibitem[CKM12+]{CKM12+} \emph{M. \v Cadek, M. Kr\v c\'al. J. Matou\v sek, L. Vok\v r\'inek, U. Wagner.} Polynomial-time computation of homotopy groups and Postnikov systems in fixed dimension, SIAM J. Comput., 43:5 (2014), 1728--1780. arXiv:1211.3093.}

\newcommand{\ckmvw}{\bibitem[CKM+]{CKM+} \emph{M. \v Cadek, M. Kr\v c\'al. J. Matou\v sek, L. Vok\v r\'inek, U. Wagner.} Extendability of continuous maps is undecidable, Discr. and Comp. Geom. 51 (2014) 24--66.
arXiv:1302.2370.}

\newcommand{\ckppt}{\bibitem[CKP+]{CKP+} \emph{E. Colin de Verdi\'ere, V. Kalu\v za, P. Pat\'ak, Z. Pat\'akov\'a and M. Tancer.} A direct proof of the strong Hanani-Tutte theorem on the projective plane. Journal of Graph Algorithms and Applications, 21:5 (2017) 939--981.}

\newcommand{\cksof}{\bibitem[CKS+]{CKS+} * New ways of weaving baskets, presented by G. Chelnokov, Yu. Kudryashov, A.Skopenkov and A. Sossinsky, \url{http://www.turgor.ru/lktg/2004/lines.en/index.htm}.}

\newcommand{\ckv}{\bibitem[CKV]{CKV} \emph{M.~{\v{C}}adek, M.~Kr\v{c}\'{a}l, and L.~Vok\v{r}\'{\i}nek.}
Algorithmic solvability of the lifting-extension problem, Discr. Comp. Geom. 57 (2017), 915--965. arXiv:1307.6444.}

%DOI 10.1007/s00454-016-9855-6

\newcommand{\clr}{\bibitem[CLR]{CLR} * \emph{Т. Кормен, Ч. Лейзерсон, Р. Ривест.} Алгоритмы:
построение и анализ, МЦНМО, Москва, 1999.}

\newcommand{\clreng}{\bibitem[CLR]{CLR} * \emph{T. H. Cormen, C. E.Leiserson, R. L.Rivest, C. Stein.} Introduction to Algorithms, MIT Press, 2009.}

\newcommand{\crzfru}{\bibitem[CR]{CR} * \emph{Р. Курант, Дж. Роббинс,} Что такое математика. М.: МЦНМО, 2004.}

\newcommand{\crzfen}{\bibitem[CR]{CR} * \emph{R. Courant and H. Robbins,} What is Mathematics, Oxford Univ. Press.}

\newcommand{\crsne}{\bibitem[CRS98]{CRS98} * \emph{A. Cavicchioli, D. Repov\v s and A. B. Skopenkov.}
Open problems on graphs, arising from geometric topology, Topol. Appl. 1998. 84. P.~207-226.}

\newcommand{\crsot}{\bibitem[CRS]{CRS} \emph{M. Cencelj, D. Repov\v s and M. Skopenkov,}
Classification of knotted tori in the 2-metastable dimension, Mat. Sbornik, 203:11 (2012), 1654--1681.
arxiv:math/0811.2745.}

\newcommand{\csoo}{\bibitem[CS08]{CS08} \emph{D. Crowley and A. Skopenkov.} A classification of smooth embeddings of 4-manifolds in 7-space, II, Intern. J. Math., 22:6 (2011) 731-757, arxiv:math/0808.1795.}

\newcommand{\csos}{\bibitem[CS16]{CS16} \emph{D. Crowley and A. Skopenkov,} Embeddings of non-simply-connected 4-manifolds in 7-space. I. Classification modulo knots, Moscow Math. J., 21 (2021), 43--98. arXiv:1611.04738.}

%DOI:10.17323/1609-4514-2021-21-1-43-98

\newcommand{\csoso}{\bibitem[CS16o]{CS16o} \emph{D. Crowley and A. Skopenkov,} Embeddings of non-simply-connected 4-manifolds in 7-space. II. On the smooth classification, Proc. A of the Royal Soc. of Edinburgh 152:1 (2022), 163--181. arXiv:1612.04776.}

%https://doi.org/10.1017/prm.2020.103

\newcommand{\crsk}{\bibitem[CS]{CS} \emph{D. Crowley and A. Skopenkov,} Embeddings of non-simply-connected 4-manifolds in 7-space. III. Piecewise-linear classification. draft.}

\newcommand{\cutz}{\bibitem[Cu20]{Cu20} \emph{C. Culter,} Cantor sets are not tangent homogeneous,
Topol. Appl. 271 (2020) 1--9.}


\newcommand{\dies}{\bibitem[Di87]{Di} * \emph{T. tom Dieck,} Transformation groups, Studies in Mathematics, vol. 8, Walter de Gruyter, Berlin, 1987.}

\newcommand{\dize}{\bibitem[Di08]{Di08} * \emph{T. tom Dieck,} Algebraic topology, EMS Textbooks in Mathematics, %European Mathematical Society
EMS, Z\"urich, 2008.}

\newcommand{\dent}{\bibitem[De93]{De93}  \emph{T.K. Dey.} On counting triangulations in $d$-dimensions. Comput. Geom.  3:6 (1993) 315--325.}

\newcommand{\denf}{\bibitem[DE94]{DE94}  \emph{T.K. Dey and H. Edelsbrunner.} Counting triangle crossings and halving planes, Discrete Comput. Geom. 12 (1994), 281--289.}

\newcommand{\dgn}{\bibitem[DGN+]{DGN+} * S. Dzhenzher, T. Garaev, O. Nikitenko, A. Petukhov, A. Skopenkov, A. Voropaev, Low rank matrix completion and realization of graphs: results and problems, arXiv:2501.13935.}

\newcommand{\dgnr}{\bibitem[DGN+]{DGN+} * Минимизация ранга восполнением матриц, представляли А. Воропаев, Т. Гараев, С. Дженжер, О. Никитенко, А. Петухов и А. Скопенков, \url{https://www.mccme.ru/circles/oim/netflix_rus.pdf}.}
 
\newcommand{\dstt}{\bibitem[DS22]{DS22}  \emph{S. Dzhenzher and A. Skopenkov,} A quadratic estimation for the K\"uhnel conjecture on embeddings, arXiv:2208.04188.}

\newcommand{\botf}{\bibitem[Dz25]{Dz25} \emph{E. Dzhenzher,} Symmetric 1-cycles in the deleted product of a graph, Topol. Appl. (2025) 109277.}


\newcommand{\embo}{\bibitem[Eb]{Eb} * \url{http://www.map.mpim-bonn.mpg.de/Embeddings_of_manifolds_with_boundary:_classification}}

\newcommand{\embe}{\bibitem[Em]{Em} * \url{http://www.map.mpim-bonn.mpg.de/Embedding_(simple_definition)}}

\newcommand{\ers}{\bibitem[ERS]{ERS} * Invariants of graph drawings in the plane, presented by A. Enne, A. Ryabichev, A. Skopenkov and T. Zaitsev, \url{http://www.turgor.ru/lktg/2017/6/index.htm}}



\newcommand{\feto}{\bibitem[Fe21]{Fe21} \emph{M. Fedorov.} A description of values of Seifert form for punctured $n$-manifolds in $(2n-1)$-space, arXiv:2107.02541.}

\newcommand{\ffen}{\bibitem[FF89]{FF89} * \emph{А. Т. Фоменко и Д. Б. Фукс.} Курс гомотопической топологии. М.: Наука, 1989.}

\newcommand{\ffene}{\bibitem[FF89]{FF89} * \emph{A.T. Fomenko and D.B. Fuchs.} Homotopical Topology, Springer, 2016.}

%A course in homotopy theory (in Russian). Moscow: Nauka, 1989.
%English translation of a preliminary edition: Budapest, AK, 1986.}

\newcommand{\fhzo}{\bibitem[FH10]{FH10}  \emph{M. Farber, E. Hanbury}. Topology of Configuration Space of Two Particles on a Graph, II. Algebr. Geom. Topol. 10 (2010) 2203--2227. arXiv:1005.2300.}

%arXiv:0903.2180

\newcommand{\fkosc}{\bibitem[FK17]{FK17} \emph{R. Fulek, J. Kyn{\v{c}}l,} Counterexample to an Extension of the Hanani-Tutte Theorem on the Surface of Genus 4, Combinatorica, 39 (2019) 1267--1279, arXiv:1709.00508.}

\newcommand{\fkos}{\bibitem[FK17]{FK17} \emph{R. Fulek, J. Kyn{\v{c}}l,} Hanani-Tutte for approximating maps of graphs, arXiv:1705.05243.}

\newcommand{\fkon}{\bibitem[FK19]{FK19} \emph{R. Fulek, J. Kyn{\v{c}}l,}
$\Z_2$-genus of graphs and minimum rank of partial symmetric matrices,
35th Intern. Symp. on Comp. Geom. (SoCG 2019), Article No. 39; pp. 39:1--39:16, \linebreak
\url{https://drops.dagstuhl.de/opus/volltexte/2019/10443/pdf/LIPIcs-SoCG-2019-39.pdf}.
We refer to numbering in arXiv version: arXiv:1903.08637.}

\newcommand{\fktnf}{\bibitem[FKT]{FKT} \emph{M. H. Freedman, V. S. Krushkal and P. Teichner.} Van Kampen's
embedding obstruction is incomplete for 2-complexes in~$\R^4$, Math. Res. Letters. 1994. 1. P.~167-176.}

\newcommand{\fltf}{\bibitem[Fl34]{Fl34} \emph{A. Flores}, \"Uber $n$-dimensionale Komplexe die im $E^{2n+1}$ absolut selbstverschlungen sind, Ergeb. Math. Koll. 6 (1934) 4--7.}

\newcommand{\fo}{\bibitem[Fo]{Fo} * \emph{L. Fortnow.} Time for Computer Science to Grow Up,  \url{https://people.cs.uchicago.edu/~fortnow/papers/growup.pdf}.}

\newcommand{\fozf}{\bibitem[Fo04]{Fo04} * \emph{R. Fokkink.} A forgotten mathematician, Eur. Math. Soc. Newsletter 52 (2004) 9--14.}

%\url{http://www.ems-ph.org/journals/newsletter/pdf/2004-06-52.pdf}.}

\newcommand{\fpstz}{\bibitem[FPS]{FPS} \emph{R. Fulek, M.J. Pelsmajer and M. Schaefer.}
Strong Hanani-Tutte for the Torus, arXiv:2009.01683.}

\newcommand{\frse}{\bibitem[Fr78]{Fr78} \emph{M. Freedman,} Quadruple points of 3-manifolds in $S^4$, Comment. Math. Helv. 53 (1978), 385-394.}

\newcommand{\fres}{\bibitem[FR86]{FR86} \emph{R. Fenn, D. Rolfsen.}
Spheres may link homotopically in 4-space, J. London Math. Soc. 34 (1986) 177-184.}

\newcommand{\frofea}{\bibitem[Fr15']{Fr15'} \emph{F. Frick}, Counterexamples to the topological Tverberg conjecture, arXiv:1502.00947v1.}

%http://arxiv.org/abs/1502.00947

\newcommand{\frof}{\bibitem[Fr15]{Fr15} \emph{F. Frick}, Counterexamples to the topological Tverberg conjecture,
Oberwolfach reports, 12:1 (2015), 318--321. arXiv:1502.00947.}

\newcommand{\fros}{\bibitem[Fr17]{Fr17} \emph{F. Frick}, O\lowercase{N AFFINE TVERBERG-TYPE RESULTS WITHOUT CONTINUOUS GENERALIZATION}, arXiv:1702.05466}

%https://arxiv.org/pdf/1702.05466.pdf,

\newcommand{\fstz}{\bibitem[FS20]{FS20} \emph{F. Frick and P. Sober\'on}, The topological Tverberg problem beyond prime powers, arXiv:2005.05251.}

\newcommand{\ftss}{\bibitem[FT77]{FT77} \emph{R. Fenn, P. Taylor,} Introducing doodles, pp. 37-43
in: Topology of Low-Dimensional Manifolds, Proceedings of the Second Sussex Conference, 1977,
Ed. R. Fenn, V. 722 of Lecture Notes in Math.}

\newcommand{\fvto}{\bibitem[FV21]{FV21} \emph{M. Filakovsk\'y, L. Vok\v r\'inek.} Computing homotopy classes for diagrams, Discr. Comp. Geom. 70 (2023), 866--920. arXiv:2104.10152.}

\newcommand{\fwz}{\bibitem[FWZ]{FWZ} \emph{M. Filakovsk\'y, U. Wagner, S. Zhechev.} Embeddability of simplicial complexes is undecidable. Oberwolfach reports, to appear.}

\newcommand{\fwztz}{\bibitem[FWZ]{FWZ} \emph{M. Filakovsk\'y, U. Wagner, S. Zhechev.} Embeddability of simplicial complexes is undecidable. Proceedings of the 2020 ACM-SIAM Symposium on Discrete Algorithms.}

%\url{https://epubs.siam.org/doi/pdf/10.1137/1.9781611975994.47}


\newcommand{\ga}{\bibitem[GA]{GA} * \url{https://en.wikipedia.org/wiki/Galactic_algorithm}}

\newcommand{\gatt}{\bibitem[Ga23]{Ga23} \emph{T. Garaev}, On drawing $K_5$ minus an edge in the plane, arXiv:2303.14503.}

\newcommand{\gdikrse}{\bibitem[GDI]{GDI} * {\it A. Chernov, A. Daynyak, A. Glibichuk, M. Ilyinskiy, A. Kupavskiy, A. Raigorodskiy and A. Skopenkov,} Elements of Discrete Mathematics As a Sequence of Problems (in Russian),
MCCME, Moscow, 2016. Update of a part: \url{http://www.mccme.ru/circles/oim/discrbook.pdf}}

\newcommand{\gdikrs}{\bibitem[GDI]{GDI} * {\it А.А. Глибичук, А.Б. Дайняк, Д.Г. Ильинский, А.Б. Купавский, А.М. Райгородский, А.Б. Скопенков, А.А. Чернов,} Элементы дискретной математики в задачах, М, МЦНМО, 2016. Обновляемая версия части книги: 
\url{http://www.mccme.ru/circles/oim/discrbook.pdf}}

\newcommand{\giso}{\bibitem[Gi71]{Gi71} * {\it S. Gitler,} Immersion and Embedding of Manifolds, Proc. Symp. Pure Math. 22, 87-96 (1971).}

\newcommand{\gkp}{\bibitem[GKP]{GKP} * {\it R. Graham, D. Knuth, and O. Patashnik,} Concrete Mathematics: A Foundation for Computer Science, Addison–Wesley, first published in 1989, \url{https://www.csie.ntu.edu.tw/~r97002/temp/Concrete\%20Mathematics\%202e.pdf}.}

\newcommand{\gmpptw}{\bibitem[GMP+]{GMP+} \emph{X. Goaoc, I. Mabillard, P. Pat\'ak, Z. Pat\'akov\'a, M. Tancer, U. Wagner}, On Generalized Heawood Inequalities for Manifolds: a van Kampen--Flores-type Nonembeddability Result,
Israel J. Math., 222(2) (2017) 841-866. arXiv:1610.09063.}

%https://core.ac.uk/download/pdf/62919875.pdf

\newcommand{\gppot}{\bibitem[GPP+]{GPP+} \emph{X. Goaoc, P. Pat\'ak, Z. Pat\'akov\'a, M. Tancer, and U. Wagner.} Bounding Helly numbers via Betti numbers. In 31st International Symposium on Computational Geometry, volume 34
of LIPIcs. Leibniz Int. Proc. Inform., pp. 507-521. Schloss Dagstuhl. Leibniz-Zent. Inform., Wadern, 2015. Full version: arXiv:1310.4613.}

\newcommand{\group}{\bibitem[Gr]{Gr} * \url{https://en.wikipedia.org/wiki/Groupthink}}

\newcommand{\grsz}{\bibitem[Gr69]{Gr69} \emph{B. Gr\"unbaum.} Imbeddings of simplicial complexes. Comment. Math. Helv., 44:1, 502--513, 1969.}

%https://link.springer.com/article/10.1007/BF02564551

\newcommand{\gres}{\bibitem[Gr86]{Gr86} * \emph{M. Gromov}, Partial Differential Relations,
Ergebnisse der Mathematik und ihrer Grenzgebiete (3), Springer Verlag, Berlin-New York, 1986.}

\newcommand{\groz}{\bibitem[Gr10]{Gr10} \emph{M. Gromov,}
\newblock Singularities, expanders and topology of maps. Part 2: From combinatorics to topology via algebraic isoperimetry, \newblock Geometric and Functional Analysis 20 (2010), no.~2, 416--526.}

\newcommand{\grsn}{\bibitem[GR79]{GR79} \emph{J. L. Gross	and R. H. Rosen}, A linear time planarity algorithm for 2-complexes, Journal of the ACM, 26:4 (1979), 611--617.}

\newcommand{\gs}{\bibitem[GS]{GS} \emph{М. Гортинский и О. Скрябин.} Критерий вложимости графов в плоскость вдоль прямой, препринт.}

\newcommand{\gssn}{\bibitem[GS79]{GS} \emph{P.~M. Gruber and R.~Schneider,} Problems in geometric convexity. In {\em Contributions to geometry (Proc. Geom. Sympos., Siegen, 1978)}, 255--278. Birkh{\"a}user, Basel-Boston, Mass., 1979.}

\newcommand{\gsnn}{\bibitem[GS99]{GS99} \emph{R. Gompf and A. Stipsicz,}
4-manifolds and Kirby calculus, GSM20, AMS, Providence, RI, 1999.}

%Graduate Studies in Mathematics 20, American Mathematical Society, MR1707327 (2000h:57038)

\newcommand{\gszs}{\bibitem[GS06]{GS06} \emph{D. Goncalves and A. Skopenkov,} Embeddings of homology equivalent manifolds with boundary, Topol. Appl., 153:12 (2006) 2026-2034. arxiv:1207.1326.}

\newcommand{\gssoe}{\bibitem[GSS+]{GSS+} * Projections of skew lines, presented by A. Gaifullin, A. Shapovalov, A. Skopenkov and M. Skopenkov, \url{http://www.turgor.ru/lktg/2001/index.php}.}

\newcommand{\gtes}{\bibitem[GT87]{GT87} * \emph{J. L. Gross and T. W. Tucker.}
Topological graph theory. New York: Wiley-Interscience, 1987.}

\newcommand{\guzn}{\bibitem[Gu09]{Gu09} \emph{A. Gundert.} On the complexity of embeddable simplicial complexes. Diplomarbeit, Freie Universit\"at Berlin, 2009. 	arXiv:1812.08447.}


\newcommand{\ha}{\bibitem[Ha]{Ha} * \emph{F. Harary.} Graph theory.
Рус. пер.: Ф. Харари. Теория графов. М., Мир, 1973.}

\newcommand{\hats}{\bibitem[Ha37]{Ha37} \emph{W. Hantzsche,} Einlagerung von Mannigfaltigkeiten in euklidische R\" aume, Math. Zeitschrift, 43:1 (1937) 38--58.}

\newcommand{\hastk}{\bibitem[Ha62k]{Ha62k} {\em A.~Haefliger,}  Knotted $(4k-1)$-spheres in $6k$-space, Ann. of Math. 75 (1962) 452--466.}

\newcommand{\hastl}{\bibitem[Ha62l]{Ha62l} \emph{A. Haefliger,} Differentiable links, Topology, 1 (1962) 241--244.}

\newcommand{\hast}{\bibitem[Ha63]{Ha63} \emph{A.~Haefliger,} Plongements differentiables dans le domain stable, Comment. Math. Helv. 36 (1962-63) 155--176.}

\newcommand{\hassa}{\bibitem[Ha66A]{Ha66A} \textit{A. Haefliger}. Differential embeddings of~$S^n$ in $S^{n+q}$ for $q>2$. Ann. Math. (2), 83 (1966), 402--~436.}

\newcommand{\hass}{\bibitem[Ha66C]{Ha66C} \emph{A.~Haefliger,}  Enlacements de spheres en codimension superiure \`a 2, Comment. Math. Helv. 41 (1966-67) 51--72.}

\newcommand{\hase}{\bibitem[Ha68]{Ha68} \emph{A. Haefliger,} Knotted Spheres and Related Geometric Topic,
in Proc. Int. Congr. Math., Moscow, 1966 (Mir, Moscow, 1968), 437--445.}

\newcommand{\hasn}{\bibitem[Ha69]{Ha69} \emph{L.~S.~Harris,} Intersections and embeddings of polyhedra, Topology 8 (1969) 1--26.}

\newcommand{\hasf}{\bibitem[Ha74]{Ha74} * \emph{P. Halmos,} How to talk mathematics. Notices of the Amer. Math. Soc., 21 (1974) 155--158.}

\newcommand{\haef}{\bibitem[Ha84]{Ha84} \emph{N. Habegger,} Obstruction to embedding disks II: a proof of a conjecture by Hudson, Topol. Appl. 17 (1984).}

\newcommand{\haes}{\bibitem[Ha86]{Ha86} \emph{N. Habegger,} Knots and links in codimension greater than 2, Topology, 25:3 (1986) 253--260.}

\newcommand{\hogr}{\bibitem[HG]{HG} * \url{http://www.map.mpim-bonn.mpg.de/Homology_groups_(simplicial;_simple_definition)}}

\newcommand{\hifn}{\bibitem[Hi59]{Hi59} \emph{M. W. Hirsch.} Immersions of manifolds, Trans. Amer. Math. Soc. 93 (1959) 242--276.}

\newcommand{\hjsf}{\bibitem[HJ64]{HJ64} \emph{R. Halin and H. A. Jung.}
Karakterisierung der Komplexe der Ebene und der 2-Sph\"are, Arch. Math. 1964. 15. P.~466-469.}

\newcommand{\hkne}{\bibitem[HK98]{HK98} \emph{N. Habegger and U. Kaiser,} Link homotopy in 2--metastable range, Topology 37:1 (1998) 75--94.}

\newcommand{\hmsnt}{\bibitem[HMS]{HMS93} * \emph{C. Hog-Angeloni, W. Metzler and A. J. Sieradski.}
Two-dimensional homotopy and combinatorial group theory. Cambridge: Cambridge Univ. Press, 1993. (London Math. Soc. Lecture Notes, 197).}

\newcommand{\ho}{\bibitem[Ho]{Ho} * The Hopf fibration, \url{https://www.youtube.com/watch?v=AKotMPGFJYk}}

\newcommand{\hozs}{\bibitem[Ho06]{Ho06} \emph{H. van der Holst,} Graphs and obstructions in four dimensions, J. Combin. Theory Ser. B 96:3 (2006), 388--404.}

%https://www.sciencedirect.com/science/article/pii/S0095895605001425?via%3Dihub

\newcommand{\hpzn}{\bibitem[HP09]{HP09} \emph{H. van der Holst and R. Pendavingh,} On a graph property generalizing planarity and flatness, Combinatorica, 29 (2009) 337--361.}

\newcommand{\hssf}{\bibitem[HS64]{HS64} \emph{A. Haefliger and B. Steer,} Symmetry of linking coefficients, Comment. Math. Helv. 39 (1964) 259-270.}
%https://www.maths.ed.ac.uk/~v1ranick/papers/haefstee.pdf
%1.5 composition via Pontryagin construction

\newcommand{\htsf}{\bibitem[HT74]{HT74} \emph{J. Hopcroft and R. E. Tarjan,} Efficient planarity testing, J. of the Association for Computing Machinery, 21:4 (1974) 549--568.}

\newcommand{\hufn}{\bibitem[Hu59]{hu59} * \emph{S. T. Hu,} Homotopy Theory, Academic Press, New York, 1959.}

\newcommand{\husn}{\bibitem[Hu69]{Hu69} * \emph{J. F. P. Hudson.} Piecewise linear topology, W. A. Benjamin, Inc., New York-Amsterdam, 1969.}


\newcommand{\io}{\bibitem[Io]{Io} * \url{https://en.wikipedia.org/wiki/Category:Impossible_objects}}

\newcommand{\info}{\bibitem[IF]{IF} * \url{http://www.map.mpim-bonn.mpg.de/Intersection_form}}

\newcommand{\irsf}{\bibitem[Ir65]{Ir65} \emph{M.~C.~Irwin,} Embeddings of polyhedral manifolds, Ann. of Math. (2)
82 (1965) 1--14.}

\newcommand{\isot}{\bibitem[Is]{Is} * \url{http://www.map.mpim-bonn.mpg.de/Isotopy}}


\newcommand{\jqnt}{\bibitem[JQ93]{JQ93} * \emph{A. Jaffe, F. Quinn,} ``Theoretical mathematics'': Toward a cultural synthesis of mathematics and theoretical physics. Bull.Am.Math.Soc. 29 (1993) 1-13. arXiv:math/9307227.}

\newcommand{\jozt}{\bibitem[Jo02]{Jo02} \emph{C. M. Johnson.} An obstruction to embedding a simplicial $n$-complex into a $2n$-manifold, Topology Appl. 122:3 (2002) 581--591.}

\newcommand{\jvz}{\bibitem[JVZ]{JVZ} D. Joji\'c, S. T. Vre\'cica, R. T. \v Zivaljevi\' c,
Topology and combinatorics of 'unavoidable complexes', arXiv:1603.08472v1.}
%\url{http://arxiv.org/abs/1603.08472}

%\bibitem[JW54]{JW54} I. M. James and J. H. C. Whitehead, The homotopy theory of sphere bundles over spheres,
%Proc. London Math. Soc. (3)  I: 4 (1954) 196--218, II: 5 (1955) 148--166.


\newcommand{\kalai}{\bibitem[Ka]{Ka} G. Kalai, From Oberwolfach: The Topological Tverberg Conjecture is False, `Combinatorics and more' blog post, February 6, 2015, \url{gilkalai.wordpress.com}}

\newcommand{\kh}{\bibitem[Kh]{Kh} \emph{А.И. Храбров.} Руководство по чтению лекций
\url{http://vm.tstu.tver.ru/topics/pdf_tests/lection.pdf}}

\newcommand{\kho}{\bibitem[Kho]{Kho} \emph{N. Khoroshavkina.} A simple characterization of graphs of cutwidth 2, arXiv:1811.06716.}

\newcommand{\kkrot}{\bibitem[KKR]{KKR} \emph{K. Kawarabayashi, Y. Kobayashi and B. Reed.} The disjoint paths problem in quadratic time, J. of Comb. Theory, Ser. B, 102:2 (2012), 424--435.}

\newcommand{\kmsth}{\bibitem[KM63]{KM63} \emph{M. A. Kervaire and J. W. Milnor,} Groups of homotopy spheres. I,  Ann. of Math. (2) 77 (1963), 504-537.}

\newcommand{\kozeru}{\bibitem[Ko18]{Ko18} * \emph{Е. Колпаков.}
Доказательство теоремы Радона при помощи понижения размерности, Мат. Просвещение, 23 (2018), arXiv:1903.11055.}

\newcommand{\koze}{\bibitem[Ko18]{Ko18} * \emph{E. Kolpakov.}
A proof of Radon Theorem via lowering of dimension, Mat. Prosveschenie, 23 (2018), arXiv:1903.11055.}

\newcommand{\ko}{\bibitem[Ko]{Ko} \emph{E. Kolpakov.} A `converse' to the Constraint Lemma, arXiv:1903.08910.}

\newcommand{\koon}{\bibitem[Ko19]{Ko19} \emph{E. Kogan.} Linking of three triangles in 3-space, arXiv:1908.03865.}

\newcommand{\koto}{\bibitem[Ko21]{Ko21} \emph{E. Kogan.} On the rank of $\Z_2$-matrices with free entries on the diagonal, arXiv:2104.10668.}

\newcommand{\koee}{\bibitem[Ko88]{Ko88} \emph{U. Koschorke.} Link maps and the geometry of their invariants,
Manuscripta Math. 61:4 (1988) 383--415.}

\newcommand{\kono}{\bibitem[Ko91]{Ko91} \emph{U. Koschorke.} Link homotopy with many components,
Topology 30:2 (1991) 267--281.}

\newcommand{\kons}{\bibitem[Ko97]{Ko97} \emph{U. Koschorke.} A generalization of Milnor's $\mu$-invariants to higher-dimensional link maps, Topology 36:2 (1997) 301--324.}


\newcommand{\kps}{\bibitem[KPS]{KPS} * \emph{A. Kaibkhanov, D. Permyakov and A. Skopenkov.}
Realization of graphs with rotation, \url{http://www.turgor.ru/lktg/2005/3/index.htm}.}

\newcommand{\krzz}{\bibitem[Kr00]{Kr00} \emph{V. S. Krushkal.} Embedding obstructions and 4-dimensional thickenings of 2-complexes, Proc. Amer. Math. Soc. 128:12 (2000) 3683--3691. arXiv:math/0004058. }

\newcommand{\ksnn}{\bibitem[KS99]{KS99} * \emph{П. Кожевников и А. Скопенков.} Узкие деревья на плоскости, Мат. Образование. 1999. 2-3. С.~126-131.}

\newcommand{\kstz}{\bibitem[KS20]{KS20} \emph{R. Karasev and A. Skopenkov.}
Some `converses' to intrinsic linking theorems, Discr. Comp. Geom., 70:3 (2023), 921--930, arXiv:2008.02523.}

%https://doi.org/10.1007/s00454-023-00505-0

\newcommand{\ksto}{\bibitem[KS21]{KS21} * \emph{E. Kogan and A. Skopenkov.} A short exposition of the Patak-Tancer theorem on non-embeddability of $k$-complexes in $2k$-manifolds,  arXiv:2106.14010.}

\newcommand{\kstoe}{\bibitem[KS21e]{KS21e} \emph{E. Kogan and A. Skopenkov.}
Embeddings of $k$-complexes in $2k$-manifolds and minimum rank of partial symmetric matrices, arXiv:2112.06636v2.}

\newcommand{\kutt}{\bibitem[Ku23]{Ku23} \emph{W. K\"uhnel.} Generalized Heawood Numbers, The Electronic Journal of Combinatorics, 30:4 (2023) \#P4.17.}

%https://www.combinatorics.org/ojs/index.php/eljc/article/view/v30i4p17


\newcommand{\kuse}{\bibitem[Ku68]{Ku68} * \emph{К. Куратовский.} Топология. Т.~1,~2. М.: Мир, 1969.}

\newcommand{\kunfo}{\bibitem[Ku94]{Ku94} \emph{W. K\"uhnel.} Manifolds in the skeletons of convex polytopes, tightness, and generalized Heawood inequalities. In Polytopes: abstract, convex and computational (Scarborough, ON, 1993), volume 440 of NATO Adv. Sci. Inst. Ser. C Math. Phys. Sci., pp. 241--247. Kluwer
Acad. Publ., Dordrecht, 1994.}

%https://www.math.ucdavis.edu/~deloera/MISC/LA-BIBLIO/trunk/Kuhnel.pdf
%https://link.springer.com/chapter/10.1007/978-94-011-0924-6_11

\newcommand{\kunf}{\bibitem[Ku95]{Ku95} * \emph{W. K\"uhnel}, Tight Polyhedral Submanifolds and Tight Triangulations, Lecture Notes in Math. 1612, Springer, 1995.}


\newcommand{\lazz}{\bibitem[La00]{La00} \emph{F. Lasheras.} An obstruction to 3-dimensional thickening,
Proc. Amer. Math. Soc. 2000. 128. P.~893-902.}

\newcommand{\lfma}{\bibitem[LF]{LF} \url{http://www.map.mpim-bonn.mpg.de/Linking_form}}

\newcommand{\lloe}{\bibitem[LL18]{LL18} \emph{A.S. Levine and T. Lidman.} Simply connected, spineless 4-manifolds, Forum of Math., Sigma, 7 (2019) e14, 1--11, arxiv:1803.01765.}

\newcommand{\lo}{\bibitem[Lo]{Lo} M.~de~Longueville. Notes on the topological Tverberg theorem.
Discrete Math.  247 (2002), no.~1--3, 271--297.
(The paper first appeared in
%a volume of selected papers in honor of Helge Tverberg,
Discrete Math. 241 (2001) 207--233, but the original version suffered from serious publisher's typesetting errors.)}

\newcommand{\loot}{\bibitem[Lo13]{Lo13} \emph{M. de Longueville.} A course in topological combinatorics. Universitext. Springer, New York (2013).}

\newcommand{\lssn}{\bibitem[LS69]{LS69} \emph{W. B. R. Lickorish and L. C. Siebenmann.}
Regular neighborhoods and the stable range,  Trans. Amer. Math. Soc.. 1969. 139. P.~207-230.}

\newcommand{\lsne}{\bibitem[LS98]{LS98} \emph{L. Lovasz and A. Schrijver,}
A Borsuk theorem for antipodal links and a spectral characterization of linklessly embeddable graphs, Proc. Amer. Math. Soc. 126:5 (1998), 1275-1285.}

\newcommand{\ltof}{\bibitem[LT14]{LT14} \emph{E. Lindenstrauss and M. Tsukamoto,} Mean dimension and an embedding problem: an example, Israel J. Math. 199 (2014).}

%http://www.math.huji.ac.il/~elon/Publications/mdim-example.pdf

\newcommand{\lyzf}{\bibitem[LY04]{LY04} * \emph{Y. Lin and A. Yang,} On 3-cutwidth critical graphs, Discrete Mathematics, 275 (2004), 339--346.}

\newcommand{\lz}{\bibitem[LZ]{LZ} * \emph{S. Lando and A. Zvonkin.} Embedded Graphs. Springer.}

%\newcommand{\ma}{\bibitem[MA]{MA} * %\url{http://www.map.mpim-bonn.mpg.de/High_codimension_links#The_linking_coefficient}
%Manifold Atlas Project}

\newcommand{\maez}{\bibitem[Ma80]{Ma80} * R. Mandelbaum, {\em Four-Dimensional Topology: An introduction},
Bull. Amer. Math. Soc. (N.S.) 2 (1980) 1-159.}

\newcommand{\mast}{\bibitem[Ma73]{Ma73} \emph{С. В. Матвеев.} Специальные остовы кусочно-линейных многообразий, Мат. Сборник. 1973. 92. С.~282-293.}

\newcommand{\maste}{\bibitem[Ma73]{Ma73} \emph{S. V. Matveev.} Special skeletons of PL manifolds (in Russian), Mat. Sbornik. 1973. 92. P.~282-293.}

\newcommand{\manz}{\bibitem[Ma90]{Ma90} \emph{W. S.  Massey.} Homotopy classification of 3-component links of codimension greater than 2, Topol.  Appl. 34 (1990) 269--300.}

\newcommand{\mans}{\bibitem[Ma97]{Ma97} \emph{Yu. Makarychev.} A short proof of Kuratowski's graph planarity criterion, J. of Graph Theory, 25 (1997), 129--131.}

\newcommand{\matns}{\bibitem[Mat97]{Mat97} \emph{J. Matou\v sek.} A Helly-type theorem for unions of convex sets. Discr. Comp. Geom., 18:1 (1997) 1-12.}

\newcommand{\mazt}{\bibitem[Ma03]{Ma03} * \emph{J.~Matou{\v{s}}ek.} Using the {B}orsuk-{U}lam theorem:
Lectures on topological methods in combinatorics and geometry. Springer Verlag, 2008.}

%http://books.google.ru/books/about/
%Using\_the\_Borsuk\_Ulam\_Theorem.html?id=tru-TWQFDAIC\&redir\_esc=y

\newcommand{\mazf}{\bibitem[Ma05]{Ma05} \emph{V. Manturov.} A proof of the Vasiliev conjecture on the planarity of singular links, Izv. RAN 2005.}

\newcommand{\metn}{\bibitem[Me29]{Me29} \emph{K. Menger.} \"Uber pl\"attbare Dreiergraphen und Potenzen nicht pl\"attbarer Graphen, Ergebnisse Math. Kolloq., 2 (1929) 30--31.}

\newcommand{\mezf}{\bibitem[Me04]{Me04} \emph{S. Melikhov.} Sphere eversions and realization of mappings, Trudy MIAN 247 (2004) 159-181 (in Russian) arXiv:math.GT/0305158.}

\newcommand{\mezs}{\bibitem[Me06]{Me06} \emph{S. A. Melikhov}, The van Kampen obstruction and its relatives, 	
Proc. Steklov Inst. Math 266 (2009), 142-176 (= Trudy MIAN 266 (2009), 149-183), arXiv:math/0612082.}
%https://arxiv.org/abs/math/0612082

\newcommand{\meoo}{\bibitem[Me11]{Me11} \emph{S. A. Melikhov}, Combinatorics of embeddings, arXiv:1103.5457.}
%\url{https://arxiv.org/abs/1103.5457}

\newcommand{\meos}{\bibitem[Me17]{Me17} \emph{S. Melikhov,} Gauss type formulas for link map invariants, arXiv:1711.03530.}
%unpublished, 2007.

\newcommand{\meoe}{\bibitem[Me18]{Me18} \emph{S. A. Melikhov,} A triple-point Whitney trick, J. Topol. Anal., 2018, 1--6. arXiv:2210.04016.}

%https://doi.org/10.1142/S1793525319500730

\newcommand{\metz}{\bibitem[Me20]{Me20} \emph{S. A. Melikhov,} Topological isotopy and Cochran's derived invariants, in `Topology, Geometry, and Dynamics: Rokhlin Memorial', Contemporary Mathematics, 772, AMS, Providence, RI, 2021. arXiv:2011.01409.}

\newcommand{\mett}{\bibitem[Me22]{Me22} \emph{S. A. Melikhov,} Embeddability of joins and products of polyhedra, Topol. Methods in Nonlinear Analysis, 60:1 (2022), 185-201. arXiv:2210.04015.}

\newcommand{\miff}{\bibitem[Mi54]{Mi54} \emph{J. Milnor,} Link groups, Ann. of Math. 59 (1954), 177--195.}

\newcommand{\miso}{\bibitem[Mi61]{Mi61} \emph{J. Milnor,} A procedure for killing homotopy groups of differentiable manifolds, Proc. Sympos. Pure Math, Vol. III (1961), 39--55.}

\newcommand{\mins}{\bibitem[Mi97]{Mi97} \emph{P. Minc.} Embedding simplicial arcs into the plane, Topol. Proc. 1997. 22. 305--340.}

%\bibitem[MF90]{MF90} \emph{*С. В. Матвеев и А. Т. Фоменко.}
%Алгоритмические и компьютерные методы в трехмерной топологии. М.: Изд-во МГУ. Или: М.: Наука. 1990.

\newcommand{\adnsvr}{\bibitem[MNS]{MNS} * \emph{А. Мирошников, О. Никитенко и А. Скопенков.} Циклы в графах и в гиперграфах: в направлении теории гомологий, arXiv:2406.16705.}
 
\newcommand{\dmnse}{\bibitem[MNS]{MNS} * \emph{A. Miroshnikov, O. Nikitenko, A. Skopenkov.}
Cycles in graphs and in hypergraphs: towards homology theory (in Russian), arXiv:2406.16705.}

\newcommand{\moss}{\bibitem[Mo77]{Mo77} * \emph{E. E. Moise.} Geometric Topology in Dimensions 2 and 3 (GTM), Springer-Verlag, 1977.}

\newcommand{\moen}{\bibitem[Mo89]{Mo89} \textit{B. Mohar}. An obstruction to embedding graphs in
surfaces. Discrete Math. 78 (1989) 135--142.}

\newcommand{\moze}{\bibitem[Mo08]{Mo08} \textit{T. Moriyama}. An invariant of embeddings of 3–manifolds in 6–manifolds and Milnor's triple linking number, J. Math. Sci. Univ. Tokyo, 18 (2011), 193--237. arXiv:0806.3733.}

%https://www.ms.u-tokyo.ac.jp/journal/pdf/jms180204.pdf

\newcommand{\mrst}{\bibitem[MRS+]{MRS+} \emph{A. de Mesmay, Y. Rieck, E. Sedgwick, M. Tancer,}
Embeddability in $\R^3$ is NP-hard. arXiv:1708.07734.}

\newcommand{\mesczs}{\bibitem[MS06]{MS06} \emph{S.A. Melikhov, E.V. Shchepin,} The telescope approach to embeddability of compacta. arXiv:math.GT/0612085.}

\newcommand{\msos}{\bibitem[MS17]{MS17}  \emph{T. Maciazek, A. Sawicki.} Homology groups for particles on one-connected graphs
J. Math. Phys. 58, 062103 (2017). arXiv:1606.03414.}

\newcommand{\mstwof}{\bibitem[MST+]{MST+} \emph{J. Matou\v sek, E. Sedgwick, M. Tancer, U. Wagner}, Embeddability in the 3-sphere is decidable, Journal of the ACM 65:1 (2018) 1--49, arXiv:1402.0815.}

%DOI10.1145/3078632 Proc. 30th Ann. ACM Symp. on Comput. Geom. (SoCG), 2014, 78--84.
%\url{http://arxiv.org/abs/1402.0815}

\newcommand{\mtzo}{\bibitem[MT01]{MT01} * \emph{B. Mohar and C. Thomassen.} Graphs on Surfaces.
The John Hopkins University Press, 2001.}

\newcommand{\mtwoz}{\bibitem[MTW10]{MTW10} \emph{J. Matou\v sek, M. Tancer, U. Wagner.} A geometric proof of
the colored Tverberg theorem, Discr. and Comp. Geometry, 47:2 (2012), 245--265. arXiv:1008.5275.}

%\url{http://arxiv.org/abs/1008.5275}

\newcommand{\mtwoo}{\bibitem[MTW]{MTW} \emph{J. Matou\v sek, M. Tancer, U. Wagner.}
Hardness of embedding simplicial complexes in $\R^d$, J. Eur. Math. Soc. 13:2 (2011), 259--295. arXiv:0807.0336.}

%\url{http://arxiv.org/abs/0807.0336}

%\bibitem[Mu74]{Mu74} J. Muncres, {\it On smoothing...,} Complement to the book by J. W. Milnor and J. D. Stasheff,
%{\it Characteristic Classes}, Ann. of Math. St. 76 (1974), Princeton Univ. Press, Princeton, NJ.

\newcommand{\mwoe}{\bibitem[MW18]{MW18} * \emph{F. Manin, S. Weinberger.} Algorithmic aspects of immersibility and embeddability, Intern. Math. Res. Notices, rnae170. arXiv:1812.09413.}

%https://doi.org/10.1093/imrn/rnae170

\newcommand{\mwsn}{\bibitem[MW69]{MW69} * \emph{J. MacWilliams}. Orthogonal matrices over finite fields. Amer. Math. Monthly, 76 (1969) 152--164.}

\newcommand{\mwofo}{\bibitem[MW14]{MW14} \emph{I. Mabillard and U. Wagner.} Eliminating Tverberg Points, I. An Analogue of the Whitney Trick, Proc. of the 30th Annual Symp. on Comp. Geom. (SoCG'14), ACM, New York, 2014, pp. 171--180.}

\newcommand{\mwof}{\bibitem[MW15]{MW15} \emph{I. Mabillard and U. Wagner.}
Eliminating Higher-Multiplicity Intersections, I. A Whitney Trick for Tverberg-Type Problems. arXiv:1508.02349.}

%\linebreak \url{http://arxiv.org/pdf/1508.02349v1.pdf}

\newcommand{\mwos}{\bibitem[MW16]{MW16} \emph{I. Mabillard and U. Wagner.} Eliminating Higher-Multiplicity Intersections, II. The Deleted Product Criterion in the $r$-Metastable Range. arXiv:1601.00876v2.}

\newcommand{\mwosd}{\bibitem[MW16']{MW16'} \emph{I. Mabillard and U. Wagner.} Eliminating Higher-Multiplicity Intersections, II. The Deleted Product Criterion in the r-Metastable Range,
Proceedings of the 32nd Annual Symposium on Computational Geometry (SoCG'16).}


\newcommand{\neno}{\bibitem[Ne91]{Ne91} \emph{S. Negami.} Ramsey theorems for knots, links and spatial graphs,
Trans. Amer. Math. Soc., 324 (1991), 527--541.}

%book on giving math talks
%https://sites.google.com/site/tutamnguyenphan/book.pdf

%My teaching statement
%https://sites.google.com/site/tutamnguyenphan/TeachingStatement.pdf

\newcommand{\nkon}{\bibitem[NKS]{NKS} * \emph{L. T. Nguyen, J. Kim, B. Shim.}
Low-Rank Matrix Completion: A Contemporary Survey. arXiv:1907.11705.}

\newcommand{\noss}{\bibitem[No76]{No76} * \emph{С. П. Новиков.} Топология-1. М.: Наука, 1976. (Итоги науки и техники. ВИНИТИ. Современные проблемы математики. Основные направления, 12).}

\newcommand{\nszn}{\bibitem[NS09]{NS09} \emph{I. Novik and E. Swartz,} Socles of Buchsbaum modules, complexes and posets, Adv. Math. 222 (2009), 2059-2084. arXiv:0711.0783.}

\newcommand{\nwns}{\bibitem[NW97]{NW97} \emph{A. Nabutovsky, S. Weinberger}. Algorithmic aspects of homeomorphism problems. arXiv:math/9707232.}


\newcommand{\omoe}{\bibitem[Om18]{Om18} * \emph{А. Омельченко,} Теория графов. М.: МЦНМО, 2018.}

\newcommand{\orszo}{\bibitem[ORS]{ORS} \emph{A. Onischenko, D. Repov\v s and A. Skopenkov.}
Resolutions of 2-polyhedra by fake surfaces and embeddings into $\R^4$, Contemp. Math.  288 (2001) 396--400.}

\newcommand{\ossf}{\bibitem[OS74]{OS74} \emph{R. P. Osborne and R. S. Stevens.} Group presentations
corresponding to spines of 3-manifolds, I, Amer. J.~Math. 1974. 96. P.~454-471; II, Amer. J.~Math. 1977. 234.
P.~213-243; III, Amer. J.~Math. 1977. 234 P.~245-251.}

% ??? уточнить номер журнала --- странно, что в 1974 г. номер 96,   а в 1977 уже 234.

\newcommand{\oz}{\bibitem[Oz]{Oz} \emph{M. \"Ozaydin,} Equivariant maps for the symmetric group, unpublished,
\url{http://minds.wisconsin.edu/handle/1793/63829}.}

\newcommand{\panof}{\bibitem[Pan15]{Pan15} \emph{K. Panagiotis.} A note on the topology of irreducible $SO(3)$-manifolds, 	arXiv:1508.06150.}

\newcommand{\paof}{\bibitem[Pa15]{Pa15} \emph{S. Parsa,} On links of vertices in simplicial $d$-complexes embeddable in the Euclidean $2d$-space, Discrete Comput. Geom. 59:3 (2018), 663--679.
This is arXiv:1512.05164v4 up to numbering of sections, theorems etc.; we refer to numbering in arxiv version.
Correction: Discrete Comput. Geom. 64:3 (2020) 227--228.}

\newcommand{\paoe}{\bibitem[Pa18]{Pa18} \emph{S. Parsa,} On links of vertices in simplicial $d$-complexes
embeddable in the euclidean $2d$-space, arXiv:1512.05164v6.}

\newcommand{\patz}{\bibitem[Pa20]{Pa20} \emph{S. Parsa,} On links of vertices in simplicial $d$-complexes
embeddable in the euclidean $2d$-space, arXiv:1512.05164v8.}

%still no reference to Sk

\newcommand{\patzl}{\bibitem[Pa20]{Pa20} \emph{S. Parsa,}
Correction to: On the Links of Vertices in Simplicial $d$-Complexes Embeddable in the Euclidean $2d$-Space,
Discrete Comput. Geom. 64:3 (2020) 227--228.}

\newcommand{\patza}{\bibitem[Pa20]{Pa20} \emph{S. Parsa,} On the Smith classes, the van Kampen obstruction and embeddability of $[3]*K$, arXiv:2001.06478.}

\newcommand{\patzb}{\bibitem[Pa20b]{Pa20b} \emph{S. Parsa,} On the embeddability of $[3]*K$, arXiv:2001.06506.}

\newcommand{\pato}{\bibitem[Pa21]{Pa21} \emph{S. Parsa,} Instability of the Smith index under joins and applications to embeddability, Trans. Amer. Math. Soc. 375 (2022), 7149--7185, arXiv:2103.02563.}

\newcommand{\pak}{\bibitem[Pa]{Pa} * \emph{I. Pak}, Lectures on Discrete and Polyhedral Geometry, \url{http://www.math.ucla.edu/~pak/geompol8.pdf}.}

\newcommand{\peze}{\bibitem[Pe08]{Pe08} \emph{Д. Пермяков.} Классификация погружений графов в плоскость,
Вестник МГУ, сер.1, 2008, N5, 55-56.}

\newcommand{\peos}{\bibitem[Pe16]{Pe16} \emph{Д. Пермяков.} Матем. сб., 207:6 (2016),  93--112.}

\newcommand{\pest}{\bibitem[Pe72]{Pe72} * \emph{B. B. Peterson.} The Geometry of Radon's Theorem, Amer. Math. Monthly 79 (1972), 949-963.}

%\url{http://hipatia.dma.ulpgc.es/profesores/personal/aph/ficheros/resolver/ficheros/crp/radon.pdf}.}

\newcommand{\prnf}{\bibitem[Pr95]{Pr95} * \emph{V. V. Prasolov.} Intuitive topology. Amer. Math. Soc., Providence, R.I., 1995.}

\newcommand{\prnfr}{\bibitem[Pr95]{Pr95} * \emph{В. В. Прасолов.} Наглядная топология. М.: МЦНМО, 1995.}

%http://carlossicoli.free.fr/P/Prasolov\_V.V.-Intuitive\_topology\_4-Universities\_Press\_India\_1998.pdf
%http://prasolov.loegria.net/nagltop-3ed.pdf

\newcommand{\przs}{\bibitem[Pr06]{Pr06} * \emph{V. V. Prasolov.}
Elements of Combinatorial and Differential Topology, 2006, GSM 74, Amer. Math. Soc., Providence, RI.}

\newcommand{\przsru}{\bibitem[Pr04]{Pr04} * \emph{В. В. Прасолов.}
Элементы комбинаторной и дифференциальной топологии. М.: МЦНМО, 2004. \url{http://www.mccme.ru/prasolov}.}

\newcommand{\przse}{\bibitem[Pr07]{Pr07} * \emph{V. V. Prasolov.} Elements of homology theory. 2007, GSM 74, Amer. Math. Soc., Providence, RI.}

%Russian version: Moscow, MCCME, 2006,

\newcommand{\przseru}{\bibitem[Pr06]{Pr06} * \emph{В. В. Прасолов.} Элементы теории гомологий. М.: МЦНМО, 2006.}

% \url{http://www.mccme.ru/prasolov}.}

\newcommand{\psns}{\bibitem[PS96]{PS96} * \emph{V. V. Prasolov, A. B. Sossinsky } Knots, Links, Braids, and 3-manifolds. Amer. Math. Soc. Publ., Providence, R.I., 1996.}

%Russian version:  \url{http://www.mccme.ru/prasolov}.}

\newcommand{\pszf}{\bibitem[PS05]{PS05} * \emph{В. В. Прасолов и М. Б. Скопенков.}
Рамсеевская теория зацеплений, Мат. Просвещение. 2005. 9. С.~108--115.}

\newcommand{\pszfen}{\bibitem[PS05]{PS05} * \emph{V. V. Prasolov and M.B. Skopenkov.}
Ramsey link theory, Mat, Prosvescheniye, 9 (2005), 108--115.}

\newcommand{\psoo}{\bibitem[PS11]{PS11} \emph{Y. Ponty and C. Saule.} A combinatorial framework for designing (pseudoknotted) RNA algorithms, Proc. of the 11th Intern. Workshop on Algorithms in Bioinformatics, WABI'11, 250--269.}

%\url{http://dl.acm.org/citation.cfm?id=2039967}

\newcommand{\pstz}{\bibitem[PS20]{PS20} \emph{S. Parsa and A. Skopenkov.} On embeddability of joins and their `factors', Topol. Appl., 326 (2023) 108409, arXiv:2003.12285.}

%https://doi.org/10.1016/j.topol.2023.108409

%\bibitem[Pa20]{Pa20} S. Parsa, On the embeddability of $[3]*K$. arXiv:2001.06506.

\newcommand{\psszn}{\bibitem[PSS]{PSS} \emph{M. J. Pelsmajer, M. Schaefer and D. Stasi.} Strong Hanani-Tutte on the projective plane. SIAM J. Discrete Math., 23:3 (2009) 1317--1323.}

\newcommand{\psszs}{\bibitem[PSS]{PSS} \emph{M. J. Pelsmajer, M. Schaefer, and D. \v Stefankovi\v c.}
Removing even crossings. J. Combin. Theory Ser. B, 97(4):489–500, 2007.}

\newcommand{\pton}{\bibitem[PT19]{PT19} \emph{P. Pat\'ak and M. Tancer.} Embeddings of $k$-complexes into $2k$-manifolds. Discrete Comput. Geom. 71 (2024), 960--991. arXiv:1904.02404.}

\newcommand{\pw}{\bibitem[PW]{PW} \emph{I. Pak, S. Wilson}, G\lowercase{EOMETRIC REALIZATIONS OF POLYHEDRAL COMPLEXES}, \linebreak \url{http://www.math.ucla.edu/~pak/papers/Fary-full31.pdf}.}


\newcommand{\razf}{\bibitem[RA05]{RA05} * \emph{J. L. Ram\'irez Alfons\'in.} Knots and links in spatial graphs: a survey. Discrete Math., 302 (2005), 225--242.}

\newcommand{\rep}{\bibitem[Rep]{Rep} Referee's report on the paper ``Some `converses' to intrinsic linking theorems', \url{https://www.mccme.ru/circles/oim/materials/ksreport.pdf}}

\newcommand{\rnoo}{\bibitem[RN11]{RN11} * \emph{R. L. Ricca, B. Nipoti.} Gauss' linking number revisited.
J. of Knot Theory and Its Ramif. 20:10 (2011) 1325--1343. \url{https://www.maths.ed.ac.uk/~v1ranick/papers/ricca.pdf} .}

\newcommand{\rrstz}{\bibitem[RRS]{RRS} * \emph{V. Retinskiy, A. Ryabichev and A. Skopenkov.}
Motivated exposition of the proof of the Tverberg Theorem (in Russian).
Mat. Prosveschenie, 27 (2021), 166--169. arXiv:2008.08361.}

%[RRS] В. Ретинский, А. Рябичев и А. Скопенков. Мотивированное изложение доказательства теоремы Тверберга,
%Мат. Просвещение, 27 (2021), 166--169. arXiv:2008.08361.

\newcommand{\rssec}{\bibitem[RS68]{RS68} \emph{C. P. Rourke and B. J. Sanderson,} Block bundles II, Ann. of Math. (2), 87 (1968) 431--483.}

\newcommand{\rsst}{\bibitem[RS72]{RS72} * \emph{C. P. Rourke and B. J. Sanderson,}
\newblock Introduction to Piecewise-Linear Topology,
\newblock \emph{Ergebn.\ der Math.} 69, Springer-Verlag, Berlin, 1972.}

\newcommand{\rsstr}{\bibitem[RS72]{RS72} * \emph{К. П. Рурк и Б. Дж. Сандерсон.} Введение в кусочно-линейную топологию, Москва. Мир. 1974.}

\newcommand{\rsns}{\bibitem[RS96]{RS96} * \emph{D. Repov\v s and A. B. Skopenkov.}
Embeddability and isotopy of polyhedra in Euclidean spaces,
%Труды МИРАН. 1996. 212;
Proc. of the Steklov Inst. Math. 1996. 212. P.~173-188.}

\newcommand{\rsne}{\bibitem[RS98]{RS98} \emph{D. Repov\v s and A. B. Skopenkov.}
A deleted product criterion for approximability of a map by embeddings, Topol. Appl. 1998. 87 P.~1-19.}

\newcommand{\rsnn}{\bibitem[RS99]{RS99} * \emph{D. Repov\v s and A. B. Skopenkov.} New results on embeddings of polyhedra and manifolds into Euclidean spaces,
%(in Russian), Uspekhi Mat. Nauk, 54:6 (1999) 61--109. English transl.:
Russ. Math. Surv. 54:6 (1999), 1149--1196.}

%\emph{Д. Реповш и А. Скопенков.}
%Новые результаты о вложениях полиэдров и многообразий в евклидовы пространства, УМН. 1999. 54:6. С.~61-109.

\newcommand{\rsnnd}{\bibitem[RS99']{RS99'} * \emph{Д. Реповш и А. Скопенков.}
Кольца Борромео и препятствия к вложимости, Труды МИРАН. 1999. 225. С.~331-338.}

\newcommand{\rszz}{\bibitem[RS00]{RS00} \emph{D. Repov\v s and A. Skopenkov.} Cell-like resolutions of polyhedra by special ones,  Colloq. Math. 2000. 86:2. P. 231--237.}

\newcommand{\rszzd}{\bibitem[RS00']{RS00'} * \emph{Д. Реповш и А. Скопенков.} Характеристические классы для начинающих, Мат. Просвещение. 2000. 4. С.~151-176.}

\newcommand{\rszo}{\bibitem[RS01]{RS01} \emph{D. Repovs and A. Skopenkov.} On contractible $n$-dimensional compacta, non-embeddable into $\R^{2n}$, Proc. Amer. Math. Soc. 129 (2001) 627--628.}

\newcommand{\rszt}{\bibitem[RS02]{RS02} * \emph{Д. Реповш и А. Скопенков.} Теория препятствий для начинающих,
Мат. Просвещение. 2002. 6. C.~60-77.}

\newcommand{\rszf}{\bibitem[RS04]{RS04} \emph{N. Robertson and P. Seymour.} Graph Minors. XX. Wagner's conjecture, J. of Comb. Theory, B, 92:2 (2004) 325--357.}

\newcommand{\rssnf}{\bibitem[RSS]{RSS95} \emph{D. Repov\v s, A. B. Skopenkov  and E. V. \v S\v cepin.}
On uncountable collections of continua and their span, Colloq. Math. 1995. 69:2. P.~289-296.}

\newcommand{\rssnfd}{\bibitem[RSS']{RSS95'} \emph{D. Repov\v s, A. B. Skopenkov and E. V \v S\v cepin.}
On embeddability of $X\times I$ into Euclidean space, Houston J.~Math. 1995. 21. P.~199-204.}

\newcommand{\rssz}{\bibitem[RSS+]{RSSZ} * \emph{A. Rukhovich, A. Skopenkov, M. Skopenkov, A. Zimin},
Realizability of hypergraphs, \url{https://www.turgor.ru/lktg/2013/1/1-1en.pdf} .}

%\url{http://www.turgor.ru/lktg/2013/1/index.htm}

\newcommand{\rstnt}{\bibitem[RST']{RST93} \emph{N. Robertson, P. Seymour and R. Thomas}, Linkless embeddings of graphs in 3-space, Bull. of the Amer. Math. Soc., 21 (1993) 84--89.}

\newcommand{\rstno}{\bibitem[RST]{RST91} * \emph{N. Robertson, P. Seymour and R. Thomas}, A survey of
linkless embeddings, Graph Structure Theory (Seattle, WA, 1991), Contemp. Math. 147, (1993) 125--136.}
%Amer. Math. Soc. Providence, RI,

%\bibitem[Ru]{Ru} \emph{Yu. B. Rudyak.} Piecewise linear structures on topological manifolds.
%\url{http://arxiv.org/abs/math/0105047}

\newcommand{\rwzl}{\bibitem[RWZ+]{RWZ+} \emph{Y. Ren, C. Wen, S. Zhen, N. Lei, F. Luo, D.X. Gu},
Characteristic class of isotopy for surfaces, J. Syst. Sci. Complex. 33 (2020) 2139--2156.}


\newcommand{\saeo}{\bibitem[Sa81]{Sa81} \emph{H. Sachs.} On spatial representation of finite graphs,
in: Finite and infinite sets (Eger, 1981), 649--662, Colloq. Math. Soc. Janos Bolyai, 37, North-Holland, Amsterdam, 1984.}

\newcommand{\sano}{\bibitem[Sa91]{Sa91} \emph{K. S. Sarkaria.}
A one-dimensional Whitney trick and Kuratowski's graph planarity criterion, Israel J.~Math. 73 (1991), 79--89.} 

%\url{http://kssarkaria.org/docs/One-dimensional.pdf}.}

\newcommand{\sanov}{\bibitem[Sa91g]{Sa91g} \emph{K. S. Sarkaria.} A generalized Van Kampen-Flores theorem, Proc. Amer. Math. Soc. 111 (1991), 559--565.}

\newcommand{\sant}{\bibitem[Sa92]{Sa92} \emph{K. S. Sarkaria.} Tverberg’s theorem via number fields. Israel J. Math., 79:317–320, 1992.}

\newcommand{\sann}{\bibitem[Sa99]{Sa99} O. Saeki {\em On punctured 3-manifolds in 5-sphere}, Hiroshima Math. J. 29 (1999) 255--272.}

%MR1704247 (2000h:57045)

\newcommand{\sazz}{\bibitem[Sa00]{Sa00} \emph{K. S. Sarkaria.} Tverberg partitions and Borsuk-Ulam theorems. Pacific J. Math., 196:1 (2000) 231--241.}

\newcommand{\sczf}{\bibitem[Sc04]{Sc04} \emph{T. Sch\"oneborn.} On the Topological Tverberg Theorem, arXiv:math/0405393.}

%http://arxiv.org/abs/math/0405393

\newcommand{\scot}{\bibitem[Sc13]{Sc13} * \emph{M. Schaefer.} Hanani-Tutte and related results. In Geometry --- intuitive, discrete, and convex, Bolyai Soc. Math. Stud., 24 (2013), 259--299.
\url{http://ovid.cs.depaul.edu/documents/htsurvey.pdf} }

%J\'anos Bolyai Math. Soc., Budapest, 2013.

\newcommand{\sctz}{\bibitem[Sc20]{Sc20} \emph{M. Schaefer.} The Graph Crossing Number and
its Variants: A Survey. The Electr. J. of Comb. (2020), DS21, \url{https://www.combinatorics.org/files/Surveys/ds21/ds21v5-2020.pdf}}


\newcommand{\scef}{\bibitem[Sc84]{Sc84} \emph{E.~V.~\v S\v cepin.} Soft mappings of manifolds, Russian Math. Surveys, 39:5 (1984).}
%, 209--224 (in Russian).

\newcommand{\shfs}{\bibitem[Sh57]{Sh57} \emph{A. Shapiro,} Obstructions to the embedding of a complex in a Euclidean space, I, The first obstruction, Ann. Math. 66 (1957), 256--269.}

%\url{http://www.maths.ed.ac.uk/~aar/papers/shapiro2.pdf}.}

\newcommand{\shen}{\bibitem[Sh89]{Sh89} * \emph{Ю. А. Шашкин,} Неподвижные точки, М., Наука, 1989.}

\newcommand{\shoe}{\bibitem[Sh18]{Sh18} * \emph{S. Shlosman},  Topological Tverberg Theorem: the proofs and the counterexamples, Russian Math. Surveys, 73:2 (2018), 175–182. arXiv:1804.03120.}

\newcommand{\sisn}{\bibitem[Si69]{Si69} \emph{K. Sieklucki.} Realization of mappings, Fund. Math. 1969. 65. P.~325-343.}

\newcommand{\sios}{\bibitem[Si16]{Si16} \emph{S. Simon,} Average-Value Tverberg Partitions via Finite Fourier Analysis, Israel J. Math., 216 (2016), 891-904, arXiv:1501.04612.}

%\url{http://arxiv.org/abs/1501.04612}


\newcommand{\sknf}{\bibitem[Sk94]{Sk94} \emph{А. Скопенков.} Геометрическое доказательство теоремы
Нойвирта об утолщаемости 2-мерных полиэдров, Math. Notes. 1995. 58:5. P.~1244-1247.}

% Мат. заметки. 1994. 56:2. С.~94-98 English transl.

\newcommand{\skns}{\bibitem[Sk97]{Sk97} \emph{A. Skopenkov,} On the deleted product criterion for embeddability of manifolds in $\R^m$, Comment. Math. Helv. 72 (1997), 543--555.}

\newcommand{\skne}{\bibitem[Sk98]{Sk98} \emph{A. B. Skopenkov.} On the deleted product criterion for embeddability in $\R^m$, Proc. Amer. Math. Soc., 126:8 (1998), 2467-2476.}

\newcommand{\skzz}{\bibitem[Sk00]{Sk00} \emph{A. Skopenkov,} On the generalized Massey--Rolfsen invariant for link maps, Fund. Math. 165 (2000), 1--15.}

\newcommand{\skzt}{\bibitem[Sk02]{Sk02} \emph{A. Skopenkov,} On the Haefliger-Hirsch-Wu invariants for embeddings and immersions, Comment. Math. Helv. 77 (2002), 78--124.}

\newcommand{\skzth}{\bibitem[Sk03]{Sk03} \emph{M. Skopenkov,} Embedding products of graphs into Euclidean spaces,
Fund. Math. 179 (2003),~191--198, arXiv:0808.1199.}

\newcommand{\skzthd}{\bibitem[Sk03']{Sk03'} \emph{M. Skopenkov,} On approximability by embeddings of cycles in the plane, Topol. Appl. 134 (2003),~1--22, arXiv:0808.1187.}

\newcommand{\skzf}{\bibitem[Sk05]{Sk05} * \emph{A. Skopenkov,}
On the Kuratowski graph planarity criterion, Mat. Prosveschenie, 9 (2005), 116-128. arXiv:0802.3820.}

%А. Скопенков Вокруг критерия Куратовского планарности графов, Мат. Просвещение. 2005. 9. С.~116-128.

\newcommand{\skzs}{\bibitem[Sk05i]{Sk05i} \emph{A. Skopenkov,} A new invariant and parametric connected sum of embeddings, Fund. Math. 197 (2007) 253--269. arxiv:math/0509621.}

\newcommand{\skzei}{\bibitem[Sk05]{Sk05} \emph{A.  Skopenkov,} A classification of smooth embeddings of
4-manifolds in 7-space, I, Topol. Appl., 157 (2010) 2094--2110. arXiv:math/0512594.}

\newcommand{\skze}{\bibitem[Sk06]{Sk06} * \emph{A. Skopenkov,} Embedding and knotting of manifolds in Euclidean spaces, London Math. Soc. Lect. Notes, 347 (2008) 248--342. arXiv:math/0604045.}

\newcommand{\skzsi}{\bibitem[Sk06']{Sk06'} \emph{A. Skopenkov,} A classification of smooth embeddings of 3-manifolds in 6-space, Math. Zeitschrift, 260:3 (2008) 647--672. arxiv:math/0603429.}

\newcommand{\skozp}{\bibitem[Sk08]{Sk08} \emph{A.  Skopenkov,} Embeddings of $k$-connected $n$-manifolds into
$\R^{2n-k-1}$. arxiv:math/0812.0263; earlier version published in Proc. Amer. Math. Soc., 138 (2010) 3377--3389.}

\newcommand{\skoz}{\bibitem[Sk10]{Sk10} * \emph{А. Скопенков,} Вложения в плоскость графов с вершинами степени 4,
Мат. просвещение, 21 (2017), arXiv:1008.4940.}

\newcommand{\skoo}{\bibitem[Sk11]{Sk11} \emph{M. Skopenkov,} When is the set of embeddings finite up to isotopy? Intern. J. Math. 26:7 (2015), 28 pp. arXiv:1106.1878.}

\newcommand{\skofo}{\bibitem[Sk14]{Sk14} \emph{A. Skopenkov,} How do autodiffeomorphisms act on embeddings, Proc. A of the Royal Society of Edinburgh, 148:4 (2018), 835--848. arXiv:1402.1853.}

\newcommand{\sks}{\bibitem[Sk14]{Sk14} * \emph{A. Skopenkov,} Realizability of hypergraphs and intrinsic linking  theory, Mat. Prosveschenie, 32 (2024), 125--159, arXiv:1402.0658.}

\newcommand{\sksr}{\bibitem[Sk14]{Sk14} * \emph{А. Скопенков,} Реализуемость гиперграфов и неотъемлемая зацепленность, Мат. просвещение, 32 (2024), 125--159. arXiv:1402.0658.}

%\url{http://arxiv.org/abs/1402.0658}

\newcommand{\skof}{\bibitem[Sk15]{Sk15} * \emph{А. Скопенков,} Алгебраическая топология с геометрической точки зрения, Москва, МЦНМО, 2015 (1е издание).}

\newcommand{\skofe}{\bibitem[Sk15]{Sk15} * \emph{A. Skopenkov,} Algebraic Topology From Geometric Viewpoint (in Russian), MCCME, Moscow, 2015 (1st edition). }

\newcommand{\skofel}{\bibitem[Sk15e]{Sk15e} * \emph{А. Скопенков,} Алгебраическая топология
с геометрической точки зрения, эл. версия, \url{http://www.mccme.ru/circles/oim/home/combtop13.htm\#photo}}

%http://arxiv.org/abs/0808.1395,

\newcommand{\skotzr}{\bibitem[Sk20]{Sk20} * \emph{А. Скопенков,} Алгебраическая топология с геометрической точки зрения, Москва, МЦНМО, 2020 (2е издание).
Обновляемая версия части книги: \url{http://www.mccme.ru/circles/oim/obstruct.pdf}}

\newcommand{\skotz}{\bibitem[Sk20]{Sk20} * \emph{A. Skopenkov,} Algebraic Topology From Geometric Standpoint (in Russian), MCCME, Moscow, 2020 (2nd edition).
%Electronic version: \url{http://www.mccme.ru/circles/oim/home/combtop13.htm\#photo}
Update of a part: \url{http://www.mccme.ru/circles/oim/obstruct.pdf} .
Part of the English translation: \url{https://www.mccme.ru/circles/oim/obstructeng.pdf}.}

%(Accepted for English translation to  `Moscow Lecture Notes' series of Springer; refused by Springer to be published under a reasonable contract.
%Submitted to AMS in 2022; positive reviews requiring revision received in 2023; the author is recommended to submit the book elsewhere, without AMS explicitly stating the required rejection decision.)


\newcommand{\skofp}{\bibitem[Sk15]{Sk15} \emph{A. Skopenkov,} Classification of knotted tori,
Proc. A of the Royal Soc. of Edinburgh, 150:2 (2020), 549-567. Full version: arXiv:1502.04470.}

%\url{https://doi.org/10.1017/prm.2018.141},

\newcommand{\skos}{\bibitem[Sk16]{Sk16} * \emph{A. Skopenkov,} A user's guide to the topological Tverberg Conjecture, arXiv:1605.05141v5. Abridged earlier published version: Russian Math. Surveys, 73:2 (2018), 323--353.}

%Section 4 of the published version is available as {\it A. Skopenkov,}
%On van Kampen-Flores, Conway-Gordon-Sachs and Radon theorems, arXiv:1704.00300.

%https://doi.org/10.1070/rm9774

\newcommand{\skosd}{\bibitem[Sk16']{Sk16'} * \emph{A. Skopenkov,} Stability of intersections of graphs in the plane and the van Kampen obstruction, Topol. Appl. 240(2018) 259--269, arXiv:1609.03727.}

%https://doi.org/10.1016/j.topol.2018.02.029

\newcommand{\skosc}{\bibitem[Sk16c]{Sk16c} * \emph{A. Skopenkov,}  Embeddings in Euclidean space: an introduction to their classification, to appear in Boll. Man. Atl. %\linebreak
\url{http://www.map.mpim-bonn.mpg.de/Embeddings_in_Euclidean_space:_an_introduction_to_their_classification}}

\newcommand{\skosie}{\bibitem[Sk16e]{Sk16e} * \emph{A. Skopenkov,} Embeddings just below the stable range: classification, to appear in Boll. Man. Atl.
%\linebreak
\url{http://www.map.mpim-bonn.mpg.de/Embeddings_just_below_the_stable_range:_classification}}

\newcommand{\skost}{\bibitem[Sk16t]{Sk16t} * \emph{A. Skopenkov,} 3-manifolds in 6-space, to appear in Boll. Man. Atl. \url{http://www.map.mpim-bonn.mpg.de/3-manifolds_in_6-space}.}

\newcommand{\skosf}{\bibitem[Sk16f]{Sk16f} * \emph{A. Skopenkov,} 4-manifolds in 7-space, to appear in Boll. Man. Atl. \url{http://www.map.mpim-bonn.mpg.de/4-manifolds_in_7-space}.}

\newcommand{\skosh}{\bibitem[Sk16h]{Sk16h} * \emph{A. Skopenkov,} High codimension links, to appear in Boll. Man. Atl.
\linebreak
\url{http://www.map.mpim-bonn.mpg.de/High_codimension_links}.}

\newcommand{\skosi}{\bibitem[Sk16i]{Sk16i} * \emph{A. Skopenkov,} Isotopy, submitted to Boll. Man. Atl.
\url{http://www.map.mpim-bonn.mpg.de/Isotopy}.}

\newcommand{\skosk}{\bibitem[Sk16k]{Sk16k} * \emph{A. Skopenkov,} Knotted tori,
%\linebreak
\url{http://www.map.mpim-bonn.mpg.de/Knotted_tori}.}

\newcommand{\skoss}{\bibitem[Sk16s]{Sk16s} * \emph{A. Skopenkov,} Knots, i.e. embeddings of spheres,
\linebreak
\url{http://www.map.mpim-bonn.mpg.de/Knots,_i.e._embeddings_of_spheres}.}

\newcommand{\skose}{\bibitem[Sk17]{Sk17} \emph{A. Skopenkov,}
Eliminating higher-multiplicity intersections in the metastable dimension range. arXiv:1704.00143.}

\newcommand{\skosed}{\bibitem[Sk17v]{Sk17v} * \emph{A. Skopenkov,}
On van Kampen-Flores, Conway-Gordon-Sachs and Radon theorems,  arXiv:1704.00300.}

\newcommand{\sk}{\bibitem[Sk17o]{Sk17o} \emph{A. Skopenkov,} On the metastable Mabillard-Wagner conjecture.  arXiv:1702.04259.}

\newcommand{\skmos}{\bibitem[Sk17d]{Sk17d} \emph{M. Skopenkov}. Discrete field theory: symmetries and conservation laws, arXiv:1709.04788.}

\newcommand{\skoe}{\bibitem[Sk18]{Sk18} * \emph{A. Skopenkov.} Invariants of graph drawings in the plane.
Arnold Math. J., 6 (2020) 21--55; full version: arXiv:1805.10237.}

%DOI 10.1007/s40598-019-00128-5
%https://link.springer.com/article/10.1007/s40598-019-00128-5

\newcommand{\skoer}{\bibitem[Sk18]{Sk18} * \emph{А. Скопенков,} Инварианты изображений графов на плоскости,
Мат. просвещение, 31 (2023), 74-127. arXiv:1805.10237.}

%\url{https://old.mccme.ru//free-books//matpros/pdf/mp-31.pdf}

\newcommand{\sktthd}{\bibitem[Sk23']{Sk23'} * \emph{A. Skopenkov.} Invariants of graph drawings in the plane (in Russian). Mat. Prosveschenie, 31 (2023), 74-127. arXiv:1805.10237.}

\newcommand{\skoeo}{\bibitem[Sk18o]{Sk18o} * \emph{A. Skopenkov.} A short exposition of S. Parsa's theorems on intrinsic linking and non-realizability. Discr. Comp. Geom. 65:2 (2021), 584--585; full version:  arXiv:1808.08363.}

%http://link.springer.com/10.1007/s00454-019-00158-y
%https://link.springer.com/epdf/10.1007/s00454-019-00158-y?sharing_token=
%XGdS8VlnK-pDzbEwtf3mBfe4RwlQNchNByi7wbcMAY6PJ4b9nP990kJl5DGh-51rxqVRnymDKW334OTc8lvTb8dMBR5PKi5EgDDh7R4xQBwahNi
%GoRrM0MA4cWoY55UZZGfGzKyrd9k-v04HAYod1Bqt4TGtvvU_Gv8j3-aCwKA%3D

\newcommand{\skona}{\bibitem[Sk19]{Sk19} * \emph{A. Skopenkov,} A short exposition of the Levine-Lidman example of spineless 4-manifolds, arXiv:1911.07330.}

\newcommand{\sktze}{\bibitem[Sk21m]{Sk21m} * \emph{A. Skopenkov.} Mathematics via Problems. Part 1: Algebra. Amer. Math. Soc., Providence, 2021. Preliminary version: \url{https://www.mccme.ru/circles/oim/algebra_eng.pdf}}

\newcommand{\sktz}{\bibitem[Sk20u]{Sk20u} * \emph{A. Skopenkov.} A user's guide to basic knot and link theory,
in: Topology, Geometry, and Dynamics, Contemporary Mathematics, vol. 772, Amer. Math. Soc., Providence, RI, 2021, pp. 281--309.
%https://doi.org/10.1090/conm/772.
Russian version: Mat. Prosveschenie 27 (2021), 128--165. arXiv:2001.01472.}

\newcommand{\sktzru}{\bibitem[Sk20u]{Sk20u} * \emph{А. Скопенков.} Основы теории узлов и зацеплений для пользователя, Мат. просвещение, 27 (2021), 128--165. arXiv:2001.01472.}

\newcommand{\sktzo}{\bibitem[Sk20o]{Sk20o} \emph{A. Skopenkov.} On some results of S. Abramyan and T. Panov, arXiv:2005.11152.}

\newcommand{\sktzr}{\bibitem[Sk20e]{Sk20e} * \emph{A. Skopenkov.}
Extendability of simplicial maps is undecidable, Discr. Comp. Geom., 69:1 (2023), 250--259, arXiv:2008.00492.}

%https://rdcu.be/c0GiZ
%10.1007/s00454-022-00454-0

\newcommand{\sktzd}{\bibitem[Sk21d]{Sk21d} * \emph{A. Skopenkov.}
On different reliability standards in current mathematical research, arXiv:2101.03745.
More often updated version: \url{https://www.mccme.ru/circles/oim/rese_inte.pdf}.}

\newcommand{\sktt}{\bibitem[Sk22]{Sk22} * \emph{A. Skopenkov.} Invariants of embeddings of 2-surfaces in 3-space,
arXiv:2201.10944.}

\newcommand{\skttn}{\bibitem[Sk22n]{Sk22n} * \emph{A. Skopenkov}, Netflix problem and realization of (hyper)graphs, \url{https://www.mccme.ru/circles/oim/home/netflix20sep.pdf}}

\newcommand{\sktth}{\bibitem[Sk23]{Sk23} \emph{A. Skopenkov.}  To S. Parsa's theorem on embeddability of joins, arXiv:2302.11537.}

\newcommand{\sktf}{\bibitem[Sk24]{Sk24} * \emph{A. Skopenkov.} Double and triple linking numbers in space (in Russian). Mat. Prosveschenie, 33 (2024), 87--132.}

\newcommand{\sktfr}{\bibitem[Sk24]{Sk24} * \emph{А. Скопенков.} Двойные и тройные коэффициенты зацепления в пространстве. Мат. просвещение, 33 (2024), 87--132.}

\newcommand{\sktfb}{\bibitem[Sk24]{Sk24} \emph{A. Skopenkov.} The band connected sum and the second Kirby move for higher-dimensional links (full version), arXiv:2406.15367.}

% \url{http://www.map.mpim-bonn.mpg.de/images/b/b1/Ktori_link.pdf}.

\newcommand{\sktfe}{\bibitem[Sk24]{Sk24} \emph{A. Skopenkov.}
Embeddings of $k$-complexes in $2k$-manifolds and minimum rank of partial symmetric matrices, arXiv:2112.06636v4.}

\newcommand{\skd}{\bibitem[Sk]{Sk} * \emph{А. Скопенков.} Алгебраическая топология с алгоритмической точки зрения, %\linebreak
\url{http://www.mccme.ru/circles/oim/algor.pdf}.}

\newcommand{\skde}{\bibitem[Sk]{Sk} * \emph{A. Skopenkov.} Algebraic Topology From Algorithmic Standpoint, draft of a book, mostly in Russian,
%\linebreak
\url{http://www.mccme.ru/circles/oim/algor.pdf}.}

%LaTeX Error: Command \ss already defined Or name \end... illegal, see p.192 of the manual.

\newcommand{\skon}{\bibitem[Skw]{Skw} * \emph{A. Skopenkov.} Whitney trick for eliminating multiple intersections, slides for talks at St Petersburg, Brno, Kiev, Moscow,  \url{https://www.mccme.ru/circles/oim/eliminat_talk.pdf}.}

\newcommand{\skl}{\bibitem[EEF]{EEF} * {\it Proposed by D. Eliseev, A. Enne, M. Fedorov, A. Glebov, N. Khoroshavkina, E. Morozov, A. Skopenkov, R. \v Zivaljevi\'c.}
A user's guide to knot and link theory, \url{https://www.turgor.ru/lktg/2019/3} .}

\newcommand{\skr}{\bibitem[Skr]{Skr} * \emph{A. Skopenkov.} Realizability of hypergraphs, slides for talks,  \url{https://www.mccme.ru/circles/oim/algor1_beamer.pdf}.}

%obsolete

\newcommand{\skt}{\bibitem[Skt]{Skt} * \emph{A. Skopenkov.} Transparent anonymous peer review,
\linebreak
\url{https://www.mccme.ru/circles/oim/home/transp_peer_review.htm} .}

\newcommand{\rslktg}{\bibitem[KRR+]{RRSl} * Towards higher-dimensional combinatorial geometry, presented by
E. Kogan, V. Retinskiy, E. Riabov and A. Skopenkov, \url{https://www.mccme.ru/circles/oim/multicomb.pdf} .}

%\url{https://www.turgor.ru/lktg/2020/3/index.html}


\newcommand{\sm}{\bibitem[Sm]{Sm} S. Smirnov.}

\newcommand{\sper}{\bibitem[Sp]{Sp} * Sperner's lemma defeats the rental harmony problem, \url{https://www.youtube.com/watch?v=7s-YM-kcKME}.}

\newcommand{\sset}{\bibitem[SS83]{SS83} \emph{Е. В. Щепин, М. А. Штанько.} Спектральный критерий вложимости компактов в евклидовы пространства, Труды Ленинградской Международной Топологической конференции. Л.: Наука, 1983. С.~135-142.}

\newcommand{\ssnt}{\bibitem[SS92]{SS92} \emph{J.~Segal and S.~Spie\.z.} Quasi embeddings and embeddings of polyhedra in $\R^m$,  Topol. Appl., 45 (1992) 275--282.}

\newcommand{\sszt}{\bibitem[SS03]{SS03} \emph{F. W. Simmons and F. E. Su.}
Consensus-halving via theorems of Borsuk-Ulam and Tucker, Math. Social Sciences 45 (2003) 15–25. \url{https://www.math.hmc.edu/~su/papers.dir/tucker.pdf}.}

\newcommand{\ssot}{\bibitem[SS13]{SS13} \emph{M. Schaefer and D. \v Stefankovi\v c.} Block additivity of $\Z_2$-embeddings. In Graph drawing, volume 8242 of Lecture Notes in Comput. Sci., 185--195.
Springer, Cham, 2013. \url{http://ovid.cs.depaul.edu/documents/genus.pdf}}

\newcommand{\sstt}{\bibitem[SS23]{SS23} \emph{A. Skopenkov and O. Styrt,} Embeddability of joinpowers, and minimal rank of partial matrices, arXiv:2305.06339.}

\newcommand{\sssne}{\bibitem[SSS]{SSS} \emph{J. Segal, A. Skopenkov and S. Spie\. z.}
Embeddings of polyhedra in $\R^m$ and the deleted product obstruction, Topol. Appl., 85 (1998), 225-234.}

\newcommand{\sstnf}{\bibitem[SST95]{SST95} \emph{R. S. Simon, S. Spie\. z and H. Toru\'nczyk.}
T\lowercase{HE EXISTENCE OF EQUILIBRIA IN CERTAIN GAMES, SEPARATION FOR FAMILIES OF CONVEX FUNCTIONS
AND A THEOREM OF BORSUK-ULAM TYPE}, Israel J. Math 92 (1995) 1--21.}

\newcommand{\sstzt}{\bibitem[SST02]{SST02} \emph{R. S. Simon, S. Spie\. z and H. Toru\'nczyk.}
E\lowercase{QUILIBRIUM EXISTENCE AND TOPOLOGY IN SOME REPEATED GAMES WITH INCOMPLETE INFORMATION},
Trans. Amer. Math. Soc. 354:12 (2002) 5005-5026.}

\newcommand{\stez}{\bibitem[ST80]{ST80} * {\it H.~Seifert and W.~Threlfall.}
A textbook of topology, v~89 of {\em Pure and Applied Mathematics}.
Academic Press, New York-London, 1980.}

%Translated from the German edition of 1934 by Michael A. Goldman.

\newcommand{\stzs}{\bibitem[ST07]{ST07} * \emph{А. Скопенков и А. Телишев.}
И вновь о критерии Куратовского планарности графов, Мат. Просвещение, 11 (2007), 159--160.}

\newcommand{\stzse}{\bibitem[ST07]{ST07} * \emph{A. Skopenkov and A. Telishev}, Once again on the Kuratowski graph planarity criterion, Mat. Prosveschenie, 11 (2007), 159-160. arXiv:0802.3820.}

\newcommand{\stos}{\bibitem[ST17]{ST17} \emph{A. Skopenkov  and M. Tancer,}
Hardness of almost embedding simplicial complexes in $\R^d$, Discr. Comp. Geom., 61:2 (2019), 452--463. arXiv:1703.06305.}

\newcommand{\stno}{\bibitem[ST91]{ST91} \emph{S.~Spie\. z and H.~Toru\'nczyk}, Moving compacta in $\R^m$ apart,
Topol. Appl. 41 (1991), 193--204.}

\newcommand{\sttt}{\bibitem[St24]{St24} \emph{M. Starkov,} An example of an `unlinked' set of $2k+3$ points in $2k$-space, arXiv:2402.09002.}

\newcommand{\sunt}{\bibitem[Su]{Su} * \emph{Д. Судзуки.} Основы дзэн-буддизма. Наука дзэн --- ум дзэн. Киев: Преса Украiни. 1992.}
%http://zen-books.narod.ru/

\newcommand{\stwh}{\bibitem[SW]{SW} * \url{http://www.map.mpim-bonn.mpg.de/Stiefel-Whitney_characteristic_classes}}

\newcommand{\sz}{\bibitem[SZ05]{SZ} \emph{T. Sch\"oneborn and G. Ziegler}, The Topological Tverberg Theorem and Winding Numbers, J. Comb. Theory, Ser. A, 112:1 (2005) 82--104, arXiv:math/0409081.}

\newcommand{\szno}{\bibitem[Sz91]{Sz91} \emph{A.~Sz\"ucs,} On the cobordism groups of immersions and embeddings,
Math. Proc. Camb. Phil. Soc., 109 (1991) 343--349.}


\newcommand{\ta}{\bibitem[Ta]{Ta} * Handbook of Graph Drawing and Visualization. ed. by R. Tamassia, CRC Press, 2016.}

%\url{https://cs.brown.edu/~rt/gdhandbook/}.}

\newcommand{\tanf}{\bibitem[Ta95]{Ta95} \emph{K. Taniyama,} Homology classification of spatial embeddings of a graph, Topol. Appl. 65 (1995) 205--228.}

\newcommand{\tazz}{\bibitem[Ta00]{Ta00} \emph{K. Taniyama,} Higher dimensional links in a simplicial complex embedded in a sphere, Pacific Jour. of Math. 194:2 (2000), 465-467.}

\newcommand{\theo}{\bibitem[Th81]{Th81} * \emph{C.~Thomassen,} Kuratowski's theorem, J.~Graph. Theory 5 (1981), 225--242.}

\newcommand{\tooo}{\bibitem[To11]{To11} \emph{Tonkonog D.} Embedding 3-manifolds with boundary into closed 3-manifolds, Topol. Appl. 158 (2011), 1157-1162. arXiv:1003.3029.}

%http://arxiv.org/abs/1003.3029.

\newcommand{\tsbzf}{\bibitem[TSB]{TSB} \emph{D. M. Thilikos, M. Serna and H. L. Bodlaender},
Cutwidth I: A linear time fixed parameter algorithm, J. of Algorithms, 56:1 (2005), 1--24.}

%https://www.sciencedirect.com/science/article/pii/S0196677405000167

\newcommand{\tsbzfd}{\bibitem[TSB05']{TSB05'} \emph{D. M. Thilikos, M. Serna and H. L. Bodlaender},
Cutwidth II: , J. of Algorithms, 56:1 (2005), 25--49.}

%https://www.sciencedirect.com/science/article/pii/S0196677405000179

%c.tuffley@massey.ac.nz
%https://arxiv.org/pdf/1112.4558.pdf

\newcommand{\umse}{\bibitem[Um78]{Um78} \emph{B. Ummel.} The product of nonplanar complexes does not imbed in 4-space, Trans. Amer. Math. Soc., 242 (1978) 319--328.}

%\bibitem[Vd95]{Vd95} \emph{A. Vdovina.} Construction of orientable Wicks forms and estimation of their number, %Comm. Algebra 23 (1995), 3205-3233.

%\bibitem[Vd96]{Vd96} \emph{A. Vdovina.} On the number of nonorientable Wicks forms in a free group,
%Proc. Royal Soc. Edinb. Sect. A 126 (1996), 113-116.

%\bibitem[Vd97]{Vd97} \emph{A. Vdovina.} Product of commutators in free products,
%Internat. J. Algebra Comput. 7 (1997), 471-485.

\newcommand{\vant}{\bibitem[Va92]{Va92} * \emph{V.~A.~Vassiliev.} Complements of discriminants of smooth maps: Topology and applications, Amer. Math. Soc., Providence, RI, 1992 (рус. перевод: В. А. Васильев, Топология дополнений к дискриминантам, Фазис, Москва, 1997).}

\newcommand{\val}{\bibitem[Val]{Val} * \url{https://en.wikipedia.org/wiki/Valknut}}

%http://katlas.math.toronto.edu/wiki/File:Noeuds-3carres_en_translation.png

\newcommand{\vi}{\bibitem[Vi]{Vi} * \emph{O. Viro.}
Some integral calculus based on Euler characteristic, Lect. Notes in Math. 1346.}

\newcommand{\vizt}{\bibitem[Vi02]{Vi02} * \emph{Э. Б. Винберг.} Курс алгебры. Москва. Факториал Пресс. 2002.}

\newcommand{\vizteng}{\bibitem[Vi02]{Vi02} * \emph{E. B. Vinberg.} A Course in Algebra. Graduate Studies in Mathematics, vol. 56. 2003.}

\newcommand{\vinhzs}{\bibitem[VINH07]{VINH07} * \emph{О. Я. Виро, О. А. Иванов, Н. Ю. Нецветаев и В. М. Харламов.}
Элементарная топология, МЦНМО. 2007.}

\newcommand{\vktt}{\bibitem[vK32]{vK32} \emph{E.~R.~van~Kampen}, Komplexe in euklidischen R\"aumen, Abh. Math. Sem. Hamburg, 9 (1933) 72--78; Berichtigung dazu, 152--153.}
%English translation by Tu T$\hat a$m Ngu$\tilde{\hat e}$n-Phan:
%\url{https://sites.google.com/site/tutamnguyenphan/van_Kampen.pdf}

\newcommand{\kafo}{\bibitem[vK41]{vK41} \emph{E. R. van Kampen,} Remark on the address of S. S. Cairns,
in Lectures in Topology, 311--313, University of Michigan Press, Ann Arbor, MI, 1941.}

\newcommand{\vo}{\bibitem[Vo96]{vo96} \emph{A. Yu. Volovikov,} On a topological generalization of the Tverberg theorem. Math. Notes 59:3 (1996), 324--326.}

\newcommand{\vopns}{\bibitem[Vo96v]{Vo96v} \emph{A. Yu. Volovikov,} On the van Kampen-Flores Theorem.
Math. Notes 59:5 (1996), 477--481.}

%<alvostef@mail.ru>

\newcommand{\vznt}{\bibitem[VZ93]{VZ93} \emph{A. Vu\v ci\'c and R. T. \v Zivaljevi\'c}, Note on a conjecture of Sierksma, Discr. Comput. Geom. 9 (1993), 339-349.}

\newcommand{\vzzn}{\bibitem[VZ09]{VZ09} \emph{S. T. Vre\'cica and R. T. \v Zivaljevi\'c},  Chessboard complexes
indomitable, J. of Comb. Theory, Ser. A 118:7 (2011), 2157--2166. arXiv:0911.3512.}


\newcommand{\walst}{\bibitem[Wa62]{Wa62} \emph{C.~T.~C.~Wall}, Classification of $(n-1)$-connected $2n$-manifolds, Ann. of Math., 75 (1962) 163--189.}

%https://www.maths.ed.ac.uk/~v1ranick/papers/n-1con2n.pdf

\newcommand{\wallss}{\bibitem[Wa67]{Wa67} \emph{C.~T.~C.~Wall.} Classification problems in differential topology, IV, Thickenings, Topology 1966. 5. P. 73--94.}

\newcommand{\waldss}{\bibitem[Wa67m]{Wa67m} \emph{F. Waldhausen.} Eine Klasse von 3-dimensional Mannigfaltigkeiten, I. Invent. Math. 1967. 3. P.~308-333.}

\newcommand{\walsz}{\bibitem[Wa70]{Wa70} \emph{C. T. C. Wall,} Surgery on compact manifolds,
1970, Academic Press, London.}

\newcommand{\wess}{\bibitem[We67]{We67} \emph{C.~Weber.} Plongements de poly\`edres dans le domain metastable, Comment. Math. Helv. 42 (1967), 1--27.}

\newcommand{\whit}{\bibitem[Wl]{Wl} * \url{https://en.wikipedia.org/wiki/Whitehead_link}}

\newcommand{\winum}{\bibitem[Wn]{Wn} * \url{https://en.wikipedia.org/wiki/Winding_number}}

\newcommand{\wrss}{\bibitem[Wr77]{Wr77} \emph{P. Wright.} Covering 2-dimensional polyhedra by 3-manifolds spines.
Topology. 16 (1977), 435--439.}

\newcommand{\wufe}{\bibitem[Wu58]{Wu58} \emph{W. T. Wu.} On the realization of complexes in a euclidean space (in Chinese): I, Sci Sinica, 7 (1958) 251--297; II, Sci Sinica, 7 (1958) 365--387; III, Sci Sinica, 8 (1959) 133--150.}

\newcommand{\wufn}{\bibitem[Wu59]{Wu59} \emph{W.~T.~Wu.} On the isotopy of a finite complex in Euclidean space, I, II, Science Record, N.S. 3:8 (1959) 342--347, 348--351.}

\newcommand{\wusf}{\bibitem[Wu65]{Wu65} * \emph{W. T. Wu.} A Theory of Embedding, Immersion and Isotopy of Polytopes in an Euclidean Space. Peking: Science Press, 1965.}

% Peking - OK? Da. A. S.

\newcommand{\yann}{\bibitem[Ya99]{Ya99} \emph{Z. Yang.} Computing Equilibria and Fixed Points: The Solution of Nonlinear Inequalities, Kluwer, Springer Science + Business Media, 1990.}


\newcommand{\zesz}{\bibitem[Ze60]{Ze60} \emph{E. C. Zeeman}, Unknotting spheres in five dimensions, Bull. Amer. Math. Soc. 66 (1960) 198.
\linebreak
\url{https://www.ams.org/journals/bull/1960-66-03/S0002-9904-1960-10431-4/S0002-9904-1960-10431-4.pdf}}

\newcommand{\z}{\bibitem[Ze]{Z} * \emph{E. C. Zeeman}, A Brief History of Topology, UC Berkeley, October 27, 1993, On the occasion of Moe Hirsch's 60th birthday, \url{http://zakuski.utsa.edu/~gokhman/ecz/hirsch60.pdf}.}

\newcommand{\zioz}{\bibitem[Zi10]{Zi10} * \emph{D. \v Zivaljevi\'c}, Borromean and Brunnian Rings, \url{http://www.rade-zivaljevic.appspot.com/borromean.html}.}

\newcommand{\zioo}{\bibitem[Zi11]{Zi11} * \emph{G. M. Ziegler}, 3N Colored Points in a Plane, Notices of the Amer. Math. Soc., 58:4 (2011), 550-557.}

%\url{http://www.ams.org/notices/201104/rtx110400550p.pdf}.}

\newcommand{\zot}{\bibitem[Zi13]{Z13} \emph{A. Zimin.} Alternative proofs of the Conway-Gordon-Sachs Theorems, arXiv:1311.2882.}

\newcommand{\zss}{\bibitem[ZSS]{ZSS} * Элементы математики в задачах: через олимпиады и кружки к профессии. Сборник под редакцией А. Заславского, А. Скопенкова и М. Скопенкова. М.: МЦНМО, 2018.
Обновляемая версия части книги: \url{http://www.mccme.ru/circles/oim/materials/sturm.pdf}.}

\newcommand{\zu}{\bibitem[Zu]{Zu} \emph{J. Zung.} A non-general-position Parity Lemma,
\url{http://www.turgor.ru/lktg/2013/1/parity.pdf}.}

%There is an improved unpublished version.
 

\abms  
\adnsv
 
%\bibitem[BF92]{BF92} {\it Babai L., Frankl P.} Linear algebra methods in combinatorics, Part 1. Department of Computer Science, The University of Chicago, 1992.

\biet

\bibitem[CPG]{CPG} \href{https://ru.wikipedia.org/wiki/Прямое_произведение_графов}{https://ru.wikipedia.org/wiki/Прямое\_произведение\_графов}

\botf
\ffen

\bibitem[Fl]{Fl} \href{https://en.wikipedia.org/wiki/Flow_network}{https://en.wikipedia.org/wiki/Flow\_network}

\fhzo
\gdikrs
\hasf
\hcon
 
%\bibitem[Ma10]{Ma10} {\it J. Matou\v sek.} Thirty-three Miniatures: Mathematical and Algorithmic Applications of Linear Algebra, Amer. Math. Soc., 2010

%\bibitem[MMKS]{MMKS} * \url{https://old.mccme.ru//circles//oim/mmks/pravila_mmks.htm#zaver}

\msos

%\bibitem[Ra15]{Ra15} {\it Райгородский А.М.} Линейно-алгебраический метод в комбинаторике. М.: МЦНМО, 2015.

\sano
\shen
\skze
%\sksr
%\skoe
\skoer
%\skotz
\skotzr
\skd
%\skde
\sstt
%\sunt
\vant
\zss

\end{thebibliography}
\end{document}